\newcommand\CA{{\mathcal A}} 
\newcommand\CB{{\mathcal B}}
\newcommand\CC{{\mathcal C}} 
\newcommand\CD{{\mathcal D}}
\newcommand\CIF{{\mathcal {IF}}} 
\newcommand\CIFAC{{\mathcal {IF\!AC}}}
\newcommand\R{{\varrho}}
\newcommand\BBK{{\mathbb K}}
\newcommand\BBR{{\mathbb R}}
\newcommand {\Sage}{\textsf{SAGE}}  
\newcommand\Der{{\operatorname{Der}}}
\newcommand\Poin{{\operatorname{Poin}}}
\newcommand\pdeg{\operatorname{pdeg}}
\renewcommand\th{{^{\text{th}}}}
\numberwithin{equation}{section}
\theoremstyle{plain}
\newtheorem{lemma}[equation]{Lemma}
\newtheorem{theorem}[equation]{Theorem}
\newtheorem{corollary}[equation]{Corollary}
\newtheorem{proposition}[equation]{Proposition}
\theoremstyle{definition}
\newtheorem{defn}[equation]{Definition}
\newtheorem{remark}[equation]{Remark}
\thanks{We acknowledge 
support from the DFG-priority program 
SPP1489 ``Algorithmic and Experimental Methods in
Algebra, Geometry, and Number Theory''.}
\subjclass[2010]{Primary 20F55, 52B30; Secondary 52C35, 14N20}
\begin{document}

%%%%%%%%%%%%%%%%%%%%%%%%%%%%%%%%%%%%%%%%%%%%%%%%%%%%%%%%%%%%%%%%%%%%%%
%%%%%%%%%%%%% top matter stuff
%%%%%%%%%%%%%%%%%%%%%%%%%%%%%%%%%%%%%%%%%%%%%%%%%%%%%%%%%%%%%%%%%%%%%%
\title[Nice restrictions of reflection arrangements]
{Nice restrictions of reflection arrangements}

\author[T. M\"oller]{Tilman M\"oller}
\address
{Fakult\"at f\"ur Mathematik,
Ruhr-Universit\"at Bochum,
D-44780 Bochum, Germany}
\email{tilman.moeller@rub.de}

\author[G. R\"ohrle]{Gerhard R\"ohrle}
\email{gerhard.roehrle@rub.de}

\keywords{
hyperplane arrangements, 
complex reflection groups, restricted arrangements, 
nice arrangement,
inductively factored arrangement}

\allowdisplaybreaks

\begin{abstract}
In \cite{hogeroehrle:nice},
Hoge and the second author classified
all nice and all inductively factored
reflection arrangements.
In this note we extend this classification 
by determining all  
nice and all inductively factored
restrictions of 
reflection arrangements.
\end{abstract}

\maketitle

%%%%%%%%%%%%%%%%%%%%%%%%%%%%%%%%%%%%%%%%%%%%%%%%%%%%%%%%%%%%%%%%%%%%%%
%%%%%%%%%%%%% article body...
%%%%%%%%%%%%%%%%%%%%%%%%%%%%%%%%%%%%%%%%%%%%%%%%%%%%%%%%%%%%%%%%%%%%%%

%%%%%%%%%%%%%%%%%%%%%%%%%%%%%%%%%%%%%%%%%%%%%%%%%%%%%%%%%%%%%%%%%%%%%%
%%%%%%%%%%%%% \S1 Introduction
%%%%%%%%%%%%%%%%%%%%%%%%%%%%%%%%%%%%%%%%%%%%%%%%%%%%%%%%%%%%%%%%%%%%%%
\section{Introduction}

The notion of a nice arrangement is due to Terao \cite{terao:factored}.
This class generalizes the class of supersolvable arrangements, 
\cite{orliksolomonterao:hyperplanes}
(cf.\ \cite[Thm.\ 3.81]{orlikterao:arrangements}).
There is an inductive version of this notion, 
so called inductively factored arrangements,
see Definition \ref{def:indfactored}.
This inductive class (properly) contains the 
class of supersolvable arrangements and 
is (properly) contained in the class of 
inductively free arrangements, see
\cite[Rem.\ 3.33]{hogeroehrle:factored}.

For an overview on properties of nice 
and inductively factored arrangements, 
and for their connection with the 
Orlik-Solomon algebra, 
see \cite[\S 3]{orlikterao:arrangements}, 
\cite{jambuparis:factored}, 
and \cite{hogeroehrle:factored}.
In \cite{hogeroehrle:factored}, 
Hoge and the second author proved an 
addition-deletion theorem for nice arrangements, 
see Theorem \ref{thm:add-del-factored} below. 
This is an analogue of Terao's celebrated
addition-deletion theorem \ref{thm:add-del} for 
free arrangements for the class of 
nice arrangements.

In \cite{hogeroehrle:nice},
Hoge and the second author classified
all nice and all inductively factored
reflection arrangements.
Extending this earlier work, in
this note we classify all 
nice and all inductively factored restrictions 
$\CA^X$, for $\CA$ a reflection arrangement and $X$ in
the intersection lattice $L(\CA)$ of $\CA$, see Theorems 
\ref{thm:nice} and \ref{thm:indfac}. 
If $\CA^X$ is inductively factored for every $X \in L(\CA)$, 
then $\CA$ is called hereditarily inductively factored, see Definition
\ref{def:heredindfactored}.

In order to state our main results, we need 
a bit more notation: For fixed $r, \ell \geq 2$ 
and $0 \leq k \leq \ell$ we denote by
$\CA^k_\ell(r)$ the intermediate arrangements,
defined by Orlik and Solomon
in \cite[\S 2]{orliksolomon:unitaryreflectiongroups}
(see also \cite[\S 6.4]{orlikterao:arrangements}), that 
interpolate between the reflection arrangements $\CA(G(r,r,\ell)) =
\CA^0_\ell(r)$ and $\CA(G(r,1,\ell)) = \CA^\ell_\ell(r)$, 
of the monomial groups $G(r,r,\ell)$ and $G(r,1,\ell)$, respectively. 
The arrangements $\CA^k_\ell(r)$ are relevant for us, as they occur as 
restrictions of $\CA(G(r,r,\ell))$, 
\cite[Prop.\ 2.14]{orliksolomon:unitaryreflectiongroups}
(cf.\ \cite[Prop.~6.84]{orlikterao:arrangements}).
For $k \neq 0, \ell$, these are not reflection arrangements
themselves. See Section \ref{sec:akl} for further details.

Suppose that $W$ is a finite, unitary
reflection group acting on the complex 
vector space $V$.
Let $\CA(W) = (\CA(W),V)$ be the associated 
hyperplane arrangement of $W$.
We refer to $\CA(W)$ as a 
\emph{reflection arrangement}.
Thanks to Proposition \ref{prop:product-factored},
the question whether $\CA$ is
nice reduces to the case when $\CA$ is irreducible.
Therefore, we may assume that $W$ is irreducible.
First we recall the classification results from 
\cite{hogeroehrle:nice}

\begin{theorem}
[{\cite[Thm.\ 1.3, Thm.\ 1.5]{hogeroehrle:nice}}]
\label{thm:factoredrefl}
Let $W$ be a finite, irreducible, 
complex reflection group with reflection arrangement 
$\CA(W)$.
Then we have the following:
\begin{itemize}
\item[(i)]
$\CA(W)$ is nice if and only if 
either 
$\CA(W)$ is supersolvable or 
$W =  G(r,r,3)$ for $r \ge 3$.
\item[(ii)]
$\CA(W)$ is factored
if and only if $\CA(W)$ is hereditarily factored.
\end{itemize}
\end{theorem}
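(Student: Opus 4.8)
The assertion is a classification along the Shephard--Todd list of finite irreducible complex reflection groups, so the proof is necessarily a case analysis; the plan is to organise it around part~(i) and to read off~(ii) at the end. By Proposition~\ref{prop:product-factored} one reduces at once to $W$ irreducible, and since every arrangement of rank at most~$2$ is supersolvable, hence nice, one may assume $\ell := \rank \CA(W) \ge 3$. In the ``if'' direction of~(i) the implication \emph{supersolvable $\Rightarrow$ nice} is classical (supersolvable $\Rightarrow$ inductively factored $\Rightarrow$ nice; cf.\ the references in the introduction), so the only outstanding family is $W = G(r,r,3)$ with $r \ge 3$. Here I would work with the concrete model of $\CA(G(r,r,3))$ in $\BBC^3$ given by the hyperplanes $x_i - \zeta x_j = 0$ ($1 \le i < j \le 3$, $\zeta^r = 1$): it is free with exponents $1$, $r+1$, $2r-2$, and the plan is to exhibit a $3$-block partition $\pi$ with exactly these block sizes and to verify Terao's defining conditions for a nice partition at every flat of $L(\CA)$. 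For a rank-$3$ arrangement these reduce to the independence of $\pi$ together with the requirement, at each rank-$2$ flat $X$ (there are $r^2$ of them carrying three hyperplanes and three carrying $r$ hyperplanes), that the induced partition $\pi_X$ consist of exactly two blocks admitting a linearly independent transversal.

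The substance of the theorem is the ``only if'' direction of~(i), and the key tool is that niceness \emph{localises}: a nice partition $\pi$ of an arrangement $\CA$ induces a nice partition $\pi_X$ of the localisation $\CA_X$ for every $X \in L(\CA)$ (immediate from Terao's definition of a nice partition). Since for a complex reflection group $W$ one has $\CA(W)_X = \CA(W_X)$ for the parabolic subgroup $W_X$, and $\CA(W_X)$ is the product of the reflection arrangements of the irreducible summands of $W_X$, Proposition~\ref{prop:product-factored} shows that niceness of $\CA(W)$ forces niceness of the reflection arrangement of \emph{every} irreducible parabolic of $W$. I would use this to reduce the infinitely many cases to finitely many. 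Within the monomial family $\CA(G(r,p,\ell)) = \CA(G(r,1,\ell))$ whenever $p < r$ (the coordinate hyperplanes $x_i = 0$ lie in the arrangement precisely when $p < r$), and $\CA(G(r,1,\ell))$ is supersolvable; so the only monomial reflection arrangements still to be excluded are the $\CA(G(r,r,\ell))$ with $\ell \ge 4$, and since $G(r,r,\ell)$ contains $G(r,r,\ell-1)$ as a parabolic, induction on $\ell$ (the family includes $D_\ell = G(2,2,\ell)$) reduces these to the base cases $G(r,r,4)$, $r \ge 2$. Likewise $E_6, E_7, E_8 \supset D_4$, $H_4 \supset H_3$, and every exceptional group of rank $\ge 5$ contains a non-supersolvable parabolic of rank $3$ or~$4$; so it remains only to show that the reflection arrangements of $G_{23} = H_3$, $G_{24}$, $G_{25}$, $G_{26}$, $G_{27}$, of $G(r,r,4)$ for $r \ge 2$ (in particular $D_4$), of $F_4 = G_{28}$, and of a few further rank-$4$ exceptional groups, are not nice.

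This base-case analysis is where I expect the real work --- and the main obstacle --- to lie: because a reflection arrangement is always free, $\Poin(\CA(W), t)$ factors into positive linear factors over $\BBZ$, so there is no numerical obstruction and one must argue directly with the flat structure. For the real and exceptional base cases I would translate the conditions at the rank-$2$ flats --- each rank-$2$ flat carrying three hyperplanes forces a $(2,1)$-split of them between two blocks, each carrying exactly two forces those into distinct blocks --- into a finite constraint-satisfaction problem over the (finitely many) admissible block multisets and show that it has no solution, most safely by an exhaustive search with \GAP/\CHEVIE. The genuinely awkward case is $G(r,r,4)$, where this must be carried out \emph{uniformly in~$r$}; here I would exploit the very regular incidence structure of $\CA(G(r,r,4))$, in particular the few possible types of rank-$2$ flat, to rule out each admissible block multiset by hand. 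Together with the localisation reduction above, this completes the ``only if'' direction of~(i).

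Finally, for~(ii) the implication ``hereditarily factored $\Rightarrow$ factored'' is immediate on taking $X = V$. For the converse I would, using~(i), list the restrictions of each nice reflection arrangement --- those of supersolvable type ($A_\ell$, $B_\ell$, $G(r,1,\ell)$, or rank $\le 2$) and $\CA(G(r,r,3))$ for $r \ge 3$. By the Orlik--Solomon description of restrictions of reflection arrangements \cite{orliksolomon:unitaryreflectiongroups}, every restriction of $\CA(A_\ell)$ is a product of braid arrangements and every restriction of $\CA(G(r,1,\ell))$ is a product of arrangements $\CA(G(r,1,m))$ and braid arrangements, all of which are supersolvable and hence nice; moreover every proper restriction of a rank-$2$ arrangement, or of $\CA(G(r,r,3))$, has rank at most~$2$ --- such restrictions occur among the intermediate arrangements $\CA^k_m(r)$ with $m \le 2$ --- and is therefore nice. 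Hence every restriction of a nice reflection arrangement is nice, which is precisely~(ii).
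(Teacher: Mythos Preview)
The paper does not prove Theorem~\ref{thm:factoredrefl}: it is quoted wholesale from \cite{hogeroehrle:nice} as background for the present classification (Theorems~\ref{thm:nice}--\ref{thm:hered-indfac}), and no argument for it is supplied here. There is thus no ``paper's own proof'' to compare your attempt against.

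That said, your outline is the natural one and almost certainly matches the structure of the proof in \cite{hogeroehrle:nice}: reduce via Remark~\ref{rem:factored} (niceness localises) and $\CA(W)_X = \CA(W_X)$ to irreducible parabolics, isolate a finite list of minimal obstructions (the non-supersolvable rank-$3$ and rank-$4$ exceptionals together with $G(r,r,4)$ for all $r \ge 2$), and treat those directly. Two caveats. First, what you have written is a plan, not a proof: you do not exhibit a nice partition of $\CA(G(r,r,3))$, you do not carry out the uniform-in-$r$ argument that $\CA(G(r,r,4))$ is not nice (which you correctly flag as the crux; compare the very similar combinatorics in Lemma~\ref{lem:al-3} for $\CA^1_4(r)$), and you defer the exceptional base cases to an unperformed computer search. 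Second, your argument for part~(ii) tacitly assumes that the supersolvable irreducible reflection arrangements are exactly those of rank $\le 2$ and the $\CA(G(r,1,\ell))$; this is true, but it is itself a classification result (\cite{hogeroehrle:super}) that you should cite rather than leave implicit. With those cited inputs and the base cases actually worked out, your strategy would yield a complete proof.
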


Thanks to Proposition \ref{prop:product-indfactored},
the question whether $\CA$ is
inductively factored reduces to the case when $\CA$ is irreducible.

\begin{theorem}
[{\cite[Cor.\ 1.4, Cor.\ 1.6]{hogeroehrle:nice}}]
\label{thm:indfactoredrefl}
Let $W$ be a finite, irreducible, 
complex reflection group with reflection arrangement 
$\CA(W)$.
Then we have the following:
\begin{itemize}
\item[(i)]
$\CA(W)$ 
is inductively factored if and only if it is supersolvable. 
\item[(ii)]
$\CA(W)$ is inductively  factored
if and only if $\CA(W)$ is hereditarily inductively factored.
\end{itemize}
\end{theorem}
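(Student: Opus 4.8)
The plan is to reduce everything to the classification of nice reflection arrangements, Theorem~\ref{thm:factoredrefl}(i), together with two standard features of the class of inductively factored arrangements recalled in the introduction: every supersolvable arrangement is inductively factored, and every inductively factored arrangement is nice.

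For part~(i), the implication ``supersolvable $\Rightarrow$ inductively factored'' is the first of these features. For the converse, suppose $\CA(W)$ is inductively factored. Then $\CA(W)$ is nice, so Theorem~\ref{thm:factoredrefl}(i) leaves only two possibilities: either $\CA(W)$ is supersolvable, and we are done, or $W = G(r,r,3)$ with $r \geq 3$. Hence the whole statement hinges on showing that $\CA := \CA(G(r,r,3))$ is \emph{not} inductively factored when $r \geq 3$. For this I would exploit the addition--deletion step built into Definition~\ref{def:indfactored} (cf.\ Theorem~\ref{thm:add-del-factored}): if $\CA$ were inductively factored, there would be a hyperplane $H \in \CA$ with $\CA^H$ nice; since $\CA^H$ has rank $2$ it is free, and the block sizes of a nice partition of a free arrangement are its exponents, so $\exp(\CA^H)$ would be a sub-multiset of $\exp(\CA)$. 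Now $G(r,r,3)$ acts transitively on the $3r$ hyperplanes of $\CA$, so all restrictions $\CA^H$ are isomorphic; a short direct computation (say for $H = \ker(x_1 - x_2)$) identifies $\CA^H$ with an arrangement of $r+1$ distinct lines through the origin in $\BBC^2$, whence $\exp(\CA^H) = \{1, r\}$. On the other hand $\exp(\CA(G(r,r,3))) = \{1,\, r+1,\, 2r-2\}$. Since $r \notin \{1, r+1, 2r-2\}$ for $r \geq 3$, the multiset $\{1, r\}$ is not contained in $\exp(\CA)$ --- a contradiction. This settles part~(i).

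For part~(ii), one implication is immediate: $V \in L(\CA(W))$ and $\CA(W)^V = \CA(W)$, so ``hereditarily inductively factored'' (Definition~\ref{def:heredindfactored}) implies ``inductively factored''. For the converse, assume $\CA(W)$ is inductively factored; by part~(i) it is supersolvable. Since $L(\CA(W)^X)$ is the upper interval $[X, \hat 1]$ of the supersolvable geometric lattice $L(\CA(W))$, and upper intervals of supersolvable geometric lattices are again supersolvable, every restriction $\CA(W)^X$ is supersolvable, hence inductively factored; thus $\CA(W)$ is hereditarily inductively factored.

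I expect the only genuine work to lie in the $G(r,r,3)$ case of part~(i): one must pin down $\exp(\CA(G(r,r,3)))$ and the isomorphism type of the hyperplane restrictions precisely enough to see the numerical clash, and --- more delicately --- one must be sure that ``inductively factored'' really forces the existence of a hyperplane $H$ with $\exp(\CA^H) \subseteq \exp(\CA)$, i.e.\ that the topmost addition--deletion step of an inductive factorization is constrained exactly as used above. The remaining ingredients --- that restrictions of supersolvable arrangements are supersolvable, and the inclusions among the supersolvable, inductively factored, and nice classes --- are standard.
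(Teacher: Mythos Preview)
The paper does not include a proof of Theorem~\ref{thm:indfactoredrefl}; it is simply quoted from \cite{hogeroehrle:nice} as background for the classification that follows. So there is no argument in the present paper to compare your proposal against.

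That said, your argument is correct and self-contained relative to the tools the paper records. The point you flag as delicate in part~(i) is unproblematic: if $(\CA,\pi)\in\CIFAC$ with $\CA\neq\Phi_\ell$, Definition~\ref{def:indfactored} furnishes $H_0\in\pi_1$ with $\R:\CA\setminus\pi_1\to\CA''$ bijective and $(\CA'',\pi'')\in\CIFAC$; bijectivity forces $|\pi_i''|=|\pi_i|$ for $i\ge 2$, and since both $\CA$ and $\CA''$ are free, Remark~\ref{rem:exponents} gives $\exp\CA''=\{|\pi_2|,\dots,|\pi_r|\}\subset\exp\CA$. (Equivalently, invoke Proposition~\ref{prop:indfactoredindfree} and then Theorem~\ref{thm:add-del}.) Your identification of the hyperplane restriction of $\CA(G(r,r,3))$ with $\CA_2^1(r)$, the resulting exponent set $\{1,r\}$, and the transitivity of $G(r,r,3)$ on its $3r$ hyperplanes are all straightforward to verify, and the numerical clash with $\exp\CA=\{1,r+1,2r-2\}$ for $r\ge 3$ is exactly as you say.

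For part~(ii), the assertion that upper intervals of supersolvable geometric lattices are again supersolvable is indeed classical (Stanley \cite{stanley:super}); within the framework of this paper you could alternatively just appeal to Theorem~\ref{thm:super-restriction}(i), which already records that restrictions of supersolvable reflection arrangements remain supersolvable. Everything else is routine.
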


Terao  \cite{terao:factored} showed that 
every supersolvable arrangement is factored.
Indeed, every supersolvable arrangement is inductively factored,
see Proposition \ref{prop:superindfactored}.
Moreover, Jambu and Paris showed that each inductively factored 
arrangement is inductively free, see Proposition \ref{prop:indfactoredindfree}.
Each of these classes of arrangements is properly contained in the other,
see \cite[Rem.\ 3.33]{hogeroehrle:factored}.

In view of these proper containments,
we first recall the classifications of 
the inductively free and 
the supersolvable restrictions of reflection arrangements, 
from 
\cite{amendhogeroehrle:indfree}
and \cite{amendhogeroehrle:super}, 
respectively, as they give an indication 
of the kind of results to be expected.
Here and later on we use the classification and 
labeling of the irreducible 
unitary reflection groups due to
Shephard and Todd, \cite{shephardtodd}. 

\begin{theorem}
[{\cite[Thm.\ 1.2]{amendhogeroehrle:indfree}}]
\label{thm:indfree-restriction}
Let $W$ be a finite, irreducible, complex 
reflection group with reflection arrangement 
$\CA = \CA(W)$ and let $X \in L(\CA)$. 
The restricted arrangement $\CA^X$ is inductively free 
if and only if one of the following holds:
\begin{itemize}
\item[(i)] 
$\CA$ is inductively free;
\item[(ii)] 
$W = G(r,r,\ell)$ and 
$\CA^X \cong \CA^k_p(r)$, where $p = \dim X$ and $p - 2 \leq k \leq p$; 
\item[(iii)] 
$W$ is one of $G_{24}, G_{27}, G_{29}, G_{31}, G_{33}$, or $G_{34}$ and $X \in L(\CA) \setminus \{V\}$ with  $\dim X \leq 3$.
\end{itemize}
\end{theorem}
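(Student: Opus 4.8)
The plan is to run through the Shephard--Todd list of irreducible $W$, to invoke the known description of all restrictions $\CA(W)^X$ due to Orlik--Solomon and Orlik--Terao, and, for each isomorphism type of restricted arrangement that arises, to decide inductive freeness by means of Terao's addition-deletion theorem \ref{thm:add-del}, applied recursively. The reduction to irreducible $W$ is harmless: $L(\CA_1 \times \CA_2) \cong L(\CA_1) \times L(\CA_2)$ with $(\CA_1 \times \CA_2)^{(Y_1,Y_2)} = \CA_1^{Y_1} \times \CA_2^{Y_2}$, and a product of arrangements is inductively free exactly when each factor is.

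For the infinite families the argument is uniform. If $W = S_\ell$, or $W = G(r,1,\ell)$, or $W = G(r,p,\ell)$ with $1 \le p < r$ (so that $\CA(W) = \CA(G(r,1,\ell))$ as an arrangement), then $\CA(W)$ is supersolvable, and since supersolvability is inherited by every restriction, each $\CA^X$ is supersolvable, hence inductively free; these cases land in alternative (i). The remaining family is $W = G(r,r,\ell)$. Here one uses Orlik--Solomon's identification of the restrictions of $\CA^0_\ell(r) = \CA(G(r,r,\ell))$ with intermediate arrangements, $\CA^X \cong \CA^k_p(r)$ for $p = \dim X$ and a suitable $k$ with $0 \le k \le p \le \ell$, and compares this list with the classification of the inductively free intermediate arrangements, according to which $\CA^k_p(r)$ is inductively free precisely when $p - 2 \le k \le p$. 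When $\ell = 2$ or $r = 2$ the arrangement $\CA(W)$ is itself inductively free and alternative (i) applies; otherwise the comparison yields exactly alternative (ii).

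For the $34$ exceptional groups the verification is finite but substantial, and is carried out with a computer algebra system. For each $W$ one lists the $W$-orbits of flats in $L(\CA)$ and writes down a defining polynomial for each restriction (equivalently, reads it from the Orlik--Terao tables). If $\CA(W)$ is inductively free --- among the exceptionals this means $W$ is none of $G_{24}, G_{27}, G_{29}, G_{31}, G_{33}, G_{34}$ (Hoge--R\"ohrle) --- one verifies that every $\CA^X$ admits an inductive chain: this is automatic in ranks $\le 2$, holds by Barakat--Cuntz for the (products of) Coxeter-type arrangements occurring as restrictions of the Weyl groups $F_4, E_6, E_7, E_8$, and is a short direct computation in the few remaining low-rank cases for $H_3, H_4, G_{25}, G_{26}, G_{32}$; all these restrictions fall under (i). If $W$ is one of the six groups just listed, one shows that $\CA^X$ is inductively free whenever $\dim X \le 3$ --- rank $\le 2$ being automatic and rank $3$ a brief explicit check --- and that inductive freeness fails whenever $\dim X \ge 4$, the case $X = V$ being precisely the assertion that these six reflection arrangements are not inductively free. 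This produces alternative (iii). Assembling the three families of cases, together with the converse directions recorded along the way, completes the proof.

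The main obstacle is this last, exceptional case, and within it the negative statements for flats of rank $\ge 4$ in $G_{31}, G_{33}, G_{34}$, together with the sheer size of the intersection lattices of $E_7$, $E_8$ and $G_{34}$. Establishing that a given restriction is \emph{not} inductively free is the delicate point: one cannot merely attempt the recursive construction and report failure, but must argue that no inductive chain exists at all. For this I would lean on the structural obstructions used by Hoge--R\"ohrle for the six ambient reflection arrangements, together with the observation that every offending restriction of rank $\ge 4$ already contains one of these non-inductively-free configurations, so that an inductive chain for it would descend to one for the smaller bad arrangement --- a contradiction. On the positive side, the bulk of the computational effort is the bookkeeping for $E_8$: cataloguing the orbits of flats and exhibiting an addition-deletion chain for each of the resulting restrictions.
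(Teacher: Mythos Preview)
The paper does not prove this theorem at all: it is Theorem~\ref{thm:indfree-restriction}, stated in the introduction and explicitly cited from \cite[Thm.~1.2]{amendhogeroehrle:indfree} as a known background result used to frame the present work. There is therefore no proof in this paper against which to compare your attempt.

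That said, your sketch is a plausible outline of how the cited paper proceeds, but two points would need tightening before it could stand on its own. First, the assertion that ``supersolvability is inherited by every restriction'' is not a general theorem about arrangements; it happens to hold for the supersolvable reflection arrangements because their restrictions are again (products of) supersolvable reflection arrangements, but you would need to invoke the concrete Orlik--Solomon description of those restrictions rather than appeal to a nonexistent general principle. Second, your negative argument for $\dim X \ge 4$ in the six bad exceptional groups is too loose: merely \emph{containing} a non-inductively-free configuration does not obstruct inductive freeness, and an inductive chain for $\CA^X$ does not in general ``descend'' to one for an arbitrary subarrangement. The correct mechanism is localization: inductive freeness passes to localizations (cf.\ \cite{hogeroehrleschauenburg:localizations}), so exhibiting a non-inductively-free localization of $\CA^X$ is what rules it out. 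You gesture toward this, but the wording as written would not survive scrutiny.
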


Note that 
Proposition \ref{prop:superindfactored}
and Theorem \ref{thm:indfree-restriction}(iii)
imply that for $W$  an irreducible, complex 
reflection group of exceptional type,
for $W$ as in Theorem \ref{thm:indfree-restriction}(iii), for
$\CA = \CA(W)$ and $X \in L(\CA)$ 
with $\dim X \ge 4$, the restricted arrangement $\CA^X$ is 
not inductively factored. 

\begin{theorem}
[{\cite[Thm.\ 1.3]{amendhogeroehrle:super}}]
\label{thm:super-restriction}
Let $W$ be a finite, irreducible, complex 
reflection group with reflection arrangement 
$\CA = \CA(W)$ and let $X \in L(\CA)$ with $\dim X \ge 3$.
Then the restricted arrangement 
$\CA^X$ is supersolvable
if and only if 
one of the following holds: 
\begin{itemize}
\item[(i)] $\CA$ is supersolvable;
\item[(ii)] $W = G(r,r,\ell)$ and 
$\CA^X \cong \CA_p^p(r)$ or $\CA_p^{p-1}(r)$,
where $p = \dim X$; 
\item[(iii)] $\CA^X$ is $(E_6,A_3)$, $(E_7, D_4)$, $(E_7, A_2^2)$, or $(E_8, A_5)$.
\end{itemize}
\end{theorem}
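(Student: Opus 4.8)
The "final statement" in the excerpt is Theorem \ref{thm:super-restriction}, which is quoted from \cite{amendhogeroehrle:super}; since it is an external citation, I will present how one would prove it as if it were to be proved here.

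\medskip

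The plan is to prove both directions by combining the classification of the intersection lattices of reflection arrangements with the known characterization of supersolvable arrangements as those admitting a maximal chain of modular elements (equivalently, via the existence of a modular coatom, \cite[Thm.\ 3.81, Prop.\ 2.67]{orlikterao:arrangements}). For the ``if'' direction one checks directly that each of the three families produces a supersolvable restriction: case (i) is immediate since a restriction $\CA^X$ of a supersolvable $\CA$ is again supersolvable (supersolvability is inherited by localizations and by restrictions to modular elements, and more generally $L(\CA^X)$ is an interval in $L(\CA)$); for case (ii) one uses the explicit structure of the intermediate arrangements $\CA^k_p(r)$ from Section \ref{sec:akl}, exhibiting a modular coatom in $\CA_p^p(r)=\CA(G(r,1,p))$ and in $\CA_p^{p-1}(r)$ and then inducting on $p$; for case (iii) one verifies supersolvability of the four named rank-$3$, rank-$4$, rank-$4$, rank-$5$ restrictions by explicitly producing modular maximal chains, a finite computation (feasible by hand in the rank-$3$ case $(E_6,A_3)$, and with computer algebra such as \GAP/\CHEVIE\ otherwise).

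For the ``only if'' direction, the strategy is a case-by-case elimination running over the Shephard--Todd list. First reduce to $W$ irreducible and $\CA^X$ irreducible (a product is supersolvable iff each factor is), so that $X$ is not a point; the hypothesis $\dim X \geq 3$ is what makes this tractable, since in ranks $\geq 3$ supersolvability is a genuine restriction and the arithmetic of exponents/characteristic polynomials gives strong necessary conditions (a supersolvable arrangement of rank $\ell$ has exponents $(1,d_2,\dots,d_\ell)$ realizing the factorization of $\pi(\CA^X,t)$ into integer linear factors, with the additional divisibility constraints coming from the modular coatom). One then walks through: the infinite families $G(r,r,\ell)$ and $G(r,1,\ell)$, $G(r,p,\ell)$, whose restrictions are by \cite[Prop.\ 6.84--6.88]{orlikterao:arrangements} (Orlik--Solomon) always of the form $\CA^k_p(r)$ or products thereof, reducing to a purely combinatorial analysis of when $\CA^k_p(r)$ is supersolvable — this should yield exactly $k=p$ and $k=p-1$; and the finitely many exceptional groups $G_{23}=H_3,\ G_{24},\dots,G_{37}=E_8$, for which $L(\CA)$ and all restrictions $\CA^X$ are explicitly known (tables of restriction types in \cite{orliksolomon:unitaryreflectiongroups} and \cite{amendhogeroehrle:indfree}), so that one tests each $\CA^X$ with $\dim X\geq 3$ for a modular coatom. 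Since by Terao (Proposition \ref{prop:indfactoredindfree} and the remarks before it) supersolvable $\Rightarrow$ inductively factored $\Rightarrow$ inductively free, one may first discard every $\CA^X$ that is \emph{not} inductively free using Theorem \ref{thm:indfree-restriction}, which already cuts the exceptional candidates down to the $G_{24},G_{27},G_{29},G_{31},G_{33},G_{34}$ restrictions with $\dim X\leq 3$; among these, the rank-$\geq 4$ ones are excluded by the remark following Theorem \ref{thm:indfree-restriction}, leaving only rank-$3$ cases plus the $E_6,E_7,E_8$ Weyl-arrangement restrictions, which are then resolved by direct inspection.

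The main obstacle is the exceptional case analysis: one must certify, for a concrete but moderately long list of restricted arrangements of rank $3$, $4$, and $5$ coming from $G_{24},\dots,G_{34}$ and from $E_6,E_7,E_8$, whether or not a modular coatom exists. Ruling out supersolvability requires either a clean numerical obstruction (e.g.\ the characteristic polynomial failing to split over $\BBZ$, or splitting but with no chain of modular flats — detectable from the intersection lattice) or, where the numerics are inconclusive, a direct lattice computation; conversely, confirming the four arrangements in (iii) are supersolvable means actually exhibiting the modular chains. This is where I expect essentially all the work to lie, and where machine computation (with \GAP, \CHEVIE, or \Sage) is the natural tool; the infinite-family part, by contrast, reduces to a self-contained and relatively short combinatorial lemma about the lattices $L(\CA^k_p(r))$.
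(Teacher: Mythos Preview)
The paper does not prove Theorem \ref{thm:super-restriction}: it is quoted verbatim from \cite[Thm.~1.3]{amendhogeroehrle:super} and used as input for the classification in Theorems \ref{thm:nice}--\ref{thm:hered-indfac}. There is therefore no proof in this paper to compare your proposal against; you correctly observe this at the outset.

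As a sketch of how the cited paper \cite{amendhogeroehrle:super} proceeds, your outline is broadly right---reduce to irreducibles, handle the infinite family via the $\CA_p^k(r)$, and do a finite case analysis in the exceptional types using the Orlik--Solomon tables plus the filter through inductive freeness. Two remarks, however. First, your ``if'' direction for case (i) needs more care: the assertion that an arbitrary restriction $\CA^X$ of a supersolvable $\CA$ is again supersolvable is a genuine fact about upper intervals in supersolvable geometric lattices (joins with modular elements remain modular), not merely the observation that $L(\CA^X)$ is an interval; your parenthetical conflates localizations (lower intervals, where this is immediate) with restrictions. Second, there is a computational slip: the four restrictions in part (iii) are \emph{all} of rank $3$, not ranks $3,4,4,5$ as you write---$(E_7,D_4)$, $(E_7,A_2^2)$, and $(E_8,A_5)$ each have $\dim X = 3$ since the parabolic types $D_4$, $A_2^2$, $A_5$ have ranks $4$, $4$, $5$ inside ambient ranks $7$, $7$, $8$. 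Lemma \ref{lem:isoms}(i), (ii), (v) in this paper confirms this, identifying them with $\CA_3^2(2)$, $\CA_3^3(2)$, and each other, respectively.
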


In part (iii) of the theorem and later on
we use the convention 
to label the $W$-orbit 
of $X \in L(\CA)$ by the type $T$ which is 
the Shephard-Todd label \cite{shephardtodd}
of the complex reflection group $W_X$.
We then denote the restriction $\CA^X$ simply by the pair
$(W,T)$.

Note that thanks to 
Proposition \ref{prop:superindfactored}, 
every supersolvable restriction from 
Theorem \ref{thm:super-restriction}
is also inductively factored.

Thanks to the compatibility of 
nice arrangements and inductively factored arrangements  
with the product construction for arrangements, 
see Propositions 
\ref{prop:product-factored}
and
\ref{prop:product-indfactored},
as well as by the product rule \eqref{eq:restrproduct} for 
restrictions in products, 
the question whether the restrictions $\CA^X$
are nice or inductively factored 
reduces readily to the case when $\CA$ is irreducible.
Thus we may assume that $W$ is irreducible.
We can formulate our classification as follows:

\begin{theorem}
\label{thm:nice}
Let $W$ be a finite, irreducible, complex 
reflection group with reflection arrangement 
$\CA = \CA(W)$ and let $X \in L(\CA) \setminus \{V\}$. 
The restricted arrangement $\CA^X$ is 
nice if and only if one of the following holds:
\begin{itemize} 
\item[(i)]
$\CA^X$ is supersolvable;
\item[(ii)]
$\CA$ is nice; 
\item[(iii)]
$W = G(r,r,\ell)$, $\ell \ge 4$ and 
$\CA^X \cong \CA^{p-2}_p(r)$, where $p = \dim X$; 
\item[(iv)]
$\CA^X$ is one of $(E_6, A_1A_2), (E_7, A_4)$, or $(E_7, (A_1A_3)'')$.
\end{itemize}
\end{theorem}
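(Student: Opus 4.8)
The plan is to use the reduction to irreducible $W$ recorded just before the theorem, and then to run through the Shephard--Todd list, treating the infinite families and the exceptional groups separately. We first dispose of two easy cases. If $\dim X \le 2$, then $\CA^X$ has rank at most $2$, hence is supersolvable, so (i) holds; from now on we assume $\dim X \ge 3$. If $\CA$ is nice, then by Theorem \ref{thm:factoredrefl}(i) either $\CA$ is supersolvable --- in which case $\CA^X$ is supersolvable by Theorem \ref{thm:super-restriction}(i), so (i) holds --- or $W = G(r,r,3)$, in which case every $X \neq V$ already satisfies $\dim X \le 2$ and we are in the previous case. Thus (ii) is subsumed by (i), and it remains to determine, for $\dim X \ge 3$, exactly which restrictions $\CA^X$ are nice and to match the answer with (i), (iii), (iv).

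Turning to the infinite families: if $W = G(r,p,\ell)$ with $p < r$, then $\CA(W) = \CA(G(r,1,\ell))$, which is supersolvable, so all its restrictions are supersolvable by Theorem \ref{thm:super-restriction}(i) and yield nothing beyond (i). The substantive family is $W = G(r,r,\ell)$. By \cite[Prop.\ 2.14]{orliksolomon:unitaryreflectiongroups} (see Section \ref{sec:akl}) together with the compatibility of niceness with the product construction (Proposition \ref{prop:product-factored}), the problem reduces to deciding which intermediate arrangements $\CA^k_p(r)$, $p = \dim X$, are nice. To see that $\CA^{p-2}_p(r)$ is nice, I would apply the addition--deletion theorem for nice arrangements, Theorem \ref{thm:add-del-factored}, to the triple $\bigl(\CA^{p-1}_p(r),\, \CA^{p-2}_p(r),\, \bigl(\CA^{p-1}_p(r)\bigr)^{H}\bigr)$ with $H = \{x_{p-1} = 0\}$: one checks that $\CA^{p-1}_p(r) \setminus \{H\} = \CA^{p-2}_p(r)$ and that $\bigl(\CA^{p-1}_p(r)\bigr)^{H} \cong \CA(G(r,1,p-1))$, and since $\CA^{p-1}_p(r)$ (Theorem \ref{thm:super-restriction}(ii)) and $\CA(G(r,1,p-1))$ are supersolvable, hence nice, the deletion $\CA^{p-2}_p(r)$ is nice. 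For the converse one must show $\CA^k_p(r)$ is not nice when $k \le p-3$ (so $p \ge 4$): the case $k = 0$ is $\CA(G(r,r,p))$, not nice by Theorem \ref{thm:factoredrefl}(i); for $1 \le k \le p-4$ one localises $\CA^k_p(r)$ at the flat $\{x_{k+1} = \ldots = x_p = 0\}$, obtaining (up to an empty factor) a copy of $\CA(G(r,r,p-k))$ with $p-k \ge 4$, which is not nice --- and localisations of nice arrangements are nice; and for $k = p-3$ one localises at $\{x_{p-3} = \ldots = x_p = 0\}$, obtaining $\CA^1_4(r)$, so the whole `only if' part for this family rests on showing that $\CA^1_4(r)$ is not nice.

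For the exceptional groups there are, up to $W$-conjugacy, only finitely many pairs $(W, X)$ with $\dim X \ge 3$. For each one computes, with \CHEVIE{}/\GAP{}, the intersection lattice and the Poincaré polynomial of $\CA^X$; whenever the Poincaré polynomial does not factor as $\prod_i(1 + d_i t)$ with $d_i \in \BBZ_{\ge 0}$, Terao's factorization theorem shows $\CA^X$ is not nice, and this eliminates all but a handful of cases. For the survivors one checks directly, searching with \Singular{} and \GAP{} for a partition satisfying Terao's conditions, whether $\CA^X$ is nice. The outcome is that the nice exceptional restrictions with $\dim X \ge 3$ are precisely the supersolvable ones occurring in Theorem \ref{thm:super-restriction}(iii), which fall under (i), together with $(E_6, A_1A_2)$, $(E_7, A_4)$ and $(E_7, (A_1A_3)'')$, for which one exhibits explicit nice partitions; this accounts for (iv).

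The crux of the argument is the `only if' direction for the intermediate arrangements, which --- after the localisation reductions above --- amounts to proving that $\CA^1_4(r)$ is not nice for any $r \ge 2$. This is genuinely delicate because $\CA^1_4(r)$ is free, with exponents $1, r+1, 2r+1, 3r-2$, so its Poincaré polynomial factors and Terao's factorization theorem gives no obstruction; one must instead show by a direct combinatorial analysis --- tracking the partitions induced on the rank-$3$ localisations and deriving a contradiction --- that no partition of $\CA^1_4(r)$ into blocks of sizes $1, r+1, 2r+1, 3r-2$ is a nice partition. Granting this, the equivalence assembles: the `if' direction follows from Terao's theorem that supersolvable arrangements are nice (case (i)), from Theorem \ref{thm:factoredrefl} (case (ii)), from the addition--deletion argument (case (iii)) and from the explicit partitions (case (iv)), while the `only if' direction is the case analysis just outlined.
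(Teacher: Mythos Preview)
Your overall architecture matches the paper's: reduce to irreducible $W$, handle the infinite family via the intermediate arrangements $\CA^k_p(r)$, and treat the exceptional groups separately by a finite case check. The localisation reductions you describe for $1\le k\le p-3$ and the identification of $\CA^1_4(r)$ as the crucial obstruction are exactly what the paper does (Lemma~\ref{lem:al-3}).

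There is, however, a genuine gap in your treatment of the exceptional groups. Your proposed first filter --- discard $(W,X)$ whenever the Poincar\'e polynomial of $\CA^X$ fails to factor as $\prod_i(1+d_it)$ over $\BBZ_{\ge 0}$ --- is vacuous here: every restriction $\CA(W)^X$ of a reflection arrangement is free (this is the content of the Orlik--Solomon/Orlik--Terao tables you are already invoking), so its Poincar\'e polynomial \emph{always} factors with non-negative integer roots, namely the exponents listed in \cite[App.~C]{orlikterao:arrangements}. Thus this step eliminates nothing, and you are left with the full list of exceptional pairs $(W,X)$ with $\dim X\ge 3$, not ``a handful''. The paper handles this differently: it uses the fact that niceness is inherited by localisations (Remark~\ref{rem:factored}) to reduce every higher-rank exceptional restriction to a rank-$3$ or rank-$4$ restriction that has already been shown not to be nice (Table~\ref{table:higherrankrestrictions}), and then disposes of those low-rank cases by a direct (computer-assisted) combinatorial analysis of the rank-$2$ flats, as in the worked $(E_6,A_1^3)$ example in Lemma~\ref{lem:dim3}. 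You need some such reduction; the Poincar\'e-polynomial test cannot do the work.

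A second, smaller point: your deletion argument for the niceness of $\CA^{p-2}_p(r)$ is not complete as stated. Theorem~\ref{thm:add-del-factored} is a statement about a \emph{fixed} partition $\pi$: to deduce that $\pi'$ is nice for $\CA'=\CA^{p-2}_p(r)$ from (i) and (iii) you must exhibit a nice partition $\pi$ of $\CA^{p-1}_p(r)$ with $\ker x_{p-1}\in\pi_1$, verify that the restriction map $\R:\CA\setminus\pi_1\to\CA''$ is bijective, and check that the \emph{induced} partition $\pi''$ (not just some partition) is nice for $\CA''$. None of this is automatic from knowing merely that $\CA^{p-1}_p(r)$ and its restriction are supersolvable. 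The paper bypasses this by constructing an explicit inductive factorisation of $\CA^{\ell-2}_\ell(r)$ via an induction table (Lemma~\ref{lem:al-2}), which incidentally yields the stronger conclusion needed for Theorem~\ref{thm:indfac}.
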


Note that $(E_6, A_1A_2)$ and  $(E_7, A_4)$ in part (iv) above 
are isomorphic, see Lemma \ref{lem:isoms}(iv).

In contrast to the situation for the full reflection arrangements
(Theorems \ref{thm:factoredrefl} and \ref{thm:indfactoredrefl}),
the notions of niceness and inductive factoredness coincide
for their restricted counterparts.

\begin{theorem}
\label{thm:indfac}
Let $W$ be a finite, irreducible, complex 
reflection group with reflection arrangement 
$\CA = \CA(W)$ and let $X \in L(\CA) \setminus \{V\}$. 
The restricted arrangement $\CA^X$ is 
inductively factored if and only if it is factored.
\end{theorem}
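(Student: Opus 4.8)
The forward implication holds for every arrangement: an inductively factored arrangement is factored by construction (Definition~\ref{def:indfactored}). So the content of the theorem is the reverse implication, and the plan is to go through the classification of the factored restrictions $\CA^X$ furnished by Theorem~\ref{thm:nice} and to check that each of the four types of arrangement occurring there is inductively factored.

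Cases (i) and (ii) are quick. If $\CA^X$ is supersolvable, it is inductively factored by Proposition~\ref{prop:superindfactored}. If $\CA$ itself is nice, then by Theorem~\ref{thm:factoredrefl}(i) either $\CA$ is supersolvable — in which case $\CA^X$ is supersolvable too, since $L(\CA^X)$ is an upper interval in the supersolvable lattice $L(\CA)$ — or $W = G(r,r,3)$, in which case $\rank\CA^X \le \dim X \le 2$, so $\CA^X$ is again supersolvable; either way case (i) applies. In case (iv) there are, up to isomorphism, only the two rank-three arrangements $(E_7,A_4)\cong(E_6,A_1A_2)$ (Lemma~\ref{lem:isoms}(iv)) and $(E_7,(A_1A_3)'')$, and for each of them a direct computer-assisted calculation verifies that it is inductively factored.

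The substantive case is (iii), the intermediate arrangement $\CA^{p-2}_p(r)$ with $p = \dim X$. When $p = 2$ this is the rank-two arrangement $\CA^0_2(r) = \CA(G(r,r,2))$, which is supersolvable and hence covered above, so assume $p \ge 3$. Adjoining the coordinate hyperplane $H_0 = \{x_{p-1}=0\}$ to $\CA^{p-2}_p(r)$ yields $\CA^{p-1}_p(r)$, and a short computation identifies the restriction $(\CA^{p-1}_p(r))^{H_0}$ with $\CA^{p-1}_{p-1}(r) = \CA(G(r,1,p-1))$. Now $\CA^{p-1}_p(r)$ is supersolvable by Theorem~\ref{thm:super-restriction}(ii) and $\CA(G(r,1,p-1))$ is a monomial arrangement, hence supersolvable, so both are inductively factored by Proposition~\ref{prop:superindfactored}. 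I would take the canonical nice partition $\pi$ of the supersolvable arrangement $\CA^{p-1}_p(r)$ — grouping hyperplanes by the largest coordinate appearing in their defining linear forms — and verify that deleting $H_0$ leaves an honest $p$-block partition $\pi'$ of $\CA^{p-2}_p(r)$ (here $p \ge 3$ ensures the block of $H_0$ is not a singleton) while the partition induced on $(\CA^{p-1}_p(r))^{H_0}$ is precisely the canonical nice partition of $\CA(G(r,1,p-1))$; that is, $\bigl(\CA^{p-1}_p(r),\,\CA^{p-2}_p(r),\,\CA(G(r,1,p-1))\bigr)$ is a nice triple in the sense of Theorem~\ref{thm:add-del-factored}. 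The deletion part of the addition--deletion theorem for nice arrangements, in its inductive form and with both outer arrangements of the triple inductively factored, then yields that $\CA^{p-2}_p(r)$ is inductively factored, completing the proof.

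The part requiring care is the verification in case (iii) that the displayed triple satisfies the combinatorial hypotheses of Theorem~\ref{thm:add-del-factored} uniformly in $r$ and $p$, and --- if that theorem is phrased for ``factored'' rather than directly for ``inductively factored'' --- the step of promoting factoredness to inductive factoredness, which one carries out either via the inductive version of the theorem or by writing down an explicit inductive chain obtained through successive deletion of coordinate hyperplanes, recognizing the restriction at each stage as one of the supersolvable monomial arrangements $\CA(G(r,1,k))$. Case (iv), being a finite check, presents no conceptual difficulty.
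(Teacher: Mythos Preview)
Your overall strategy---reduce to the classification in Theorem~\ref{thm:nice} and verify that each type listed there is inductively factored---matches the paper's, and your treatment of cases (i), (ii), and (iv) is fine. (For (iv) the paper in fact writes down explicit inductive factorizations of $(E_6,A_1A_2)$ and $(E_7,(A_1A_3)'')$ inside the proof of Lemma~\ref{lem:dim3}, rather than leaving it to a machine check.)

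The gap is in case (iii). Your triple $\bigl(\CA_p^{p-1}(r),\,\CA_p^{p-2}(r),\,\CA_{p-1}^{p-1}(r)\bigr)$ with respect to $H_0=\{x_{p-1}=0\}$ is correct, and the deletion direction of Theorem~\ref{thm:add-del-factored} does show that $\CA_p^{p-2}(r)$ is \emph{nice}. But there is no ``inductive form'' of deletion available: Definition~\ref{def:indfactored} builds the class $\CIFAC$ purely by \emph{addition}---to place $(\CA,\pi)$ in $\CIFAC$ one must exhibit $H_0$ with both $(\CA',\pi')$ and $(\CA'',\pi'')$ already in $\CIFAC$. Knowing that $\CA_p^{p-1}(r)$ and $\CA_{p-1}^{p-1}(r)$ lie in $\CIFAC$ says nothing, a priori, about the deletion $\CA_p^{p-2}(r)$; the recursive definition runs the other way. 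Your proposed fallback, an ``explicit inductive chain obtained through successive deletion of coordinate hyperplanes,'' has exactly the same defect: each step is a deletion, so at no point do you produce the pair $(\CA',\pi')\in\CIFAC$ that the definition demands.

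The paper handles this case via Lemma~\ref{lem:al-2} (packaged into Theorem~\ref{thm:akl}), and the argument goes in the correct direction. It starts from the supersolvable, hence inductively factored, arrangement $\CA_{\ell-1}^{\ell-2}(r)\times\Phi_1$ and \emph{adds} the $(\ell-1)r$ hyperplanes $H_{i,\ell}^n$ one at a time, verifying at each step that the restriction is isomorphic to $\CA_{\ell-1}^{\ell-2}(r)$ or $\CA_{\ell-1}^{\ell-1}(r)$ equipped with a compatible inductive factorization (Table~\ref{table0}). That explicit addition chain is the substantive content of the case and is not a consequence of your deletion triple.
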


We also extend both theorems to the corresponding hereditary subclasses.

\begin{theorem}
\label{thm:hered-indfac}
Let $W$ be a finite, irreducible, complex 
reflection group with reflection arrangement 
$\CA = \CA(W)$ and let $X \in L(\CA) \setminus \{V\}$. 
Then the restricted arrangement $\CA^X$ is 
(inductively) factored if and only if it is 
hereditarily (inductively) factored.
\end{theorem}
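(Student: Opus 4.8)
The plan is to derive Theorem~\ref{thm:hered-indfac} from the classification in Theorem~\ref{thm:nice}, from Theorem~\ref{thm:indfac}, and from the known structure of the restrictions of the intermediate arrangements $\CA^k_\ell(r)$. One direction is immediate, since $(\CA^X)^X = \CA^X$: a hereditarily (inductively) factored restriction is (inductively) factored. For the converse I first record the reduction. For $Y \in L(\CA^X)$ we have $Y \subseteq X$, $Y \in L(\CA)$, and $(\CA^X)^Y = \CA^Y$; moreover $Y \subseteq X \subsetneq V$, so by Theorem~\ref{thm:indfac} the properties of being factored, inductively factored, or nice coincide for each such $\CA^Y$. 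Hence it suffices to prove: if $\CA^X$ is factored, then $\CA^Y$ is factored for every $Y \in L(\CA)$ with $Y \subseteq X$. Granting this, $\CA^X$ is hereditarily factored, and by Theorem~\ref{thm:indfac} also hereditarily inductively factored; and if $\CA^X$ is only assumed inductively factored, Theorem~\ref{thm:indfac} again makes it factored, reducing to the same statement.

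To prove this statement I would run through the four cases of Theorem~\ref{thm:nice} for $\CA^X$. If $\dim X \le 2$ there is nothing to do, as every arrangement of rank at most $2$ is supersolvable, hence factored, and then all $\CA^Y$ with $Y \subseteq X$ have rank at most $2$. If $\CA$ is itself nice (Theorem~\ref{thm:nice}(ii)), then $\CA$ is hereditarily factored by Theorem~\ref{thm:factoredrefl}(ii), so $\CA^Y$ is factored for every $Y \in L(\CA)$. If $\CA^X$ is one of the three arrangements in Theorem~\ref{thm:nice}(iv), or one of the exceptional supersolvable restrictions $(E_6,A_3)$, $(E_7,D_4)$, $(E_7,A_2^2)$, $(E_8,A_5)$ of Theorem~\ref{thm:super-restriction}(iii) occurring under Theorem~\ref{thm:nice}(i), then a case-by-case check gives $\dim X = 3$, so every proper restriction $\CA^Y$ has rank at most $2$ and is factored.

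It remains to treat the cases with $W = G(r,r,\ell)$: the supersolvable restrictions $\CA^X \cong \CA^p_p(r)$ and $\CA^X \cong \CA^{p-1}_p(r)$ from Theorem~\ref{thm:super-restriction}(ii), and the restrictions $\CA^X \cong \CA^{p-2}_p(r)$ with $\ell \ge 4$ from Theorem~\ref{thm:nice}(iii), where $p = \dim X$. Here I would use the description of the restrictions of the intermediate arrangements (see Section~\ref{sec:akl}, following \cite[\S 2]{orliksolomon:unitaryreflectiongroups} and \cite[\S 6.4]{orlikterao:arrangements}): for every $Y \in L(\CA^k_\ell(r))$ the restriction $(\CA^k_\ell(r))^Y$ is again an intermediate arrangement $\CA^{k'}_t(r)$ with $t = \dim Y$, and the number of coordinate directions carrying no coordinate hyperplane cannot grow under restriction, so that $t - k' \le \ell - k$ (for $t \le 1$ the arrangement has rank at most $1$ and is factored). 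For $\CA^X \cong \CA^p_p(r)$ or $\CA^{p-1}_p(r)$ the relevant value $\ell - k$ is $0$ or $1$, so every $\CA^Y$ with $Y \subseteq X$ is $\CA^t_t(r)$ or $\CA^{t-1}_t(r)$, which are supersolvable by Theorem~\ref{thm:super-restriction} (or of rank at most $2$), hence factored. For $\CA^X \cong \CA^{p-2}_p(r)$ the value $\ell - k$ is $2$, so every $\CA^Y$ with $Y \subseteq X$ is $\CA^t_t(r)$, $\CA^{t-1}_t(r)$ or $\CA^{t-2}_t(r)$: the first two are supersolvable as before, and $\CA^{t-2}_t(r)$ is itself a restriction of $\CA(G(r,r,\ell))$ with $\ell \ge 4$ of the form appearing in Theorem~\ref{thm:nice}(iii), hence factored (or of rank at most $2$ when $t \le 2$, hence factored). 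This proves the statement, and with it Theorem~\ref{thm:hered-indfac}.

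I expect the crux to be exactly the last paragraph: one must verify carefully that restricting $\CA^{p-2}_p(r)$ can only produce intermediate arrangements whose defect $\ell - k$ is still at most $2$, so that one never leaves the lists of Theorems~\ref{thm:super-restriction} and~\ref{thm:nice}. The remaining cases are either instances of the general fact that arrangements of rank at most $2$ are factored, a direct consequence of the hereditary statement for full reflection arrangements (Theorem~\ref{thm:factoredrefl}(ii)), or a finite verification on the rank-$3$ exceptional restrictions.
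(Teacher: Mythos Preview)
Your proposal is correct and follows essentially the same route as the paper. The paper's proof is a one-sentence appeal to Theorems~\ref{thm:indfac} and~\ref{thm:akl}, to \cite[Prop.~6.82]{orlikterao:arrangements}, and to Lemma~\ref{lem:3-arr}; you unpack exactly these ingredients into an explicit case analysis. The only minor differences are that you invoke Theorem~\ref{thm:factoredrefl}(ii) explicitly for the case where $\CA$ itself is nice (the paper leaves this implicit), and that you re-derive the content of Lemma~\ref{lem:3-arr} and of Corollary~\ref{cor:hered-akl} (the defect bound $t-k'\le \ell-k$ under restriction of $\CA^k_\ell(r)$, which is precisely the information contained in \cite[Prop.~6.82]{orlikterao:arrangements}) rather than citing them as packaged statements.
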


While Theorem \ref{thm:indfactoredrefl}(i) shows 
that the class of inductively factored reflection arrangements 
coincides with the class of supersolvable 
reflection arrangements, in contrast, Theorems
\ref{thm:super-restriction}, \ref{thm:nice}, 
and \ref{thm:indfac} show that 
the class of 
inductively factored restrictions of reflection arrangements 
properly contains the class consisting of 
supersolvable restrictions of reflection arrangements. 

The paper is organized as follows.
In the next section we recall the required notions 
and relevant properties of 
free, inductively free, 
supersolvable and nice arrangements 
mostly taken from
\cite{orlikterao:arrangements},
\cite{terao:factored} and 
\cite{hogeroehrle:factored}.

In Section \ref{sec:akl} we classify all nice and all 
inductively factored cases among the 
intermediate arrangements $\CA_\ell^k(r)$,
and complete the proofs of 
Theorems \ref{thm:nice} -- \ref{thm:hered-indfac}
in Section \ref{sec:proof}. 

For general information about arrangements 
we refer the reader to \cite{orlikterao:arrangements}.

\section{Recollections and Preliminaries}
\label{sect:prelims}

\subsection{Hyperplane arrangements}
\label{ssect:arrangements}
Let $\BBK$ be a field and let
$V = \BBK^\ell$ be an $\ell$-dimensional $\BBK$-vector space.
A \emph{(central) hyperplane arrangement} $\CA$ in $V$ 
is a finite collection of hyperplanes in $V$ each 
containing the origin of $V$.
We also use the term $\ell$-arrangement for $\CA$. 
The empty $\ell$-arrangement is denoted by $\Phi_\ell$. 

The \emph{lattice} $L(\CA)$ of $\CA$ is the set of subspaces of $V$ of
the form $H_1\cap \dotsm \cap H_i$ where $\{ H_1, \ldots, H_i\}$ is a subset
of $\CA$. 
For $X \in L(\CA)$, we have two associated arrangements, 
firstly
$\CA_X :=\{H \in \CA \mid X \subseteq H\} \subseteq \CA$,
the \emph{localization of $\CA$ at $X$}, 
and secondly, 
the \emph{restriction of $\CA$ to $X$}, $(\CA^X,X)$, where 
$\CA^X := \{ X \cap H \mid H \in \CA \setminus \CA_X\}$.
Note that $V$ belongs to $L(\CA)$
as the intersection of the empty 
collection of hyperplanes and $\CA^V = \CA$. 
The lattice $L(\CA)$ is a partially ordered set by reverse inclusion:
$X \le Y$ provided $Y \subseteq X$ for $X,Y \in L(\CA)$.

If $0 \in H$ for each $H$ in $\CA$, then 
$\CA$ is called \emph{central}.
If $\CA$ is central, then the \emph{center} 
$T_\CA := \cap_{H \in \CA} H$ of $\CA$ is the unique
maximal element in $L(\CA)$  with respect
to the partial order.
Throughout, we only consider central arrangements.

The \emph{product}
$\CA = (\CA_1 \times \CA_2, V_1 \oplus V_2)$ 
of two arrangements $(\CA_1, V_1), (\CA_2, V_2)$
is defined by
\begin{equation}
\label{eq:product}
\CA := \CA_1 \times \CA_2 = \{H_1 \oplus V_2 \mid H_1 \in \CA_1\} \cup 
\{V_1 \oplus H_2 \mid H_2 \in \CA_2\},
\end{equation}
see \cite[Def.~2.13]{orlikterao:arrangements}.

An arrangement $\CA$ is called \emph{reducible},
if it is of the form $\CA = \CA_1 \times \CA_2$, where 
$\CA_i \ne \Phi_0$ for $i=1,2$, else $\CA$
is \emph{irreducible}, 
\cite[Def.~2.15]{orlikterao:arrangements}.

If $\CA = \CA_1 \times \CA_2$ is a product, 
then by \cite[Prop.~2.14]{orlikterao:arrangements}
there is a lattice isomorphism
\[
 L(\CA_1) \times L(\CA_2) \cong L(\CA) \quad \text{by} \quad
(X_1, X_2) \mapsto X_1 \oplus X_2.
\]
With \eqref{eq:product}, it is easy to see that
for $X =  X_1 \oplus X_2 \in L(\CA)$, we have 
$\CA _X =  ({\CA_1})_{X_1} {\times} ({\CA_2})_{X_2}$
and 
\begin{equation}
\label{eq:restrproduct}
\CA^X = \CA_1^{X_1} \times \CA_2^{X_2}.
\end{equation}

\subsection{Free hyperplane arrangements}
\label{ssect:free}
Let $S = S(V^*)$ be the symmetric algebra of the dual space $V^*$ of $V$.
Let $\Der(S)$ be the $S$-module of $\BBK$-derivations of $S$.
Since $S$ is graded, 
$\Der(S)$ is a graded $S$-module.

Let $\CA$ be an arrangement in $V$. 
Then for $H \in \CA$ we fix $\alpha_H \in V^*$ with
$H = \ker \alpha_H$.
The \emph{defining polynomial} $Q(\CA)$ of $\CA$ is given by 
$Q(\CA) := \prod_{H \in \CA} \alpha_H \in S$.
The \emph{module of $\CA$-derivations} of $\CA$ is 
defined by 
\[
D(\CA) := \{\theta \in \Der(S) \mid \theta(Q(\CA)) \in Q(\CA) S\} .
\]
We say that $\CA$ is \emph{free} if 
$D(\CA)$ is a free $S$-module, cf.\ \cite[\S 4]{orlikterao:arrangements}.

If $\CA$ is a free arrangement, then the $S$-module
$D(\CA)$ admits a basis of $n$ homogeneous derivations, 
say $\theta_1, \ldots, \theta_n$, \cite[Prop.\ 4.18]{orlikterao:arrangements}.
While the $\theta_i$'s are not unique, their polynomial 
degrees $\pdeg \theta_i$ 
are unique (up to ordering). This multiset is the set of 
\emph{exponents} of the free arrangement $\CA$
and is denoted by $\exp \CA$.

Terao's celebrated \emph{Addition-Deletion Theorem} 
which we recall next
plays a 
pivotal role in the study of free arrangements, 
\cite[\S 4]{orlikterao:arrangements}.
For $\CA$ non-empty, 
let $H_0 \in \CA$.
Define $\CA' := \CA \setminus\{ H_0\}$,
and $\CA'' := \CA^{H_0} = \{ H_0 \cap H \mid H \in \CA'\}$,
the restriction of $\CA$ to $H_0$.
Then $(\CA, \CA', \CA'')$ is a \emph{triple} of arrangements,
\cite[Def.\ 1.14]{orlikterao:arrangements}. 

\begin{theorem}[\cite{terao:freeI}]
\label{thm:add-del}
Suppose that $\CA$ is a non-empty $\ell$-arrangement.
Let  $(\CA, \CA', \CA'')$ be a triple of arrangements. Then any 
two of the following statements imply the third:
\begin{itemize}
\item[(i)] $\CA$ is free with $\exp \CA = \{ b_1, \ldots , b_{\ell -1}, b_\ell\}$;
\item[(ii)] $\CA'$ is free with $\exp \CA' = \{ b_1, \ldots , b_{\ell -1}, b_\ell-1\}$;
\item[(iii)] $\CA''$ is free with $\exp \CA'' = \{ b_1, \ldots , b_{\ell -1}\}$.
\end{itemize}
\end{theorem}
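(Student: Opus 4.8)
The plan is to prove Theorem~\ref{thm:add-del} by combining three standard ingredients about a triple $(\CA,\CA',\CA'')$ with $H_0=\ker\alpha_0$: a four-term relation between the derivation modules, Saito's criterion, and the deletion--restriction recursion for Poincar\'e polynomials. Fix coordinates $x_1,\dots,x_\ell$ on $V$ with $\alpha_0=x_\ell$, so $\bar S:=S/(x_\ell)$ is the coordinate ring of $H_0$. Since $Q(\CA)=\alpha_0 Q(\CA')$, a direct check gives $D(\CA)=\{\theta\in D(\CA'):\theta(\alpha_0)\in\alpha_0 S\}$, hence $\alpha_0 D(\CA')\subseteq D(\CA)\subseteq D(\CA')$, and every $\theta\in D(\CA)$ preserves $(\alpha_0)$ and so descends to a derivation $\bar\theta$ of $\bar S$ lying in $D(\CA'')$. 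This yields an $S$-linear restriction map $\pi\colon D(\CA)\to D(\CA'')$ with $\ker\pi=\alpha_0\Der(S)\cap D(\CA)=\alpha_0 D(\CA')$, i.e. a left-exact sequence
\[
0\longrightarrow D(\CA')\xrightarrow{\ \alpha_0\ } D(\CA)\xrightarrow{\ \pi\ } D(\CA'').
\]
The second ingredient, used repeatedly, is Saito's criterion: homogeneous $\theta_1,\dots,\theta_\ell\in D(\CA)$ form an $S$-basis iff $\det(\theta_i(x_j))$ is a nonzero scalar multiple of $Q(\CA)$, iff they are $S$-independent with $\sum_i\pdeg\theta_i=|\CA|$; the analogous statement holds over $\bar S$ for $\ell-1$ derivations in $D(\CA'')$, and one also has the elementary divisibility $Q(\CB)\mid\det(\theta_i(x_j))$ for any tuple in $D(\CB)$. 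The third ingredient is the recursion $\Poin(\CA,t)=\Poin(\CA',t)+t\,\Poin(\CA'',t)$, valid for every triple, together with Terao's factorization $\Poin(\CB,t)=\prod_i(1+b_it)$ for free $\CB$ with $\exp\CB=\{b_i\}$.

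With these in hand, each of the three implications is proved the same way; take $(\mathrm{i})+(\mathrm{ii})\Rightarrow(\mathrm{iii})$ as the model. From $\CA,\CA'$ free with the stated exponents, the Hilbert series of the terms above are explicit, and a short computation shows $D(\CA')/D(\CA)$ (identified via $\theta\mapsto\overline{\theta(\alpha_0)}$ with a homogeneous ideal of the domain $\bar S$) has Hilbert series $t^{\,b_\ell-1}\cdot\mathrm{Hilb}(\bar S,t)$, hence is free of rank one, generated in degree $b_\ell-1$. Lift a generator to $\eta_\ell\in D(\CA')$, complete to a homogeneous $S$-basis $\eta_1,\dots,\eta_\ell$ of $D(\CA')$; then $\eta_1,\dots,\eta_{\ell-1}\in D(\CA)$ and $\{\eta_1,\dots,\eta_{\ell-1},\alpha_0\eta_\ell\}$ is an $S$-basis of $D(\CA)$. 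Reducing the Saito matrix $(\eta_i(x_j))_{1\le i,j\le\ell}$ modulo $\alpha_0=x_\ell$ kills the last column except in its bottom entry $\overline{\eta_\ell(x_\ell)}$ (because $\eta_i\in D(\CA)$ for $i<\ell$), so cofactor expansion gives
\[
\overline{\det(\eta_i(x_j))}\;=\;\overline{\eta_\ell(x_\ell)}\cdot\det\bigl(\bar\eta_i(\bar x_j)\bigr)_{1\le i,j\le\ell-1}.
\]
The left side is a nonzero scalar times $\overline{Q(\CA')}=\prod_{Y\in\CA''}\bar\beta_Y^{\,m(Y)}$, where $m(Y)$ counts the hyperplanes of $\CA'$ meeting $H_0$ in $Y$; since $\bar\eta_i\in D(\CA'')$ we have $\det(\bar\eta_i(\bar x_j))=g\,Q(\CA'')$ for some $g\in\bar S$, and $\overline{\eta_\ell(x_\ell)}$ is homogeneous of degree $b_\ell-1$ (it is nonzero because $\eta_\ell\notin D(\CA)$). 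Here ingredient three enters: the Poincar\'e recursion and Terao's factorization force $\Poin(\CA'',t)=\prod_{i<\ell}(1+b_it)$, so $|\CA''|=\sum_{i<\ell}b_i$; comparing total degrees in the displayed identity then gives $\deg g=0$, i.e. $g$ is a unit. By Saito's criterion over $\bar S$, $\CA''$ is free with $\exp\CA''=\{b_1,\dots,b_{\ell-1}\}$. The implications $(\mathrm{ii})+(\mathrm{iii})\Rightarrow(\mathrm{i})$ and $(\mathrm{i})+(\mathrm{iii})\Rightarrow(\mathrm{ii})$ run analogously: one first uses the Poincar\'e recursion to pin down the relevant hyperplane count ($|\CA|$, resp. $|\CA'|$), then builds the candidate homogeneous basis from the exact sequence (adjoining $\alpha_0$ times the ``extra'' basis vector, resp.\ dividing the appropriate basis vector by $\alpha_0$), and verifies $S$-independence and the degree sum by the same mod-$\alpha_0$ determinant reduction before invoking Saito.

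The main obstacle is precisely that the restriction map $\pi\colon D(\CA)\to D(\CA'')$ is in general \emph{not} surjective---its image can be a proper submodule of $D(\CA'')$---so one cannot merely transport bases between the three derivation modules, and this is exactly why ``any two imply the third'' is a substantive theorem rather than a formality. What resolves it is that under the two-out-of-three freeness hypotheses the Poincar\'e recursion makes the third Poincar\'e polynomial factor with exponents matching the claimed multiset $\{b_i\}$; feeding the resulting hyperplane count into the degree bookkeeping of the mod-$\alpha_0$ Saito determinant forces the a priori unknown cofactor $g$ to be a unit, which is equivalent to $\operatorname{im}\pi=D(\CA'')$. The only genuinely delicate point beyond this is keeping the determinant reduction honest---checking that none of the homogeneous derivations involved degenerates upon reduction modulo $\alpha_0$, so that the degrees that enter the count are exactly the $\pdeg$'s.
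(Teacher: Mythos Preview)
The paper does not prove Theorem~\ref{thm:add-del}; it simply quotes it from Terao's original paper \cite{terao:freeI} (cf.\ \cite[\S4.3]{orlikterao:arrangements}) as background for the later material. So there is no ``paper's own proof'' to compare against, and your proposal should be judged against the standard argument.

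Your treatment of $(\mathrm{i})+(\mathrm{ii})\Rightarrow(\mathrm{iii})$ is essentially the standard one and is correct: the Hilbert-series computation for $D(\CA')/D(\CA)\hookrightarrow\bar S$ (which needs both $\CA$ and $\CA'$ free) does force this quotient to be principal, letting you choose a basis $\eta_1,\dots,\eta_\ell$ of $D(\CA')$ with $\eta_1,\dots,\eta_{\ell-1}\in D(\CA)$, after which the cofactor expansion and the degree count via the Poincar\'e recursion go through. (Your appeal to Terao's factorization $\Poin(\CB,t)=\prod(1+b_it)$ for free $\CB$ is legitimate and not circular; that theorem is proved independently of Addition--Deletion.)

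The gap is in the other two implications. Your plan there is to ``build the candidate homogeneous basis from the exact sequence'' and then run the same mod-$\alpha_0$ determinant reduction. But the cofactor expansion along the last column only works once you already have $\ell-1$ basis elements of $D(\CA')$ lying in $D(\CA)$ (equivalently, once $D(\CA')/D(\CA)$ is known to be principal, or once $\pi$ is known to be surjective). In the direction $(\mathrm{ii})+(\mathrm{iii})\Rightarrow(\mathrm{i})$ you do \emph{not} know $D(\CA)$ is free, so you cannot compute the Hilbert series of $D(\CA')/D(\CA)$ and hence cannot conclude it is principal; for an arbitrary basis $\eta_1,\dots,\eta_\ell$ of $D(\CA')$ the entries $\overline{\eta_i(\alpha_0)}$ in the last column need not vanish for $i<\ell$, and no degree bookkeeping repairs this. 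The same obstruction appears in $(\mathrm{i})+(\mathrm{iii})\Rightarrow(\mathrm{ii})$: you need a basis element $\theta_\ell$ of $D(\CA)$ with $\bar\theta_\ell=0$ (so that $\theta_\ell/\alpha_0\in D(\CA')$), and producing one is again the surjectivity problem in disguise. Your final paragraph correctly identifies this as ``the main obstacle,'' but the Poincar\'e recursion does not resolve it: in these two directions it only recovers the trivial identity $|\CA|=|\CA'|+1$, and the determinant argument cannot start without the basis already arranged.

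What is actually needed here---and what Terao's proof (see \cite[Prop.~4.41 and Prop.~4.45]{orlikterao:arrangements}) supplies---is a direct construction: for each $\psi\in D(\CA'')$ one builds an explicit lift $\theta\in D(\CA)$ with $\bar\theta=\psi$, using the auxiliary polynomial $B=\prod_{Y\in\CA''}\alpha_{H(Y)}$ (one chosen preimage $H(Y)\in\CA'$ for each $Y\in\CA''$). This establishes surjectivity of $\pi$ whenever $\CA''$ is free, independently of any Poincar\'e-polynomial input, and is precisely the missing ingredient in your sketch for the two harder directions.
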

There are various stronger notions of freeness
which we discuss in the following subsections.

\subsection{Inductively free arrangements}
\label{ssect:indfree}

Theorem \ref{thm:add-del}
motivates the notion of 
\emph{inductively free} arrangements,  see 
\cite{terao:freeI} or 
\cite[Def.\ 4.53]{orlikterao:arrangements}.

\begin{defn}
\label{def:indfree}
The class $\CIF$ of \emph{inductively free} arrangements 
is the smallest class of arrangements subject to
\begin{itemize}
\item[(i)] $\Phi_\ell \in \CIF$ for each $\ell \ge 0$;
\item[(ii)] if there exists a hyperplane $H_0 \in \CA$ such that both
$\CA'$ and $\CA''$ belong to $\CIF$, and $\exp \CA '' \subseteq \exp \CA'$, 
then $\CA$ also belongs to $\CIF$.
\end{itemize}
\end{defn}

\subsection{Supersolvable arrangements}
\label{ssect:supersolve}

Let $\CA$ be an arrangement. We say
that $X \in L(\CA)$ is \emph{modular}
provided $X + Y \in L(\CA)$ for every $Y \in L(\CA)$,
 \cite[Cor.\ 2.26]{orlikterao:arrangements}.

\begin{defn}
[{\cite{stanley:super}}]
\label{def:super}
Let $\CA$ be a central arrangement of rank $r$.
We say that $\CA$ is 
\emph{supersolvable} 
provided there is a maximal chain
\[
V = X_0 < X_1 < \ldots < X_{r-1} < X_r = T_\CA
\]
 of modular elements $X_i$ in $L(\CA)$,
cf.\ \cite[Def.\ 2.32]{orlikterao:arrangements}.
\end{defn}

The connection of this notion with freeness is
due to Jambu and Terao.

\begin{theorem}
[{\cite[Thm.\ 4.2]{jambuterao:free}}]
\label{thm:superindfree}
A supersolvable arrangement is inductively free.
\end{theorem}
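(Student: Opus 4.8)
The plan is to prove the statement by induction on the rank $r$ of $\CA$, building $\CA$ up from a localization by repeated application of Terao's Addition--Deletion Theorem \ref{thm:add-del} in the form of Definition \ref{def:indfree}(ii). If $r=0$, then $\CA=\Phi_\ell\in\CIF$ by Definition \ref{def:indfree}(i). So suppose $r\ge 1$, fix a modular chain $V=X_0<X_1<\dots<X_{r-1}<X_r=T_\CA$ as in Definition \ref{def:super}, and set $Y:=X_{r-1}$, an element of $L(\CA)$ of rank $r-1$. Since each $X_i$ with $i\le r-1$ satisfies $X_i\supseteq Y$, for any $Z\in L(\CA_Y)$ we get $X_i+Z\supseteq Y$ and $X_i+Z\in L(\CA)$, so $X_i+Z\in L(\CA_Y)$; as $T_{\CA_Y}=Y$, this shows $X_0<\dots<X_{r-1}$ is a modular chain for the localization $\CA_Y$, which is therefore supersolvable of rank $r-1$ and lies in $\CIF$ by the induction hypothesis.

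The crucial structural fact is: for every $H_0\in\CA$ with $Y\not\subseteq H_0$, one has
\[
\CA^{H_0}=\{\,H_0\cap H\mid H\in\CA_Y\,\},
\]
and, up to a trivial factor, $\CA^{H_0}$ is the same arrangement as $\CA_Y$. The point is that $Y\cap H_0$ has codimension $r$ in $V$ and hence equals the centre $T_\CA$; so for any $H\in\CA$ with $H\ne H_0$, putting $W:=H_0\cap H$ gives $Y\cap W=T_\CA\cap H=T_\CA$, whence $\dim(Y+W)=\dim Y+\dim W-\dim T_\CA=\ell-1$. Thus $Y+W\in L(\CA)$ has codimension $1$, so $Y+W\in\CA$, and it contains $Y$, i.e.\ $Y+W\in\CA_Y$; moreover $H_0\cap(Y+W)=W$, which yields the displayed equality. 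Furthermore the bijection $H\mapsto H_0\cap H$ from $\CA_Y$ onto $\CA^{H_0}$ is induced by the linear projection $V/T_\CA\to H_0/T_\CA$ along the line $Y/T_\CA$ (the splitting being available since $Y\cap H_0=T_\CA$), so $\CA_Y\cong\CA^{H_0}\times\Phi_1$; in particular $\CA^{H_0}$ is supersolvable of rank $r-1$ with $\exp\CA^{H_0}=\exp\CA_Y\setminus\{0\}$, and lies in $\CIF$ by the induction hypothesis.

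Now enumerate $\CA\setminus\CA_Y=\{H_1,\dots,H_m\}$, which is nonempty because $\rank\CA=r>r-1=\rank\CA_Y$, and put $\CB_0:=\CA_Y$ and $\CB_j:=\CA_Y\cup\{H_1,\dots,H_j\}$, so $\CB_m=\CA$. For each $j$ we have $\CB_j^{H_j}=\CA^{H_j}$, since
\[
\{\,H_j\cap H\mid H\in\CA_Y\,\}\subseteq\CB_j^{H_j}\subseteq\CA^{H_j}=\{\,H_j\cap H\mid H\in\CA_Y\,\}
\]
by the structural fact. A short induction on $j$ via Definition \ref{def:indfree}(ii) then gives $\CB_j\in\CIF$ with $\exp\CB_j=(\exp\CA_Y\setminus\{0\})\cup\{j\}$: in the triple $(\CB_j,\CB_{j-1},\CB_j^{H_j})$ we have $\CB_{j-1}\in\CIF$ (by induction, the base case being $\CB_0=\CA_Y$), $\CB_j^{H_j}=\CA^{H_j}\in\CIF$ by the second paragraph, and the required exponent inclusion
\[
\exp\CB_j^{H_j}=\exp\CA_Y\setminus\{0\}\ \subseteq\ (\exp\CA_Y\setminus\{0\})\cup\{j-1\}=\exp\CB_{j-1}
\]
is immediate; Definition \ref{def:indfree}(ii), together with Theorem \ref{thm:add-del} for the exponents, yields the claim. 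Taking $j=m$ gives $\CA=\CB_m\in\CIF$, as desired.

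The step I expect to be the main obstacle is the structural fact of the second paragraph — the observation that a rank-$(r-1)$ modular element $Y$ meets every hyperplane avoiding it exactly in the centre $T_\CA$, and the deduction that $\CA^{H_0}$ then coincides with $\{H_0\cap H\mid H\in\CA_Y\}$ and agrees with $\CA_Y$ up to a trivial factor. Once this is in place, the exponent bookkeeping needed to feed Definition \ref{def:indfree}(ii) is routine.
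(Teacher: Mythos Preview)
The paper does not supply its own proof of this statement; Theorem~\ref{thm:superindfree} is recorded solely with a citation to \cite[Thm.~4.2]{jambuterao:free}. There is therefore nothing in the paper to compare against. Your argument is correct and is essentially the classical Jambu--Terao proof (cf.\ also \cite[Lem.~2.62, Thm.~4.58]{orlikterao:arrangements}): the decisive point is precisely your ``structural fact'' that for the corank-one modular element $Y=X_{r-1}$ and any $H_0\in\CA\setminus\CA_Y$ one has $Y\cap H_0=T_\CA$, whence every $H_0\cap H$ with $H\in\CA$ already arises as $H_0\cap H'$ for some $H'\in\CA_Y$, and the map $H'\mapsto H_0\cap H'$ identifies $\CA_Y$ with $\CA^{H_0}\times\Phi_1$. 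The inner induction adding the hyperplanes of $\CA\setminus\CA_Y$ one at a time, with $\CB_j^{H_j}=\CA^{H_j}$ at each step, is the standard way to feed Definition~\ref{def:indfree}(ii).

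One cosmetic remark: when you write $\exp\CA^{H_0}=\exp\CA_Y\setminus\{0\}$ you mean removing a \emph{single} zero from the multiset (there are $\ell-r+1$ zeros in $\exp\CA_Y$ and $\ell-r$ in $\exp\CA^{H_0}$); this is clear from context, and with that reading your formula $\exp\CB_j=(\exp\CA_Y\setminus\{0\})\cup\{j\}$ also covers the base case $j=0$.
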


\subsection{Nice and inductively factored arrangements}
\label{ssect:factored}

The notion of a \emph{nice} or \emph{factored} 
arrangement goes back to Terao \cite{terao:factored}.
It generalizes the concept of a supersolvable arrangement, see
\cite[Thm.\ 5.3]{orliksolomonterao:hyperplanes} and 
\cite[Prop.\ 2.67, Thm.\ 3.81]{orlikterao:arrangements}.
Terao's main motivation was to give a 
general combinatorial framework to 
deduce factorizations of the underlying Orlik-Solomon algebra,
see also \cite[\S 3.3]{orlikterao:arrangements}.
We recall the relevant notions  
from \cite{terao:factored}
(cf.\  \cite[\S 2.3]{orlikterao:arrangements}):

\begin{defn}
\label{def:factored}
Let $\pi = (\pi_1, \ldots , \pi_s)$ be a partition of $\CA$.
\begin{itemize}
\item[(a)]
$\pi$ is called \emph{independent}, provided 
for any choice $H_i \in \pi_i$ for $1 \le i \le s$,
the resulting $s$ hyperplanes are linearly independent, i.e.\
$r(H_1 \cap \ldots \cap H_s) = s$.
\item[(b)]
Let $X \in L(\CA)$.
The \emph{induced partition} $\pi_X$ of $\CA_X$ is given by the non-empty 
blocks of the form $\pi_i \cap \CA_X$.
\item[(c)]
$\pi$ is
\emph{nice} for $\CA$ or a \emph{factorization} of $\CA$  provided 
\begin{itemize}
\item[(i)] $\pi$ is independent, and 
\item[(ii)] for each $X \in L(\CA) \setminus \{V\}$, 
the induced partition $\pi_X$ admits a block 
which is a singleton. 
\end{itemize}
\end{itemize}
If $\CA$ admits a factorization, then we also say that $\CA$ is \emph{factored} or \emph{nice}.
\end{defn}

\begin{remark}
\label{rem:factored}
The class of nice arrangements is closed under taking localizations.
For, if $\CA$ is non-empty and   
$\pi$ is a nice partition of $\CA$, then the non-empty parts of the 
induced partition $\pi_X$ form a nice partition of $\CA_X$
for each $X \in L(\CA)\setminus\{V\}$;
cf.~the proof of \cite[Cor.\ 2.11]{terao:factored}.
\end{remark}

The main motivation in \cite{terao:factored} 
to introduce 
the notion of a nice or factored partition was that it allows for a 
combinatorial characterization of tensor factorizations
as a graded $\BBK$-algebra of the Orlik-Solomon algebra of an 
arrangement.
We record a set of consequences of this result that are 
relevant for our purposes, see \cite{terao:factored}   
(cf.\  \cite[\S 3.3]{orlikterao:arrangements}).

\begin{corollary}
\label{cor:teraofactored}
Let  $\pi = (\pi_1, \ldots, \pi_s)$ be a factorization of $\CA$.
Then the following hold:
\begin{itemize}
\item[(i)] $s = r = r(\CA)$ and 
\[
\Poin(\CA,t) = \prod_{i=1}^r (1 + |\pi_i|t);
\]
\item[(ii)]
the multiset $\{|\pi_1|, \ldots, |\pi_r|\}$ only depends on $\CA$;
\item[(iii)]
for any $X \in L(\CA)$, we have
\[
r(X) = |\{ i \mid \pi_i \cap \CA_X \ne \varnothing \}|.
\]  
\end{itemize}
\end{corollary}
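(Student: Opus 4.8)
\textbf{Proof proposal for Corollary \ref{cor:teraofactored}.}

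The plan is to reduce all three statements to the master result of \cite{terao:factored}, namely that a factorization $\pi = (\pi_1,\dots,\pi_s)$ of $\CA$ induces a tensor decomposition of the Orlik--Solomon algebra $A(\CA)$ as a graded $\BBK$-vector space, $A(\CA) \cong \bigotimes_{i=1}^s A_i$, where each factor $A_i$ is the span of $1$ together with the classes corresponding to the hyperplanes in $\pi_i$, so that $A_i$ has Poincaré polynomial $1 + |\pi_i| t$. First I would recall that $\Poin(\CA,t) = \sum_{X \in L(\CA)} \mu(V,X)(-t)^{r(X)}$ is, by Terao's factorization theorem, the Poincaré polynomial of $\bigotimes_i A_i$, hence equals $\prod_{i=1}^s (1 + |\pi_i| t)$; comparing the degree of this polynomial in $t$ with the known fact that $\deg \Poin(\CA,t) = r = r(\CA)$ (each $|\pi_i| \ge 1$ since the $\pi_i$ are non-empty blocks of a partition) forces $s = r$, which gives (i). For (ii), the multiset $\{|\pi_1|,\dots,|\pi_r|\}$ is recovered from the factorization $\prod_i (1 + |\pi_i|t)$ of the polynomial $\Poin(\CA,t)$ into linear factors over $\BBZ$; since $\BBZ[t]$ is a UFD and each $1 + |\pi_i| t$ is determined up to order by its root $-1/|\pi_i|$, the multiset depends only on $\Poin(\CA,t)$, and thus only on $\CA$ (indeed only on $L(\CA)$).

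For (iii), fix $X \in L(\CA)$. By Remark \ref{rem:factored} the non-empty blocks $\pi_i \cap \CA_X$ form a factorization of the localization $\CA_X$. The number of such non-empty blocks is precisely $|\{ i \mid \pi_i \cap \CA_X \ne \varnothing\}|$, and applying part (i) to the factored arrangement $\CA_X$ identifies this number with $r(\CA_X)$. Finally $r(\CA_X) = r(X)$: the center of $\CA_X$ is $\bigcap_{H \supseteq X} H$, which contains $X$, and since $X$ is itself an intersection of hyperplanes of $\CA$ all of which contain $X$ (hence lie in $\CA_X$), the center of $\CA_X$ equals $X$, so $r(\CA_X) = \codim T_{\CA_X} = \codim X = r(X)$. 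This closes the argument.

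I do not expect a serious obstacle here, as all three parts are essentially bookkeeping around Terao's theorem; the only point requiring a little care is part (ii), where one must be explicit that ``depends only on $\CA$'' follows because $\Poin(\CA,t)$ is a combinatorial invariant of $L(\CA)$ and the factorization into integer-linear factors is unique. One could alternatively prove (ii) directly from (iii): taking $X = T_\CA$ and smaller flags recovers the $|\pi_i|$ from the ranks $r(X)$ as $X$ ranges over $L(\CA)$, but the UFD argument is cleaner and is the one I would write up.
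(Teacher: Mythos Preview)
Your argument is correct. Note, however, that the paper does not give its own proof of this corollary: it is stated as a direct consequence of Terao's factorization theorem and simply cited to \cite{terao:factored} and \cite[\S 3.3]{orlikterao:arrangements}. What you have written is essentially the standard derivation found in those references --- the tensor decomposition of the Orlik--Solomon algebra gives the Poincar\'e polynomial factorization, degree comparison forces $s = r(\CA)$, unique factorization in $\BBZ[t]$ gives (ii), and (iii) follows by localizing via Remark~\ref{rem:factored} and reapplying (i). The only cosmetic point is that Remark~\ref{rem:factored} is stated for $X \in L(\CA)\setminus\{V\}$, so you might add a word that the case $X = V$ is trivial (both sides vanish); otherwise nothing is missing.
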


\begin{remark}
\label{rem:exponents}
Suppose that $\CA$ is free of rank $r$.
Then $\CA = \Phi_{\ell-r} \times \CA_0$, 
where $\CA_0$ is an essential, free $r$-arrangement
(cf.\ \cite[\S 3.2]{orlikterao:arrangements}), and so,  
$\exp \CA = \{0^{\ell-r}, \exp \CA_0\}$.
Suppose that $\pi = (\pi_1, \ldots, \pi_r)$ is a nice partition 
of $\CA$. 
Then by the factorization properties of the Poincar\'e polynomials
for free and factored arrangements, we have 
\begin{equation*}
\label{eq:exp}
\exp \CA = \{0^{\ell-r}, |\pi_1|, \ldots, |\pi_r|\}.
\end{equation*}
In particular, if $\CA$ is essential, then 
\begin{equation*}
\label{eq:ess-exp}
\exp \CA = \{|\pi_1|, \ldots, |\pi_\ell|\}.
\end{equation*}
\end{remark}

Finally, we record 
\cite[Ex.\ 2.4]{terao:factored},
which shows that nice arrangements generalize 
supersolvable ones 
(cf.\  \cite[Thm.\ 5.3]{orliksolomonterao:hyperplanes}, 
\cite[Prop.\ 3.2.2]{jambu:factored}, 
\cite[Prop.\ 2.67, Thm.\ 3.81]{orlikterao:arrangements}).

\begin{proposition}
\label{prop:ssfactored}
Let $\CA$ be a central,  supersolvable arrangement of rank $r$.
Let 
\[
V = X_0 < X_1 < \ldots < X_{r-1} < X_r = T_\CA
\]
be a  maximal chain of modular elements in $L(\CA)$.
Define
$\pi_i = \CA_{X_i} \setminus \CA_{X_{i-1}}$
for $1 \le i \le r$.
Then $\pi = (\pi_1, \ldots, \pi_r)$ is a nice partition of $\CA$.
\end{proposition}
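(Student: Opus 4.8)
The plan is to check directly that the partition $\pi = (\pi_1, \dots, \pi_r)$ satisfies the two requirements in Definition~\ref{def:factored}(c). I would first dispose of the bookkeeping. Since the chain of modular elements is maximal we have $r(X_i) = \codim X_i = i$ for all $i$; since $X_i \subseteq X_{i-1}$ as subspaces, $\CA_{X_{i-1}} \subseteq \CA_{X_i}$, with $\CA_{X_0} = \CA_V = \varnothing$ and $\CA_{X_r} = \CA_{T_\CA} = \CA$; and these inclusions are strict because every $Z \in L(\CA)$ satisfies $Z = \bigcap \CA_Z$, so $\CA_{X_{i-1}} = \CA_{X_i}$ would force $X_{i-1} = X_i$. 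Hence $\pi$ really is a partition of $\CA$ into $r$ non-empty blocks. I would also record the elementary fact that a codimension-one element of $L(\CA)$ is a hyperplane of $\CA$; in particular $\pi_1 = \CA_{X_1} = \{X_1\}$.

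For independence (Definition~\ref{def:factored}(c)(i)) I would prove, by induction on $i$, the sharper statement that $H_1 \cap \dots \cap H_i = X_i$ for \emph{every} choice $H_j \in \pi_j$ with $1 \le j \le i$. The case $i = 1$ is the observation just made. For the step: $H_i \in \pi_i$ means $X_i \subseteq H_i$ but $X_{i-1} \not\subseteq H_i$, so intersecting the inductive hypothesis $H_1 \cap \dots \cap H_{i-1} = X_{i-1}$ with $H_i$ drops the dimension by exactly one, giving $H_1 \cap \dots \cap H_i = X_{i-1} \cap H_i$ of codimension $i$; since $X_i \subseteq X_{i-1} \cap H_i$ also has codimension $i$, the two subspaces coincide. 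Taking $i = r$ yields $H_1 \cap \dots \cap H_r = X_r = T_\CA$, of codimension $r$, which is exactly the required linear independence of $\{H_1, \dots, H_r\}$.

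The substantive step is the singleton condition (Definition~\ref{def:factored}(c)(ii)), and this is where modularity is essential. Fix $Y \in L(\CA) \setminus \{V\}$ with $r(Y) = k \ge 1$. Because each $X_i$ is modular, $W_i := X_i + Y$ again lies in $L(\CA)$, and $V = W_0 \supseteq W_1 \supseteq \dots \supseteq W_r = Y$ (using $T_\CA \subseteq Y$) is a chain whose codimension increases by $0$ or $1$ at each step, since $\dim X_{i-1} - \dim X_i = 1$ forces $\dim W_{i-1} - \dim W_i \le 1$; hence there are exactly $k$ ``jump'' indices, those $i$ at which the codimension strictly increases. The key observation is that for $H \in \CA$ one has $X_i \subseteq H$ and $Y \subseteq H$ if and only if $W_i \subseteq H$, and that for such $H$ one has $X_{i-1} \subseteq H$ if and only if $W_{i-1} \subseteq H$; therefore $\pi_i \cap \CA_Y = \CA_{W_i} \setminus \CA_{W_{i-1}}$, which is non-empty precisely at the jump indices. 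Now let $i$ be the \emph{first} jump: then $W_{i-1} = W_0 = V$, so $W_i$ has codimension one, i.e.\ is a single hyperplane of $\CA$, and $\pi_i \cap \CA_Y = \{W_i\}$ is a singleton block of the induced partition $\pi_Y$. This gives Definition~\ref{def:factored}(c)(ii), so $\pi$ is a factorization of $\CA$.

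I do not anticipate a real obstacle -- this is in essence Terao's original example -- and the only conceptual input is the use of modularity of the $X_i$ to keep the auxiliary chain $W_i = X_i + Y$ inside $L(\CA)$ (with unit steps), so that its localizations behave well; everything else is elementary linear algebra on subspaces. The one point requiring steady care throughout is translating correctly between the reverse-inclusion order on $L(\CA)$ and ordinary inclusion of subspaces.
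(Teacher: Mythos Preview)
The paper does not supply its own proof of this proposition: it is recorded in the preliminaries as \cite[Ex.~2.4]{terao:factored} (with further cross-references to \cite{orliksolomonterao:hyperplanes}, \cite{jambu:factored}, and \cite{orlikterao:arrangements}), so there is nothing in the paper to compare your argument against directly.

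That said, your proof is correct and is essentially the standard argument. The bookkeeping and the inductive proof of independence are routine and accurate. For the singleton condition, your use of the auxiliary chain $W_i = X_i + Y$ is exactly the place where modularity enters, and your identification $\pi_i \cap \CA_Y = \CA_{W_i} \setminus \CA_{W_{i-1}}$ is the clean way to see why the first jump index yields a singleton block. The only cosmetic remark is that the claim ``$\pi_i \cap \CA_Y$ is non-empty precisely at the jump indices'' already gives, as a by-product, Corollary~\ref{cor:teraofactored}(iii) for supersolvable arrangements; you do not need this for the proof, but it is a pleasant consistency check.
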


In \cite[Prop.\ 3.29]{hogeroehrle:factored},
it was shown that
the product construction behaves well with factorizations.

\begin{proposition}
\label{prop:product-factored}
Let $\CA_1, \CA_2$ be two arrangements.
Then  $\CA = \CA_1 \times \CA_2$ is nice
if and only if both 
$\CA_1$ and $\CA_2$ are nice.
\end{proposition}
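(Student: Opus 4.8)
The plan is to prove Proposition \ref{prop:product-factored} by producing a factorization of the product out of factorizations of the factors, and conversely recovering factorizations of the factors from one of the product. For the ``if'' direction, suppose $\pi = (\pi_1, \ldots, \pi_s)$ is a factorization of $\CA_1$ and $\sigma = (\sigma_1, \ldots, \sigma_t)$ is a factorization of $\CA_2$. Using the canonical identification of $\CA$ with the disjoint union of (copies of) $\CA_1$ and $\CA_2$ coming from \eqref{eq:product}, I would set $\rho := (\pi_1, \ldots, \pi_s, \sigma_1, \ldots, \sigma_t)$, viewing each $\pi_i$ as a block of hyperplanes $H \oplus V_2$ and each $\sigma_j$ as a block of hyperplanes $V_1 \oplus H$. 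The claim is that $\rho$ is nice for $\CA$. Independence is immediate: picking one hyperplane from each block, the chosen hyperplanes from the $\pi_i$ part are independent in $V_1^*$ by independence of $\pi$, the chosen ones from the $\sigma_j$ part are independent in $V_2^*$ by independence of $\sigma$, and since $V^* = V_1^* \oplus V_2^*$ the union is independent; equivalently $r(H_1 \cap \cdots \cap H_s \cap K_1 \cap \cdots \cap K_t) = s + t$.

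For the singleton condition, I would use the lattice isomorphism $L(\CA) \cong L(\CA_1) \times L(\CA_2)$ recalled in \S\ref{ssect:arrangements}, so every $X \in L(\CA) \setminus \{V\}$ is of the form $X = X_1 \oplus X_2$ with $X_i \in L(\CA_i)$ and $(X_1, X_2) \neq (V_1, V_2)$. By the localization product rule $\CA_X = (\CA_1)_{X_1} \times (\CA_2)_{X_2}$, the induced partition $\rho_X$ decomposes as the induced partitions $\pi_{X_1}$ on $(\CA_1)_{X_1}$ together with $\sigma_{X_2}$ on $(\CA_2)_{X_2}$ (blocks that meet only one factor). If $X_1 \neq V_1$, then $\pi_{X_1}$ has a singleton block by niceness of $\pi$, and that block is still a singleton block of $\rho_X$; symmetrically if $X_2 \neq V_2$ we use $\sigma$. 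Since $X \neq V$, at least one of these holds, so $\rho_X$ has a singleton block. Hence $\rho$ is a factorization and $\CA$ is nice.

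For the ``only if'' direction, suppose $\CA = \CA_1 \times \CA_2$ is nice. One clean route is to observe that $\CA_1$ and $\CA_2$ arise as localizations of $\CA$: taking $X = V_1 \oplus T_{\CA_2}$ gives $\CA_X = \CA_1 \times \Phi$, which I identify with $\CA_1$, and similarly for $\CA_2$. By Remark \ref{rem:factored} the class of nice arrangements is closed under localization, so each of $\CA_1$ and $\CA_2$ is nice. (Alternatively, and more explicitly, one can take a factorization $\rho$ of $\CA$, show via Corollary \ref{cor:teraofactored}(iii) applied to $X = V_1 \oplus T_{\CA_2}$ that exactly $r(\CA_1)$ blocks of $\rho$ meet $\CA_1$, restrict $\rho$ to those blocks to get a partition of $\CA_1$, and check independence and the singleton condition block-by-block using the lattice product again; but invoking Remark \ref{rem:factored} avoids the bookkeeping.)

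I do not expect a serious obstacle here; the content is entirely the compatibility of the combinatorial data (partitions, independence, induced partitions) with the direct-sum structure of $V = V_1 \oplus V_2$. The one point needing a little care is the precise correspondence between blocks of $\rho_X$ and the pair $(\pi_{X_1}, \sigma_{X_2})$ — specifically that a block $\pi_i$ of $\rho$ contributes to $\rho_X$ exactly the set $\pi_i \cap (\CA_1)_{X_1}$ (thought of inside $\CA_X$) and never mixes with the $\CA_2$-part — but this is immediate from \eqref{eq:product} together with $\CA_X = (\CA_1)_{X_1} \times (\CA_2)_{X_2}$, since no hyperplane of $\CA$ lies in both $\{H \oplus V_2\}$ and $\{V_1 \oplus H\}$. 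So the main ``work'' is just setting up notation so these identifications are transparent.
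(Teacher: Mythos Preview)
The paper does not give its own proof of this proposition; it is quoted from \cite[Prop.\ 3.29]{hogeroehrle:factored}. Your argument is correct and is the natural one: build a factorization of the product by concatenating factorizations of the factors, and for the converse use that each factor is (up to a trivial $\Phi$-factor) a localization of the product together with Remark~\ref{rem:factored}.

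One harmless slip in the converse: with $X = V_1 \oplus T_{\CA_2}$ the product formula for localizations gives $\CA_X = (\CA_1)_{V_1} \times (\CA_2)_{T_{\CA_2}} = \Phi_{\dim V_1} \times \CA_2$, not $\CA_1 \times \Phi$, since $(\CA_1)_{V_1}$ is empty (no hyperplane contains all of $V_1$). So this particular $X$ recovers $\CA_2$; the choice yielding $\CA_1$ is $X = T_{\CA_1} \oplus V_2$. As you go on to say ``and similarly for $\CA_2$'', both factors are covered regardless and nothing is affected.
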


Following Jambu and Paris 
\cite{jambuparis:factored}, 
we introduce further notation.
Suppose $\CA$ is not empty. 
Let $\pi = (\pi_1, \ldots, \pi_s)$ be a partition of $\CA$.
Let $H_0 \in \pi_1$ and 
let $(\CA, \CA', \CA'')$ be the triple associated with $H_0$. 
Then $\pi$ induces a 
partition 
$\pi'$ of 
$\CA'$, i.e.\ the non-empty 
subsets $\pi_i \cap \CA'$.
Note that since $H_0 \in \pi_1$, we have
$\pi_i \cap \CA' = \pi_i$ 
for $i = 2, \ldots, s$. 
Also, associated with $\pi$ and $H_0$, we define 
the \emph{restriction map}
\[
\R := \R_{\pi,H_0} : \CA \setminus \pi_1 \to \CA''\ \text{ given by } \ H \mapsto H \cap H_0
\]
and set 
\[
\pi_i'' := \R(\pi_i) = \{H \cap H_0 \mid H \in \pi_i\} \
\text{ for }\  2 \le i \le s.
\]
In general, $\R$ need not be surjective nor injective.
However, since we are only concerned with cases when 
$\pi'' = (\pi_2'', \ldots, \pi_s'')$ is a
partition of $\CA''$,  
$\R$ has to be onto and 
$\R(\pi_i) \cap \R(\pi_j) = \varnothing$ for $i \ne j$.

The following 
analogue of Terao's 
Addition-Deletion Theorem \ref{thm:add-del} for 
free arrangements for the class of 
nice arrangements is proved in 
\cite[Thm.\ 3.5]{hogeroehrle:factored}.

\begin{theorem}
\label{thm:add-del-factored}
Suppose that $\CA \ne \Phi_\ell$.
Let $\pi = (\pi_1, \ldots, \pi_s)$ be a  partition  of $\CA$.
Let $H_0 \in \pi_1$ and 
let $(\CA, \CA', \CA'')$ be the triple associated with $H_0$. 
Then any two of the following statements imply the third:
\begin{itemize}
\item[(i)] $\pi$ is nice for $\CA$;
\item[(ii)] $\pi'$ is nice for $\CA'$;
\item[(iii)] $\R: \CA \setminus \pi_1 \to \CA''$ 
is bijective and $\pi''$ is nice for $\CA''$.
\end{itemize}
\end{theorem}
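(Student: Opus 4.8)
The plan is to establish the three implications separately; in each case two of the conditions (i)--(iii) are used to verify, for the remaining partition, the two requirements of Definition~\ref{def:factored}: independence, and the existence of a singleton block in every induced partition $\pi_X$ with $X\ne V$. The main combinatorial engine is Corollary~\ref{cor:teraofactored}(iii): once a partition is \emph{known} to be nice, the rank of an element $X$ in the ambient lattice equals the number of its blocks meeting $\CA_X$. A recurring device is the dichotomy, for $X\in L(\CA)\setminus\{V\}$, according to whether $X\subseteq H_0$. If $H_0\notin\CA_X$, then $\CA_X\subseteq\CA'$, so $X=\bigcap\CA_X\in L(\CA')$ and the induced partitions $\pi_X$ and $\pi'_X$ literally coincide. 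If $H_0\in\CA_X$ but $X\ne H_0$, then $(\CA')_X=\CA_X\setminus\{H_0\}\ne\varnothing$, whence $X\in L(\CA')\cap L(\CA'')$, one has $(\CA'')_X=\{H_0\cap H\mid H\in(\CA')_X\}$, and --- once $\R$ is known to be bijective --- for each $i\ge 2$ the map $\R$ carries $\pi_i\cap(\CA')_X$ bijectively onto $\pi_i''\cap(\CA'')_X$. Finally $X=H_0$ is trivial, since $\CA_X=\{H_0\}$. One further elementary fact is used constantly: every $Z\in\CA''$ is of the form $Z=H_0\cap H$ with $H\in\CA'$, $H\ne H_0$, and hence $r(Z)=2$ in $\CA$.

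Granting bijectivity of $\R$, the implications $(\mathrm{i})\wedge(\mathrm{iii})\Rightarrow(\mathrm{ii})$ and $(\mathrm{ii})\wedge(\mathrm{iii})\Rightarrow(\mathrm{i})$ are the easier ones. Independence of $\pi$ (resp.\ $\pi'$) follows from independence of $\pi'$ (resp.\ $\pi$) together with independence of $\pi''$: for a transversal $H_0\in\pi_1$, $H_i\in\pi_i$ ($i\ge 2$), if $H_2\cap\dots\cap H_s\subseteq H_0$ then this subspace lies in $L(\CA'')$, all $s-1$ blocks of $\pi''$ meet its localization in $\CA''$, yet its rank in $\CA''$ is $s-2$, contradicting Corollary~\ref{cor:teraofactored}(iii) for $\pi''$. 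For the singleton-block condition one transports a singleton block across the dichotomy above. The only subtle point is when $X\subsetneq H_0$ and the unique singleton block of $\pi_X$ is $\{H_0\}=\pi_1\cap\CA_X$, which disappears upon deleting $H_0$; there one invokes niceness of $\pi''$ at $X\in L(\CA'')\setminus\{H_0\}$ to obtain a singleton block $\pi_j''\cap(\CA'')_X$ with $j\ge 2$ and pulls it back through $\R$ to a singleton block of $\pi'_X$. The mirror argument yields (i) from (ii) and (iii).

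The substantive implication is $(\mathrm{i})\wedge(\mathrm{ii})\Rightarrow(\mathrm{iii})$, and I expect injectivity of $\R$ to be the main obstacle, as it is the one step genuinely requiring both hypotheses at once. \emph{Surjectivity:} for $Z\in\CA''$ we have $r(Z)=2$, so by Corollary~\ref{cor:teraofactored}(iii) exactly two blocks of $\pi$ meet $\CA_Z$; one is $\pi_1$, since $H_0\in\CA_Z$; any hyperplane in the other block and in $\CA_Z$ lies in $\CA\setminus\pi_1$ and maps onto $Z$. \emph{Injectivity:} suppose $H',H''\in\CA\setminus\pi_1$ are distinct with $H'\cap H_0=H''\cap H_0=Z$. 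Then $\{H_0,H',H''\}\subseteq\CA_Z$ and $r(Z)=2$, so by Corollary~\ref{cor:teraofactored}(iii) for $\pi$ the hyperplanes $H',H''$ lie in a common block $\pi_j\ne\pi_1$, whence $\lvert\pi_j\cap\CA_Z\rvert\ge 2$ and no third block meets $\CA_Z$. Since $Z=H'\cap H''\in L(\CA')$ with $r_{\CA'}(Z)=2$, applying Corollary~\ref{cor:teraofactored}(iii) to $\pi'$ forces the block $\pi_1\setminus\{H_0\}$ also to meet $(\CA')_Z=\CA_Z\setminus\{H_0\}$, so $\lvert\pi_1\cap\CA_Z\rvert\ge 2$. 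But then both nonempty blocks of $\pi_Z$ have size at least $2$, contradicting the singleton-block property of the nice partition $\pi$ at $Z$. \emph{Remaining properties of $\pi''$:} independence is transported from that of $\pi$ via $\R^{-1}$ as above, and the singleton-block property of $\pi''$ at $X\in L(\CA'')\setminus\{H_0\}$ is transported from that of $\pi'$ at $X$ through the block-preserving bijection induced by $\R$, with the same fallback --- when the relevant singleton block of $\pi'_X$ sits in $\pi_1\setminus\{H_0\}$, replace it using niceness of $\pi$ at $X$, whose singleton block then cannot be $\pi_1\cap\CA_X$ and so lies in some $\pi_j$ with $j\ge 2$.

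Apart from injectivity of $\R$, the bookkeeping point requiring care throughout is the degenerate case $\pi_1=\{H_0\}$: then $\CA\setminus\pi_1=\CA'$, the partition $\pi'=(\pi_2,\dots,\pi_s)$ has $s-1$ rather than $s$ blocks, $H_0$ is a separator of $\CA$, and the rank counts from Corollary~\ref{cor:teraofactored} must be read with the shifted number of blocks. Every step above survives this adjustment, but it should be flagged explicitly rather than swept aside.
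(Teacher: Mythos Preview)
The paper does not prove Theorem~\ref{thm:add-del-factored}; it is quoted from \cite[Thm.\ 3.5]{hogeroehrle:factored}. So there is no in-paper argument to compare against, and I comment only on the correctness of your sketch.

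There is a genuine gap in your recurring dichotomy. You assert that if $X\in L(\CA)$ with $H_0\in\CA_X$ and $X\ne H_0$, then $X\in L(\CA')\cap L(\CA'')$. The membership $X\in L(\CA'')$ is correct, but $X\in L(\CA')$ can fail: the nonemptiness of $(\CA')_X=\CA_X\setminus\{H_0\}$ does not force $X=\bigcap(\CA')_X$. For a concrete instance, take any $Z\in\CA''$ with $\CA_Z=\{H_0,H\}$ for a single $H\in\CA'\setminus\pi_1$; then $(\CA')_Z=\{H\}$, $\bigcap(\CA')_Z=H\supsetneq Z$, and $Z\notin L(\CA')$. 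This configuration is perfectly compatible with both (i) and (ii).

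This slip affects two places. In $(\mathrm{i})\wedge(\mathrm{ii})\Rightarrow(\mathrm{iii})$, when verifying the singleton property of $\pi''$ at $X\in L(\CA'')\setminus\{H_0\}$ you invoke ``niceness of $\pi'$ at $X$'', which is undefined if $X\notin L(\CA')$. The repair is to argue via $Y:=\bigcap(\CA')_X\in L(\CA')$: one has $(\CA')_Y=(\CA')_X$, and in the problematic case $\pi_1\cap\CA_X=\{H_0\}$ one finds $(\pi_1\setminus\{H_0\})\cap(\CA')_Y=\varnothing$, so the singleton of $(\pi')_Y$ supplied by (ii) lies in some $\pi_j$ with $j\ge 2$ and transports to a singleton of $(\pi'')_X$ through $\R$ exactly as you intend. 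Likewise, your ``mirror argument'' for $(\mathrm{ii})\wedge(\mathrm{iii})\Rightarrow(\mathrm{i})$ would attempt to use $\pi'$ at $X\in L(\CA)\setminus L(\CA')$ when $X\subsetneq H_0$; but here no fallback is needed at all, since $X\in L(\CA'')$ and your own observation that $\R$ induces bijections $\pi_j\cap\CA_X\to\pi_j''\cap(\CA'')_X$ for $j\ge 2$ lets the singleton from (iii) pass directly to $\pi_X$. Apart from this lattice-membership issue, your treatment of independence and of the bijectivity of $\R$ is sound.
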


Note the bijectivity condition on $\R$ 
in Theorem \ref{thm:add-del-factored}
is necessary, 
cf.~\cite[Ex.\ 3.3]{hogeroehrle:factored}.
Theorem \ref{thm:add-del-factored} 
motivates
the following stronger notion of factorization, 
cf.\ \cite{jambuparis:factored}, \cite[Def.\ 3.8]{hogeroehrle:factored}.

\begin{defn} 
\label{def:indfactored}
The class $\CIFAC$ of \emph{inductively factored} arrangements 
is the smallest class of pairs $(\CA, \pi)$ of 
arrangements $\CA$ together with a partition $\pi$
subject to
\begin{itemize}
\item[(i)] 
$(\Phi_\ell, (\varnothing)) \in \CIFAC$ for each $\ell \ge 0$;
\item[(ii)] 
if there exists a partition $\pi$ of $\CA$ 
and a hyperplane $H_0 \in \pi_1$ such that 
for the triple $(\CA, \CA', \CA'')$ associated with $H_0$ 
the restriction map $\R = \R_{\pi, H_0} : \CA \setminus \pi_1 \to \CA''$ 
is bijective and for the induced partitions $\pi'$ of $\CA'$ and 
$\pi''$ of $\CA''$ 
both $(\CA', \pi')$ and $(\CA'', \pi'')$ belong to $\CIFAC$, 
then $(\CA, \pi)$ also belongs to $\CIFAC$.
\end{itemize}
If $(\CA, \pi)$ is in $\CIFAC$, then we say that
$\CA$ is \emph{inductively factored with respect to $\pi$}, or else
that $\pi$ is an \emph{inductive factorization} of $\CA$. 
Frequently, we simply say $\CA$ is \emph{inductively factored} without 
reference to a specific inductive factorization of $\CA$.
\end{defn}

In \cite[Prop.\ 3.30]{hogeroehrle:factored},
Proposition \ref{prop:product-factored}
was strengthened  
further by showing that 
the compatibility with products restricts 
to the class of inductively factored arrangements.

\begin{proposition}
\label{prop:product-indfactored}
Let $\CA_1, \CA_2$ be two arrangements.
Then  $\CA = \CA_1 \times \CA_2$ is 
inductively factored if and only if both 
$\CA_1$ and $\CA_2$ are 
inductively factored  and in that case
the multiset of exponents of $\CA$ is given by 
$\exp \CA = \{\exp \CA_1, \exp \CA_2\}$.
\end{proposition}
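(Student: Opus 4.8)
The statement to prove is Proposition~\ref{prop:product-indfactored}: that $\CA = \CA_1\times\CA_2$ is inductively factored if and only if both factors are, and in that case $\exp\CA = \{\exp\CA_1,\exp\CA_2\}$.

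\textbf{Plan of proof.} The plan is to proceed by induction on the cardinality $|\CA| = |\CA_1| + |\CA_2|$, using Theorem~\ref{thm:add-del-factored} as the engine and Proposition~\ref{prop:product-factored} together with the product rule~\eqref{eq:restrproduct} to track the combinatorial data. The base case is when $\CA$ is empty, where both directions are trivial by clause~(i) of Definition~\ref{def:indfactored} and the exponent claim is vacuous (all exponents zero). For the inductive step I would argue in both directions.

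\textbf{Forward direction.} Suppose $(\CA,\pi)\in\CIFAC$ witnessed by a hyperplane $H_0\in\pi_1$ so that, for the triple $(\CA,\CA',\CA'')$ at $H_0$, the restriction map $\R_{\pi,H_0}$ is bijective and $(\CA',\pi')$, $(\CA'',\pi'')$ lie in $\CIFAC$. Without loss $H_0 = H_1^0\oplus V_2$ for some $H_1^0\in\CA_1$. Then $\CA' = \CA_1'\times\CA_2$ where $\CA_1' = \CA_1\setminus\{H_1^0\}$, and by~\eqref{eq:restrproduct} applied to $X = H_0$ one gets $\CA'' = \CA^{H_0} = \CA_1^{H_1^0}\times\CA_2 = \CA_1''\times\CA_2$. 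Moreover the partition $\pi$ restricts compatibly: $\pi'$ is a product partition of $\CA_1'\times\CA_2$ and $\pi''$ of $\CA_1''\times\CA_2$ in the obvious sense, and I would check that the bijectivity of $\R_{\pi,H_0}$ on $\CA$ is equivalent to bijectivity of the corresponding restriction map on the $\CA_1$-side. Since $|\CA'|, |\CA''| < |\CA|$, the inductive hypothesis applies: $\CA_1'$, $\CA_1''$ (and $\CA_2$) are inductively factored with the stated exponent behaviour. Then Definition~\ref{def:indfactored}(ii) applied on the $\CA_1$-side with $H_1^0$ shows $\CA_1$ is inductively factored, and Proposition~\ref{prop:product-factored} (or rather its inductive refinement being built here) handles $\CA_2$ trivially since $\CA_2$ is already a piece of the product. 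The exponent statement then follows by combining $\exp\CA'=\{\exp\CA_1',\exp\CA_2\}$, $\exp\CA''=\{\exp\CA_1'',\exp\CA_2\}$, the exponent shift in Theorem~\ref{thm:add-del} (applied via Proposition~\ref{prop:indfactoredindfree}, since inductively factored implies inductively free, hence free), and the addition-deletion bookkeeping for $\CA_1$.

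\textbf{Reverse direction.} Suppose $\CA_1$ and $\CA_2$ are inductively factored. If $\CA_1 = \Phi_{\ell_1}$ then $\CA = \Phi_{\ell_1}\times\CA_2 \cong \Phi_{\ell_1}\times\CA_2$, and since adjoining a null arrangement $\Phi_{\ell_1}$ does not change membership in $\CIFAC$ (one can see this by induction on $|\CA_2|$, or treat it as a trivial base reduction), we are done; similarly if $\CA_2$ is empty. Otherwise pick an inductive factorization $\pi^{(1)} = (\pi_1^{(1)},\ldots)$ of $\CA_1$ with witnessing hyperplane $H_1^0\in\pi_1^{(1)}$, so $\CA_1'$ and $\CA_1''$ are inductively factored with the induced partitions. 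Form the product partition $\pi$ on $\CA$ whose first block is $\pi_1^{(1)}\times\{V_2\text{-part}\}$ suitably interpreted, together with the blocks coming from $\CA_2$; concretely $\pi$ is the partition of $\CA$ whose blocks are $\{H\oplus V_2 : H\in\pi_j^{(1)}\}$ for each $j$ and $\{V_1\oplus H : H\in\pi_k^{(2)}\}$ for each block $\pi_k^{(2)}$ of the chosen factorization of $\CA_2$. Taking $H_0 = H_1^0\oplus V_2\in\pi_1$, the triple $(\CA,\CA',\CA'')$ decomposes as $\CA' = \CA_1'\times\CA_2$ and $\CA'' = \CA_1''\times\CA_2$ by~\eqref{eq:restrproduct}; the restriction map $\R_{\pi,H_0}$ is bijective because its $\CA_1$-component is (by choice of $\pi^{(1)}$) and it acts as the identity on the $\CA_2$-coordinate; and $\pi'$, $\pi''$ are the corresponding product partitions, which by the inductive hypothesis make $(\CA',\pi')$ and $(\CA'',\pi'')$ members of $\CIFAC$ — here I use that $\CA_1'$ and $\CA_1''$ are inductively factored and $|\CA'|,|\CA''|<|\CA|$. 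Hence $(\CA,\pi)\in\CIFAC$ by Definition~\ref{def:indfactored}(ii). The exponent formula drops out of the same induction.

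\textbf{Main obstacle.} The delicate point is the bookkeeping that a product partition restricts to a product partition under deletion and restriction at a hyperplane of the form $H_1^0\oplus V_2$, and in particular that the bijectivity of the restriction map $\R$ on $\CA$ is \emph{equivalent} to bijectivity of the restriction map on $\CA_1$ alone (the $\CA_2$-direction contributing only an identity factor). One must be careful that $\pi_i\cap\CA' = \pi_i$ for the untouched blocks, that the blocks of $\pi$ coming from $\CA_2$ survive unchanged in both $\CA'$ and $\CA''$, and that $\R$ does not accidentally merge a $\CA_1$-block with a $\CA_2$-block. Handling the degenerate cases where one factor is a null arrangement $\Phi_m$ cleanly — showing $\Phi_m\times\CB\in\CIFAC \iff \CB\in\CIFAC$ — is a small separate lemma-style argument that I would fold into the induction. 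Once this structural compatibility is in place, everything else is a direct application of Theorem~\ref{thm:add-del-factored} and the already-known product compatibility of freeness/niceness.
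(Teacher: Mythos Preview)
The paper does not prove Proposition~\ref{prop:product-indfactored} itself; it is quoted from \cite[Prop.~3.30]{hogeroehrle:factored} without argument. So there is no in-paper proof to compare against, and your proposal must be judged on its own.

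Your overall strategy---induction on $|\CA|$ via Theorem~\ref{thm:add-del-factored} and the product rule~\eqref{eq:restrproduct}---is the natural one, but there is a genuine gap in both directions arising from your inductive hypothesis being too weak. The proposition as stated is an \emph{existence} statement: $\CA$ is inductively factored means $(\CA,\pi)\in\CIFAC$ for \emph{some} $\pi$. In the forward direction you apply the inductive hypothesis to $\CA'=\CA_1'\times\CA_2$ and deduce that $\CA_1'$ is inductively factored; likewise for $\CA_1''$. But to invoke Definition~\ref{def:indfactored}(ii) for $\CA_1$ at $H_1^0$, you must exhibit a partition $\pi^{(1)}$ of $\CA_1$ such that the \emph{specific} induced partitions $(\pi^{(1)})'$ and $(\pi^{(1)})''$ lie in $\CIFAC$---mere existence of \emph{some} inductive factorizations of $\CA_1'$ and $\CA_1''$ is not enough. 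The same problem recurs in the reverse direction: you assert that ``by the inductive hypothesis $(\CA',\pi')$ and $(\CA'',\pi'')$ are members of $\CIFAC$'', but the inductive hypothesis only tells you $\CA'$ and $\CA''$ are inductively factored for \emph{some} partition, not for the particular product partitions $\pi',\pi''$ you have constructed.

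The fix is to strengthen what you induct on: prove that for a product partition $\pi^{(1)}\cup\pi^{(2)}$ one has $(\CA_1\times\CA_2,\pi^{(1)}\cup\pi^{(2)})\in\CIFAC$ if and only if $(\CA_i,\pi^{(i)})\in\CIFAC$ for $i=1,2$. With this partition-level statement as inductive hypothesis your bookkeeping goes through. You also need, for the forward direction, the supporting fact (which you assume ``in the obvious sense'' but never argue) that every nice partition of a product is already a product partition; this follows from Corollary~\ref{cor:teraofactored}(i) and (iii): if $a$, $b$, $c$ count the blocks containing only $\CA_1$-hyperplanes, only $\CA_2$-hyperplanes, and both, then applying (iii) to $T_{\CA_1}\oplus V_2$ and $V_1\oplus T_{\CA_2}$ gives $a+c=r_1$ and $b+c=r_2$, while (i) gives $a+b+c=r_1+r_2$, forcing $c=0$.
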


The connection with the previous notions is as follows.

\begin{proposition}
[{\cite[Prop.\ 3.11]{hogeroehrle:factored}}]
\label{prop:superindfactored}
If $\CA$ is supersolvable, then $\CA$ is inductively factored.
\end{proposition}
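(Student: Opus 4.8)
The statement to prove is Proposition \ref{prop:superindfactored}: if $\CA$ is supersolvable, then $\CA$ is inductively factored.

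\medskip

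The plan is to argue by induction on the rank $r = r(\CA)$, exploiting the fact that a supersolvable arrangement of positive rank always possesses a modular coatom through which one can peel off hyperplanes one at a time. First I would dispose of the base cases: if $r = 0$ then $\CA = \Phi_\ell$ for some $\ell$, which lies in $\CIFAC$ with the empty partition by Definition \ref{def:indfactored}(i); if $r = 1$ then $\CA$ consists of parallel copies of a single hyperplane (after splitting off an empty factor) and the one-block partition $\pi = (\CA)$ is easily seen to be an inductive factorization, built up from $\Phi_{\ell-1} \times \{H\}$ by repeated additions in $\pi_1$ with empty restriction. So assume $r \ge 2$ and that the result holds for all supersolvable arrangements of smaller rank.

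\medskip

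Now let $V = X_0 < X_1 < \ldots < X_{r-1} < X_r = T_\CA$ be a maximal chain of modular elements in $L(\CA)$, and recall from Proposition \ref{prop:ssfactored} that $\pi_i := \CA_{X_i} \setminus \CA_{X_{i-1}}$ for $1 \le i \le r$ gives a nice partition $\pi = (\pi_1, \ldots, \pi_r)$ of $\CA$. The key point is that $Y := X_{r-1}$ is a modular coatom, and $\CA_Y = \pi_1 \cup \ldots \cup \pi_{r-1}$ while $\pi_r = \CA \setminus \CA_Y$. Pick any $H_0 \in \pi_r$ and form the triple $(\CA, \CA', \CA'')$ with $\CA' = \CA \setminus \{H_0\}$ and $\CA'' = \CA^{H_0}$. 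Here I want to show three things. First, $\CA'$ is again supersolvable with $V = X_0 < \ldots < X_{r-1} < T_{\CA'}$ still a maximal chain of modular elements (deleting $H_0$ from $\pi_r$ does not disturb modularity of the $X_i$ for $i \le r-1$, and one checks the top step remains modular or, if $\pi_r$ becomes a singleton after several deletions, is handled separately); by the inductive setup on the number of hyperplanes, $\CA'$ is inductively factored, with induced partition $\pi'$ obtained by removing $H_0$ from $\pi_r$. Second, by the standard properties of modular coatoms — this is where I would invoke the well-known fact that restriction to a modular coatom behaves well, e.g.\ \cite[Cor.\ 2.26, Thm.\ 4.37]{orlikterao:arrangements} or a direct count via Corollary \ref{cor:teraofactored}(iii) — the restriction map $\R = \R_{\pi, H_0}\colon \CA \setminus \pi_r \to \CA''$ is a \emph{bijection}: injectivity because two hyperplanes of $\CA_Y$ meeting $H_0$ in the same subspace would force a rank drop incompatible with modularity, and surjectivity because every hyperplane of $\CA''$ is $H_0 \cap H$ for some $H \notin \CA_{H_0}$ and modularity of $Y$ lets one replace $H$ by a member of $\CA_Y$. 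Third, the induced partition $\pi''$ of $\CA''$ is precisely the image chain coming from the modular chain $V < X_1 < \ldots < X_{r-1}$ intersected appropriately; in fact $\CA''$ is supersolvable of rank $r-1$ (restriction of a supersolvable arrangement to a modular element is supersolvable), so by the induction hypothesis $\CA''$ is inductively factored, and one verifies the inductive factorization can be taken to be $\pi''$.

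\medskip

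Once these three facts are in place, Definition \ref{def:indfactored}(ii) applies directly with this choice of $\pi$ and $H_0$: both $(\CA', \pi')$ and $(\CA'', \pi'')$ lie in $\CIFAC$ and $\R$ is bijective, hence $(\CA, \pi) \in \CIFAC$, i.e.\ $\CA$ is inductively factored. I expect the main obstacle to be the bookkeeping in the first fact — namely verifying that after deleting a single hyperplane from the last block $\pi_r$ the arrangement $\CA'$ is still supersolvable with a compatible modular chain, and in particular handling cleanly the moment when $\pi_r$ shrinks to a singleton (at which stage the coatom $X_{r-1}$ must be ``used up'' and one continues peeling from $\pi_{r-1}$); this amounts to an inner induction on $|\pi_r|$. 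The bijectivity of $\R$ is really the heart of the matter but it is exactly the classical statement that underlies Terao's addition theorem for supersolvable arrangements, so I would lean on \cite[\S 2, \S 4]{orlikterao:arrangements} rather than reprove it, and then feed everything into Theorem \ref{thm:add-del-factored} or directly into Definition \ref{def:indfactored} to close the induction.
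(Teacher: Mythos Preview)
The paper does not prove this proposition: it is listed in the Recollections section with a bare citation to \cite[Prop.\ 3.11]{hogeroehrle:factored}, and no argument is supplied here. So there is no in-paper proof to compare your proposal against.

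That said, your sketch is the standard route and is essentially sound, with two points that would need tightening in an actual write-up. First, your inductive hypothesis must be formulated for \emph{pairs}: for every supersolvable $\CA$ with a chosen maximal modular chain, the specific partition $\pi$ of Proposition~\ref{prop:ssfactored} is an inductive factorization. Definition~\ref{def:indfactored} is about pairs $(\CA,\pi)$, so merely knowing that $\CA''$ admits \emph{some} inductive factorization is not enough; you need $(\CA'',\pi'') \in \CIFAC$, which is exactly what the strengthened hypothesis yields once you check that $\pi''$ is the partition associated to a modular chain in $L(\CA'')$. Second, your justification that $\CA''$ is supersolvable is misstated: $H_0$ is not a modular element of $L(\CA)$, so ``restriction to a modular element'' does not apply directly. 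The correct mechanism is that the bijection $\R\colon \CA_{X_{r-1}} \to \CA''$ you already invoke transports the modular chain $V < X_1 < \ldots < X_{r-1}$ of $\CA_{X_{r-1}}$ to a modular chain for $\CA''$ (this is the content behind \cite[Prop.\ 3.11]{hogeroehrle:factored} and the supersolvable addition theorem). With these two adjustments your outer induction on rank together with the inner induction on $|\pi_r|$ goes through.
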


\begin{proposition}
[{\cite[Prop.\ 2.2]{jambuparis:factored}, 
\cite[Prop.\ 3.14]{hogeroehrle:factored}}]
\label{prop:indfactoredindfree}
Let $\pi = (\pi_1, \ldots, \pi_r)$ be an inductive factorization of $\CA$. 
Then $\CA$ is inductively free with exponents 
$\exp \CA = \{0^{\ell-r}, |\pi_1|, \ldots, |\pi_r|\}$.
\end{proposition}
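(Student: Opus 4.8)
The plan is to argue by induction on the cardinality $|\CA|$, establishing simultaneously that $\CA$ is inductively free, that $\pi$ is in fact a factorization of $\CA$ (so that $r$ is genuinely the rank $r(\CA)$), and that $\exp\CA$ has the asserted shape. The base case $\CA = \Phi_\ell$ is immediate: then $r = 0$ and $\exp\Phi_\ell = \{0^\ell\}$, and $\Phi_\ell \in \CIF$ by Definition \ref{def:indfree}(i).

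For the inductive step, let $(\CA,\pi) \in \CIFAC$ with $\CA \ne \Phi_\ell$. By Definition \ref{def:indfactored}(ii) there are a partition $\pi = (\pi_1,\ldots,\pi_s)$ of $\CA$ and a hyperplane $H_0 \in \pi_1$ such that, for the triple $(\CA,\CA',\CA'')$ associated with $H_0$, the restriction map $\R = \R_{\pi,H_0} : \CA \setminus \pi_1 \to \CA''$ is bijective and, for the induced partitions $\pi'$ of $\CA'$ and $\pi'' = (\pi_2'',\ldots,\pi_s'')$ of $\CA''$, both $(\CA',\pi')$ and $(\CA'',\pi'')$ belong to $\CIFAC$. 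Since $|\CA'| = |\CA| - 1$ and $|\CA''| = |\CA \setminus \pi_1| \le |\CA| - 1$, the inductive hypothesis applies to both: $\CA'$, $\CA''$ are inductively free, and $\pi'$, $\pi''$ are factorizations of $\CA'$, $\CA''$. By Theorem \ref{thm:add-del-factored} (the implication ``(ii) and (iii) $\Rightarrow$ (i)'') it follows that $\pi$ is a factorization of $\CA$, whence $s = r = r(\CA)$ by Corollary \ref{cor:teraofactored}(i); likewise $r(\CA'') = s - 1 = r - 1$, and $r(\CA') = r'$, where $r' = |\{i \mid \pi_i \cap \CA' \ne \varnothing\}|$ equals $r$ if $|\pi_1| \ge 2$ and $r - 1$ if $|\pi_1| = 1$.

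It then remains to read off the exponents and feed them into Terao's Addition-Deletion Theorem \ref{thm:add-del}. Because $\pi_1' = \pi_1 \setminus \{H_0\}$ and $\pi_i' = \pi_i$ for $i \ge 2$, the inductive hypothesis for $\CA'$ gives $\exp\CA' = \{0^{\ell-r},|\pi_1|-1,|\pi_2|,\ldots,|\pi_r|\}$ if $|\pi_1| \ge 2$ and $\exp\CA' = \{0^{\ell-r+1},|\pi_2|,\ldots,|\pi_r|\}$ if $|\pi_1| = 1$; since $|\pi_i''| = |\R(\pi_i)| = |\pi_i|$ for $i \ge 2$ by the bijectivity of $\R$, the inductive hypothesis for $\CA''$ gives $\exp\CA'' = \{0^{\ell-r},|\pi_2|,\ldots,|\pi_r|\}$. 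In both cases $\exp\CA' = \exp\CA'' \uplus \{|\pi_1|-1\}$ as multisets, so in particular $\exp\CA'' \subseteq \exp\CA'$. Writing $\exp\CA'' = \{b_1,\ldots,b_{\ell-1}\}$ and $b_\ell := |\pi_1|$, this says precisely that $\CA'$ is free with exponents $\{b_1,\ldots,b_{\ell-1},b_\ell-1\}$ and $\CA''$ is free with exponents $\{b_1,\ldots,b_{\ell-1}\}$, so Theorem \ref{thm:add-del} shows $\CA$ is free with $\exp\CA = \{b_1,\ldots,b_{\ell-1},b_\ell\} = \exp\CA'' \uplus \{|\pi_1|\} = \{0^{\ell-r},|\pi_1|,|\pi_2|,\ldots,|\pi_r|\}$; and since $\CA',\CA'' \in \CIF$ with $\exp\CA'' \subseteq \exp\CA'$, Definition \ref{def:indfree}(ii) yields $\CA \in \CIF$, closing the induction.

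The argument is mostly routine bookkeeping; the one spot that needs a little care is the degenerate case $|\pi_1| = 1$, in which deleting $H_0$ erases an entire block of $\pi$ and hence drops both the number of blocks and the rank by one — there one must check that $\exp\CA' = \exp\CA'' \uplus \{0\}$ still fits the pattern required by Theorem \ref{thm:add-del} and produces the exponent $|\pi_1| = 1$. Alternatively, one can avoid carrying the exponent formula through the induction altogether by appealing to Remark \ref{rem:exponents}: once $\CA$ is known to be inductively free and $\pi$ to be a factorization, $\exp\CA = \{0^{\ell-r},|\pi_1|,\ldots,|\pi_r|\}$ is forced by the factorization of the Poincar\'e polynomials, so the induction would only need to propagate inductive freeness together with the identity of block sizes coming from the bijectivity of $\R$.
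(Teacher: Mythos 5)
Your argument is correct, including the careful treatment of the degenerate case $|\pi_1|=1$ where a block of $\pi$ disappears upon deletion. The paper itself gives no proof of this proposition but imports it from \cite{jambuparis:factored} and \cite[Prop.\ 3.14]{hogeroehrle:factored}, and your simultaneous induction on $|\CA|$ --- running Theorem \ref{thm:add-del-factored} and Terao's Theorem \ref{thm:add-del} in tandem to propagate niceness, inductive freeness and the exponent formula through the triple $(\CA,\CA',\CA'')$ --- is essentially the argument given in those references, so nothing further is needed.
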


\begin{remark}
\label{rem:indfactable}
In analogy to inductively free arrangements, for
inductively factored arrangements one 
can present a so called induction table of factorizations, 
cf.~\cite[Rem.\ 3.16]{hogeroehrle:factored}.

If $\CA$ is inductively factored, then  $\CA$ is inductively free,
by Proposition \ref{prop:indfactoredindfree}.
The latter can be described by a so called 
\emph{induction table}, cf.~\cite[\S 4.3, p.~119]{orlikterao:arrangements}.
In this process we start with an inductively free arrangement
and add hyperplanes successively ensuring that 
part (ii) of Definition \ref{def:indfree} is satisfied.
This process is referred to as \emph{induction of hyperplanes}.
This procedure amounts to 
choosing a total order on $\CA$, say 
$\CA = \{H_1, \ldots, H_n\}$, 
so that each of the subarrangements 
$\CA_0 := \Phi_\ell$, $\CA_i := \{H_1, \ldots, H_i\}$
and each of the restrictions $\CA_i^{H_i}$ is inductively free
for $i = 1, \ldots, n$.
In the associated induction table we record in the $i$-th row the information 
of the $i$-th step of this process, by 
listing $\exp \CA_i' = \exp \CA_{i-1}$, 
the defining form $\alpha_{H_i}$ of $H_i$, 
as well as $\exp \CA_i'' = \exp \CA_i^{H_i}$, 
for $i = 1, \ldots, n$.

The proof of Proposition \ref{prop:indfactoredindfree} 
shows 
that if $\pi$ is an inductive factorization of $\CA$
and $H_0 \in \CA$ is distinguished with respect to $\pi$, then 
the triple $(\CA, \CA', \CA'')$ with respect to $H_0$ 
is a triple of inductively free arrangements.
Thus an induction table of $\CA$ can be constructed,
compatible with suitable inductive factorizations of 
the subarrangements $\CA_i$.

Let $\CA = \{H_1, \ldots, H_n\}$ be a 
choice of a total order on $\CA$.
Then, starting with the empty partition for $\Phi_\ell$, we can attempt to
build inductive factorizations $\pi_i$ of $\CA_i$
consecutively, resulting in an inductive factorization 
$\pi = \pi_n$ of $\CA = \CA_n$.
This is achieved by invoking Theorem \ref{thm:add-del-factored}
repeatedly in order to derive that each $\pi_i$ is an inductive factorization of $\CA_i$.

We then add the inductive factorizations $\pi_i$
of $\CA_i$ as additional data into an induction table for $\CA$
(or else record to which part of $\pi_{i-1}$ the new hyperplane $H_i$ is appended to).
The data in such an extended induction table 
together with the ``Addition'' part of Theorem \ref{thm:add-del-factored}
then proves that $\CA$ is 
inductively factored. 
We refer to this technique as \emph{induction of factorizations} and the
corresponding table as an \emph{induction table of factorizations} for $\CA$.
See Table \ref{table0} for an example.
\end{remark}

\begin{defn}
\label{def:heredindfactored}
In analogy to hereditary freeness and hereditary inductive freeness, 
\cite[Def.\ 4.140, p.\ 253]{orlikterao:arrangements}, 
we say that $\CA$ is \emph{hereditarily factored}
provided $\CA^X$ is factored for every $X \in L(\CA)$
and that $\CA$ is \emph{hereditarily inductively factored}
provided $\CA^X$ is inductively factored for every $X \in L(\CA)$.
\end{defn}

We recall a useful fact from \cite[Lem.\ 3.27]{hogeroehrle:factored}.

\begin{lemma}
\label{lem:3-arr}
Suppose that $\ell = 3$. Then 
$\CA$ is (inductively) factored if and only if it is 
hereditarily (inductively) factored.
\end{lemma}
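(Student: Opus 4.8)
The plan is to exploit the rank-$3$ hypothesis to severely restrict the possible restrictions $\CA^X$: if $\ell=3$, then for $X\in L(\CA)$ we have $\dim X\in\{0,1,2,3\}$, and the restricted arrangements are correspondingly trivial or very low-dimensional. First I would dispose of the degenerate cases. If $X=V$, then $\CA^X=\CA$ and there is nothing to prove; if $\dim X=0$, then $X=T_\CA$ and $\CA^X=\Phi_0$, which is (inductively) factored via the empty partition; and if $\dim X=1$, then $\CA^X$ is a $1$-arrangement consisting of the single point $X$, hence supersolvable and thus inductively factored by Proposition \ref{prop:superindfactored}. This reduces the entire statement to the case $\dim X = 2$.

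Next I would handle $\dim X = 2$. Here $\CA^X$ is a central arrangement of rank at most $2$ in a $2$-dimensional space, i.e.\ a collection of lines through the origin in a plane. The key observation is that \emph{every} central $2$-arrangement is supersolvable: any hyperplane (line) $X_1$ in $L(\CA^X)$ is modular, since for any other $Y\in L(\CA^X)$ the sum $X_1+Y$ is either $X_1$, $Y$, or the whole plane, all of which lie in $L(\CA^X)$; so $V > X_1 > T_{\CA^X}$ (or the appropriate shorter chain if the rank is smaller) is a maximal chain of modular elements. Hence $\CA^X$ is supersolvable, and therefore inductively factored by Proposition \ref{prop:superindfactored}, a fortiori factored. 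Combining the cases: for every $X\in L(\CA)$, the restriction $\CA^X$ is inductively factored as soon as $\dim X\le 2$, and equals $\CA$ when $X=V$.

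It remains to assemble these observations into the biconditional. The forward direction is immediate: if $\CA$ is hereditarily (inductively) factored then in particular $\CA=\CA^V$ is (inductively) factored. For the converse, suppose $\CA$ is (inductively) factored; we must show $\CA^X$ is (inductively) factored for every $X\in L(\CA)$. If $X=V$ this is the hypothesis; if $\dim X\le 2$ this is the preceding paragraph, which gives the stronger conclusion that $\CA^X$ is inductively factored (hence also factored) with no hypothesis on $\CA$ needed at all. Since $\ell=3$ forces $\dim X\in\{0,1,2,3\}$, these cases are exhaustive, so $\CA$ is hereditarily (inductively) factored, and the two versions of the statement follow simultaneously.

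The main obstacle, such as it is, is simply to be careful about the trivial and low-rank edge cases and about the fact that $\CA^X$ need not be essential (its rank may be strictly less than $\dim X$); but the modularity argument for $2$-arrangements is robust to this, so there is no real difficulty. One could alternatively appeal directly to the classification of rank-$\le 2$ arrangements as supersolvable rather than re-proving modularity, but giving the one-line modularity check keeps the argument self-contained.
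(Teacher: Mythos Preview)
Your proof is correct. The paper does not actually provide its own proof of this lemma; it merely recalls it as a known fact from \cite[Lem.\ 3.27]{hogeroehrle:factored}. Your argument---observing that for $\ell=3$ every proper restriction $\CA^X$ lives in a space of dimension at most $2$ and is therefore supersolvable, hence inductively factored by Proposition \ref{prop:superindfactored}---is precisely the natural elementary proof, and almost certainly the one given in the cited reference.
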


By Remark \ref{rem:factored}, the class of 
factored arrangements
is closed under taking localizations.
This feature descends to the class of 
inductively factored arrangements
and its hereditary subclass.

\begin{theorem}
[{\cite[Thm.\ 1.1, Rem.\ 3.7]{moellerroehrle:factored}}]
\label{thm:factored-localization}
The class of (hereditarily) inductively factored arrangements
is closed under taking localizations.
\end{theorem}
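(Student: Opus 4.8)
The plan is to prove the stronger assertion that if $\pi=(\pi_1,\dots,\pi_r)$ is an inductive factorization of $\CA$ and $X\in L(\CA)$, then the induced partition $\pi_X$ of $\CA_X$ (whose blocks are the non-empty sets $\pi_i\cap\CA_X$) is again an inductive factorization; the asserted closure of the class of inductively factored arrangements under localization follows at once, the argument being the inductively factored analogue of the one behind Remark \ref{rem:factored}. I would proceed by induction on $|\CA|$, the case $\CA=\Phi_\ell$ and the case $X=V$ being trivial. So let $H_0\in\pi_1$ be as in Definition \ref{def:indfactored}(ii), with triple $(\CA,\CA',\CA'')$, bijective restriction map $\R_{\pi,H_0}\colon\CA\setminus\pi_1\to\CA''$, and induced inductive factorizations $\pi'$ of $\CA'$ and $\pi''$ of $\CA''$, and fix $X\in L(\CA)\setminus\{V\}$. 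If $H_0\notin\CA_X$, then $X\in L(\CA')$, $\CA_X=(\CA')_X$ and $\pi_X=(\pi')_X$, so the induction hypothesis for $\CA'$ (which has fewer hyperplanes) settles this case.

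Now suppose $H_0\in\CA_X$, i.e.\ $X\subseteq H_0$; the case $\CA_X=\{H_0\}$ is clear, so assume $\CA_X\setminus\{H_0\}\neq\varnothing$, whence $X=\bigcap_{H\in\CA_X\setminus\{H_0\}}(H_0\cap H)\in L(\CA'')$. The triple of $\CA_X$ associated with $H_0$ is $\bigl(\CA_X,\ \CA_X\setminus\{H_0\},\ (\CA_X)^{H_0}\bigr)$, and I would first record the identities $\CA_X\setminus\{H_0\}=(\CA')_{X'}$, where $X':=\bigcap_{H\in\CA_X\setminus\{H_0\}}H\in L(\CA')$, and $(\CA_X)^{H_0}=(\CA'')_X$; the latter follows from $\{H\in\CA':X\subseteq H\}=\CA_X\setminus\{H_0\}$ together with $X\subseteq H_0$. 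The crucial point, which I expect to be the main obstacle, is the bijectivity of the restriction map attached to $\pi_X$ and $H_0$, namely $\R_{\pi_X,H_0}\colon\CA_X\setminus(\pi_1\cap\CA_X)\to(\CA_X)^{H_0}$: injectivity is inherited from $\R_{\pi,H_0}$ because $\CA_X\setminus(\pi_1\cap\CA_X)=\CA_X\cap(\CA\setminus\pi_1)$, while surjectivity holds since for $H\in\CA_X\setminus\{H_0\}$ we have $H_0\cap H\in\CA''$, so surjectivity of $\R_{\pi,H_0}$ gives $H''\in\CA\setminus\pi_1$ with $H_0\cap H''=H_0\cap H$, and then $X\subseteq H_0\cap H\subseteq H''$ places $H''$ in $\CA_X\setminus(\pi_1\cap\CA_X)$. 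What must be handled with care here is that \emph{all} hyperplanes of $\pi_1\cap\CA_X$, not merely $H_0$, have to be deleted from the domain, since the naive map $H\mapsto H_0\cap H$ on $\CA_X\setminus\{H_0\}$ need not be injective. Granting this, one checks directly from the definitions that the partitions induced by $\pi_X$ on $\CA_X\setminus\{H_0\}$ and on $(\CA_X)^{H_0}$ coincide with $(\pi')_{X'}$ and $(\pi'')_X$ respectively; these are inductive factorizations by the induction hypothesis applied to $\CA'$ and to $\CA''$, so Definition \ref{def:indfactored}(ii) yields that $\pi_X$ is an inductive factorization of $\CA_X$, completing the induction.

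For the hereditary statement, let $\CA$ be hereditarily inductively factored, fix $X\in L(\CA)$, and take $Z\in L(\CA_X)$; note $Z\in L(\CA)$. Since every $H\in\CA_X$ contains $X$ we have $X\subseteq Z$ and $\CA_Z\subseteq\CA_X$, so $(\CA_X)_Z=\CA_Z$ and $(\CA_X)^Z=\{Z\cap H:H\in\CA_X\setminus\CA_Z\}\subseteq\CA^Z$. Setting $W:=\bigcap_{Y\in(\CA_X)^Z}Y\in L(\CA^Z)$ we have $X\subseteq W$, and then for $Y=Z\cap H\in\CA^Z$ with $W\subseteq Y$ one gets $X\subseteq W\subseteq H$, so $H\in\CA_X$ and $Y\in(\CA_X)^Z$; hence $(\CA^Z)_W=(\CA_X)^Z$. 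Thus $(\CA_X)^Z$ is a localization of $\CA^Z$, which is inductively factored by hypothesis, so by the first part $(\CA_X)^Z$ is inductively factored. As $Z\in L(\CA_X)$ was arbitrary, $\CA_X$ is hereditarily inductively factored.
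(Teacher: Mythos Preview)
The theorem is quoted from \cite{moellerroehrle:factored} and the present paper does not supply its own proof; it is stated here purely as background. Hence there is no proof in this paper against which to compare your argument.

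That said, your argument is essentially correct and follows the natural approach one would expect for such a statement: induction on $|\CA|$ along the inductive chain furnished by Definition~\ref{def:indfactored}, splitting according to whether the distinguished hyperplane $H_0$ lies in $\CA_X$ or not, and in the harder case identifying the triple $(\CA_X,(\CA_X)',(\CA_X)'')$ with localizations of the triple $(\CA,\CA',\CA'')$. The verifications you sketch---that $(\CA_X)'=(\CA')_{X'}$, that $(\CA_X)^{H_0}=(\CA'')_X$, that the restriction map $\R_{\pi_X,H_0}$ is bijective, and that the induced partitions match $(\pi')_{X'}$ and $(\pi'')_X$---are all correct; in particular, the surjectivity argument via pulling back along the bijective $\R_{\pi,H_0}$ and observing $X\subseteq H_0\cap H\subseteq H''$ is the key point and goes through cleanly. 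Your hereditary argument, showing that $(\CA_X)^Z=(\CA^Z)_W$ for a suitable $W\in L(\CA^Z)$, is also correct and is precisely the content of \cite[Rem.~3.7]{moellerroehrle:factored} alluded to in the citation.
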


\section{The intermediate arrangements $\CA_\ell^k(r)$}
\label{sec:akl}

In this section we discuss the intermediate 
arrangements $\CA^k_\ell(r)$ from 
\cite[\S 2]{orliksolomon:unitaryreflectiongroups}
(cf.\ \cite[\S 6.4]{orlikterao:arrangements}) 
in more detail, as they occur as 
restrictions of $\CA(G(r,r,\ell))$, 
\cite[Prop.\ 2.14]{orliksolomon:unitaryreflectiongroups}
(cf.\ \cite[Prop.~6.84]{orlikterao:arrangements}).
They interpolate between the
reflection arrangements of $G(r,r,\ell)$ and $G(r,1,\ell)$. 
For  $\ell \geq 2$ and $0 \leq k \leq \ell$ the defining polynomial of
$\CA^k_\ell(r)$ is given by
\[
Q(\CA^k_\ell(r)) = x_1 \cdots x_k\prod\limits_{\substack{1 \leq i < j \leq \ell\\ 0 \leq n < r}}(x_i - \zeta^nx_j),
\]
where $\zeta$ is a primitive $r\th$ root of unity,
so that 
$\CA^\ell_\ell(r) = \CA(G(r,1,\ell))$ and 
$\CA^0_\ell(r) = \CA(G(r,r,\ell))$. 
Note that for $0 < k < \ell$, 
$\CA^k_\ell(r)$ is not a reflection arrangement.
Thanks to 
\cite[Props.\ 2.11, 2.13]{orliksolomon:unitaryreflectiongroups},
each of these arrangements is free with 
\[
\exp(\CA_\ell^k(r)) = \{1, r+1, 2r+1, \dots, (\ell-2)r+1, (\ell-1)r+k-\ell+1\}
\]
(cf.~\cite[Props.~6.82, 6.85]{orlikterao:arrangements}).
The supersolvable and inductively free instances among the $\CA^k_\ell(r)$ 
are classified in 
Theorems \ref{thm:indfree-restriction}(ii) and \ref{thm:super-restriction}(ii). 

We abbreviate the hyperplanes in $\CA_{\ell}^k(r)$ as follows. 
For $1\leq a < b \leq \ell, 0\leq n < r$ and 
$1 \leq c \leq k$, let
\[
H_{a,b}^n:=\ker(x_a-\zeta^n x_b) \quad\text{ and }\quad H_c := \ker (x_c). 
\] 

\begin{lemma}
\label{lem:al-3} 
Let $\CA = \CA^{\ell-3}_\ell(r)$ for $r \ge 2, \ell \ge 4$.
Then $\CA$ is not nice.
\end{lemma}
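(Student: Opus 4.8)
The plan is to reduce to the three–dimensional case, where Lemma~\ref{lem:3-arr} and Theorem~\ref{thm:add-del-factored} give us the strongest tools, and then to exhibit an explicit element $X$ of the intersection lattice whose restriction cannot be nice. Concretely, suppose for contradiction that $\CA = \CA^{\ell-3}_\ell(r)$ is nice, and let $\pi = (\pi_1,\dots,\pi_\ell)$ be a factorization; here $\ell = r(\CA)$ by Corollary~\ref{cor:teraofactored}(i). Since $\CA$ is free with
\[
\exp(\CA^{\ell-3}_\ell(r)) = \{1, r+1, 2r+1, \dots, (\ell-2)r+1, (\ell-1)r+(\ell-3)-\ell+1\} = \{1, r+1, \dots, (\ell-2)r+1, (\ell-1)r-2\},
\]
Remark~\ref{rem:exponents} forces $\{|\pi_1|,\dots,|\pi_\ell|\}$ to equal this multiset. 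First I would pass to a restriction $\CA^Y$ with $\dim Y = 3$ obtained by intersecting with coordinate hyperplanes or hyperplanes of the form $H_{a,b}^0$, chosen so that $\CA^Y$ is (isomorphic to) a smaller intermediate arrangement $\CA^{k'}_3(r)$ with $k' \le 0$, i.e.\ essentially $\CA(G(r,r,3))$ together with at most finitely many coordinate hyperplanes; the point of picking $Y$ this way is that by \cite[Prop.\ 2.14]{orliksolomon:unitaryreflectiongroups} such restrictions of $\CA^{\ell-3}_\ell(r)$ land among the $\CA^{\bullet}_3(r)$, and by the $k$–arithmetic in the exponent formula the "defect" $-2$ is preserved, landing us on $\CA^{0}_3(r)$ or a closely related arrangement that is known \emph{not} to be nice by Theorem~\ref{thm:factoredrefl}(i) (as $\CA(G(r,r,3))$ is nice but $\CA^{-1}_3(r)$-type behaviour is not). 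Since niceness is inherited by restrictions only in the hereditary sense, the cleanest route is: if $\CA$ were nice, Lemma~\ref{lem:3-arr} applied after reaching $\ell = 3$ would make it hereditarily factored, contradicting the failure of niceness for the relevant rank-$3$ restriction.

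More carefully, and avoiding the assumption that niceness automatically descends, I would instead argue directly with the induced partitions. For the top element issue: by Corollary~\ref{cor:teraofactored}(iii), for every $X \in L(\CA)$ the rank $r(X)$ equals the number of blocks $\pi_i$ meeting $\CA_X$. I would locate an $X \in L(\CA)\setminus\{V\}$ of rank $2$ — for instance $X = H_{1,2}^0 \cap H_{1,2}^1$ (the "triple point" type flat where all $r$ hyperplanes $H_{1,2}^n$ and possibly $H_1, H_2$ meet, which has rank $2$) — such that $\CA_X$ is large enough that no induced block $\pi_i \cap \CA_X$ can be a singleton, violating Definition~\ref{def:factored}(c)(ii). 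The key combinatorial input is that in $\CA^{\ell-3}_\ell(r)$ there are rank-$2$ localizations containing $r$ or $r+2$ hyperplanes, and the constraint that exactly two blocks meet such an $X$ (rank $2$), combined with the exponent multiset $\{1, r+1, \dots, (\ell-2)r+1, (\ell-1)r-2\}$ forcing most block sizes to be large, should make a singleton induced block impossible on a well-chosen flag of flats.

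The main obstacle I anticipate is the bookkeeping needed to show that \emph{no} factorization exists, not merely that one natural candidate fails: one must rule out all partitions with the prescribed multiset of sizes. I expect the decisive step to be a counting argument of the following shape — pick two rank-$2$ flats $X, X'$ whose localizations overlap in a controlled way, apply Corollary~\ref{cor:teraofactored}(iii) to each to pin down which blocks meet $X$ and which meet $X'$, and derive that some block is forced to be a singleton on one of them while its total size (dictated by the exponents) is $\ge r+1 \ge 3$, a contradiction. An alternative, possibly cleaner, execution uses the addition–deletion Theorem~\ref{thm:add-del-factored}: delete the hyperplane $H_k = H_{\ell-3}$ to relate $\CA^{\ell-3}_\ell(r)$ to $\CA^{\ell-4}_\ell(r)$ and its restriction; an inductive argument on $k$ down to $k=0$, where $\CA^0_\ell(r) = \CA(G(r,r,\ell))$ is known by Theorem~\ref{thm:factoredrefl}(i) to be \emph{not} nice for $\ell \ge 4$, would close the loop, with the bijectivity condition on $\R$ in Theorem~\ref{thm:add-del-factored} being the technical point to check. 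Either way, the heart of the matter is a careful analysis of rank-$2$ flats of the intermediate arrangements and their induced partitions, and I would organize the write-up around that lemma.
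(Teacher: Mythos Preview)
Your proposal has a genuine gap: you reach for \emph{restriction} when the tool that actually propagates non-niceness is \emph{localization}. Niceness is closed under localization (Remark~\ref{rem:factored}), not under restriction, and this is exactly what the paper uses: for $\ell \ge 5$ one sets
\[
X := \bigcap_{\substack{\ell-3 \le a < b \le \ell \\ 0 \le n < r}} H_{a,b}^n \in L(\CA^{\ell-3}_\ell(r)),
\]
checks $\CA_X \cong \CA^1_4(r)$, and then proves directly that $\CA^1_4(r)$ is not nice. Your first paragraph instead descends via a restriction $\CA^Y$; you correctly flag that niceness does not pass to restrictions, but then do not repair this, and the sentence about ``$\CA^{-1}_3(r)$-type behaviour'' has no content.

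Your addition--deletion route also does not work. Theorem~\ref{thm:add-del-factored} is a statement about a \emph{fixed} partition $\pi$: from ``$\pi$ is nice for $\CA$'' alone you cannot conclude that the induced $\pi'$ is nice for $\CA' = \CA^{\ell-4}_\ell(r)$ without first verifying condition (iii) for the unknown $\pi$. So non-niceness of $\CA^0_\ell(r)$ gives no obstruction upstream. Indeed, $\CA^{\ell-2}_\ell(r)$ \emph{is} nice (Lemma~\ref{lem:al-2}) while its deletion $\CA^{\ell-3}_\ell(r)$ is exactly what we are trying to show is not, so deleting a coordinate hyperplane can genuinely destroy niceness.

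The instinct in your middle paragraph --- analyze rank-$2$ flats via Corollary~\ref{cor:teraofactored}(iii) --- is the right one, but it must be carried out on the fixed arrangement $\CA^1_4(r)$ and needs substantially more than one flat. The paper argues that the block of size $3r-2$ is forced to contain hyperplanes from three coordinate pairs sharing a common index, then uses the relations $H_{a,b}^m \cap H_{a,c}^n \subset H_{b,c}^{n-m}$ to push contradictions through the remaining blocks. Your single example $X = H_{1,2}^0 \cap H_{1,2}^1$ gives no contradiction: $\CA_X$ meets exactly two blocks, and nothing prevents one of those intersections from being a singleton.
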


\begin{proof}
It suffices to show the result for $\CA^1_4(r)$.
For, let $\CA = \CA^{\ell-3}_\ell(r)$ for $r \ge 2, \ell \ge 5$.
Then $\CA^1_4(r)$ is realized as a localization 
of $\CA$ as follows. Setting 
\[
X := \bigcap\limits_{\substack{\ell-3 \leq a < b \leq \ell\\ 0 \leq n < r}} H_{a,b}^n
\]
one readily checks that 
\[
\CA_X \cong \CA^1_4(r).
\]
Consequently, if $\CA^1_4(r)$ is not nice, 
then neither is $\CA$, thanks to 
Remark \ref{rem:factored}. 

Next we show that $\CA = \CA^1_4(r)$ fails to be nice. 
We are going to use Corollary \ref{cor:teraofactored} 
repeatedly on lattice elements of rank 2, 
to show that no partition of $\CA$ can be nice.
For that suppose $\pi = (\pi_1, \pi_2, \pi_3, \pi_4)$ 
is a nice partition of $\CA$. 
Since $\CA$ is free with $\exp(\CA)=\{1,r+1,2r+1,3r-2\}$, 
the cardinalities of the parts of $\pi$ coincide with the exponents of $\CA$,
by Remark \ref{rem:exponents}. 

First note that for $\{a,b,c,d\}=\{1,2,3,4\}$ 
and any $0\leq m,n<r$, 
we have $\{H_{a,b}^m,H_{c,d}^n\}\in L(\CA)$ 
of rank 2. 
So by Corollary \ref{cor:teraofactored}(iii), 
there can't be hyperplanes of this kind 
in the same $\pi_i$ unless they have a coordinate in common.
Since there are 6 different pairs of distinct 
coordinates $(a,b)$ for $H_{a,b}$ 
and each such excludes exactly one other such pair, 
there can be at most 3 different pairs of coordinates showing up among the 
members within the same $\pi_i$.
Likewise, the coordinate hyperplane $H_1$ 
can't be in the same part as any of the $H_{a,b}$ with $a\neq 1$.

If two hyperplanes have only one coordinate in common, 
then their intersection is contained in a third hyperplane.
That is
\begin{equation}
\label{eq:delta}
\{H_{a,b}^m,H_{a,c}^n,H_{b,c}^p\}\in L(\CA)
\text{ for } 
1\leq a<b<c\leq 4,\hspace{5pt} 0\leq m,n < r 
\text{ and } p \equiv n-m \!\!\! \mod r.
\end{equation}

The intersection of two hyperplanes 
with the same pair of coordinates $(a,b)$ 
is contained in every hyperplane associated with the 
pair $(a,b)$, and if $a=1$, is also contained in $H_1$,
of course. 
Thus, by Corollary \ref{cor:teraofactored}(iii) again,
each of the two sets
$\{H_{1,b}^z\mid 0\leq z < r\}\cup \{H_1\}$ 
and $\{H_{a,b}^z \mid 0\leq z < r\}$ for $a\neq 1$ 
has to split into two parts of $\pi$.

Now let $|\pi_1| = 3r-2$. 
Since $|\{H_{a,b}^z\mid 0\leq z < r\}|=r$ 
for any $1\leq a<b\leq 4$ 
and because of the previous restrictions discussed above, 
if $r\geq 3$, then $\pi_1$ has to contain hyperplanes of 
three different coordinate pairs.
In case $r=2$, it is sufficient to consider the case 
$\pi_1=\{H_{1,2}^0, H_{1,2}^1,H_{1,3}^0, H_{1,3}^1\}$.
The other possibilities are equivalent to this one. 
Suppose that $H_{3,4}^0\in \pi_2$ and $H_{3,4}^1\in \pi_3$, 
since they have to be in separate parts. 
By \eqref{eq:delta}, we have 
$H_{1,4}^0 \in \pi_2\cap \pi_3$, which is absurd. 
So let $r\geq 2$ and suppose that $\pi_1$ contains hyperplanes 
of three different coordinate pairs.
The possibilities are as follows:
\begin{enumerate}[(i)]
\item $H_{1,2},H_{1,3},H_{1,4} \in \pi_1$;
\item $H_{1,2},H_{2,3},H_{2,4} \in \pi_1$;
\item $H_{1,3},H_{2,3},H_{3,4} \in \pi_1$;
\item $H_{1,4},H_{2,4},H_{3,4} \in \pi_1$.
\end{enumerate}
The cases (ii), (iii) and (iv) are equivalent, by symmetry, 
and only in case (i) we might have $H_1\in \pi_1$.

First consider case (ii). 
Since $|\pi_1|=3r-2$ 
there are exactly two hyperplanes 
with the same set of coordinates that are not in $\pi_1$. 
More precisely, this must be one with pair $(2,3)$ and one 
with the pair $(2,4)$. 
Again, because of symmetry, it is sufficient to 
consider the case when $H_{2,3}^0, H_{2,4}^0 \not\in \pi_1$. 
Suppose that $H_{2,3}^0\in \pi_2$. 
It follows from \eqref{eq:delta} that 
$H_{1,3}^0, H_{3,4}^1\in \pi_2$. 
Then it further follows that  
$H_{2,4}^0\in \pi_2$. But 
this is a contradiction, 
as two hyperplanes in $\pi_2$ 
without a common coordinate are disallowed. 

Now consider case (i). 
Independent whether or not $H_1$ 
belongs to $\pi_1$, 
there are at least two hyperplanes 
of the form as in (i) that do not belong to $\pi_1$. 
Because of symmetry, it suffices to consider the case
$H_{1,2}^0, H_{1,3}^0 \not \in \pi_1$.
Suppose $H_{1,2}^0\in \pi_2$. 
It follows from \eqref{eq:delta} that 
$H_{2,3}^1\in \pi_2$. It then follows that also 
$H_{1,3}^0 \in \pi_2$. 
However, since there must be a hyperplane 
$H_{1,4}^k$ in $\pi_1$, 
we also get $H_{2,4}^k\in \pi_2$, 
which is a contradiction, again because
having two hyperplanes in $\pi_2$ 
without a common coordinate is not possible. 

Ultimately, we see that there is no nice partition of 
$\CA^1_4(r)$.
\end{proof}

\begin{lemma}
\label{lem:al-2}
Let $\CA = \CA^{\ell-2}_\ell(r)$ for $r, \ell \ge 2$.
Then $\CA$ is inductively factored.
\end{lemma}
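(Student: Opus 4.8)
The plan is to exhibit an explicit inductive factorization of $\CA = \CA^{\ell-2}_\ell(r)$ by building it up hyperplane by hyperplane via Theorem \ref{thm:add-del-factored}, i.e.\ by constructing an induction table of factorizations in the sense of Remark \ref{rem:indfactable}. Since $\CA^{\ell-2}_\ell(r)$ is free with $\exp(\CA) = \{1, r+1, 2r+1, \dots, (\ell-3)r+1, (\ell-2)r-1\}$ (the general exponent formula with $k = \ell-2$), any inductive factorization must have parts of exactly these sizes, by Remark \ref{rem:exponents}; this tells us what to aim for. I would organize the induction so that the ambient arrangement grows through a chain $\CA^{\ell-2}_{\ell-1}(r) \subset \CA^{\ell-2}_\ell(r)$-type steps, but it is cleaner to fix $\ell$ and induct on the number of hyperplanes, grouping the hyperplanes by coordinate: the natural candidate partition $\pi$ has one small part $\pi_1$ and parts $\pi_j$ (for $2 \le j \le \ell-1$) of size $r$ or thereabouts. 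Concretely, I would take $\pi_1 = \{H_1, H_2, H_{1,2}^0, \dots\}$ to be the part of size $(\ell-2)r-1$ assembled last, and let the early parts be the ``full pencils'' $\{H_{j,j+1}^n \mid 0 \le n < r\}$ together with the two coordinate hyperplanes $H_1, H_2$ distributed appropriately — the precise bookkeeping mirrors the supersolvable construction for $\CA^{\ell-1}_\ell(r)$ in \cite{amendhogeroehrle:super}, with one pencil ``broken'' to account for the drop from exponent $(\ell-1)r-1$ to $(\ell-2)r-1$.

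The key steps, in order: (1) Set up the total order on the hyperplanes of $\CA^{\ell-2}_\ell(r)$ so that at each stage the added hyperplane $H_i$ lies in a designated block and the triple $(\CA_i, \CA_{i-1}, \CA_i^{H_i})$ has the restriction map $\R_{\pi_i, H_i}$ bijective onto $\CA_i^{H_i}$; (2) identify each intermediate restriction $\CA_i^{H_i}$ as (a product of) a smaller arrangement of the same intermediate type $\CA^{k'}_{\ell'}(r)$ or a Boolean/near-pencil arrangement, using the restriction description \cite[Prop.\ 2.14]{orliksolomon:unitaryreflectiongroups} together with the product rule \eqref{eq:restrproduct}; (3) verify inductively (on $\ell$, with base case $\ell = 2$ where $\CA^0_2(r) = \CA(G(r,r,2))$ is a rank-$2$ arrangement hence trivially inductively factored, and $\ell=3$ handled directly) that each such restriction is itself inductively factored with the induced partition $\pi''$; (4) apply the ``Addition'' direction of Theorem \ref{thm:add-del-factored} repeatedly to conclude $(\CA, \pi) \in \CIFAC$. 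Displaying the induction table of factorizations explicitly (as in Table \ref{table0}) for small $r, \ell$ and then arguing the pattern persists is the concrete realization of this.

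The main obstacle will be step (2)–(3): controlling what the restriction $\CA^{H_0}$ of an intermediate arrangement to one of its hyperplanes actually is, and checking that the induced partition on it really is an inductive factorization rather than merely a factorization. Restricting to a coordinate hyperplane $H_c$ versus a ``diagonal'' hyperplane $H_{a,b}^n$ gives genuinely different combinatorics — the former drops to $\CA^{\ell-3}_{\ell-1}(r)$-ish data (which by Lemma \ref{lem:al-3} need \emph{not} be nice, so those restrictions must be kept out of the induction chain or shown to factor as products with the bad factor absent), while the latter collapses two coordinates and typically yields another intermediate arrangement with shifted parameters. So the delicate point is choosing the order of hyperplane addition so that every restriction encountered stays within the inductively factored class; I expect this forces adding all diagonal pencils before the coordinate hyperplanes, and adding the coordinate hyperplanes $H_1, H_2$ last, into $\pi_1$. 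Once the order is pinned down, the verification that $\R$ is bijective at each step and that $\pi''$ is the inducedpartition is routine counting with the exponent formula, and the nested induction on $\ell$ closes the argument.
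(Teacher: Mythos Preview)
Your overall strategy (build an induction table of factorizations via Theorem \ref{thm:add-del-factored}) is the right one, and in fact your first instinct --- growing $\CA^{\ell-2}_{\ell-1}(r)$ up to $\CA^{\ell-2}_\ell(r)$ --- is \emph{exactly} what the paper does. The paper starts from the product $\CA^{\ell-2}_{\ell-1}(r)\times\Phi_1$, which is supersolvable (since $\CA^{\ell-2}_{\ell-1}(r)=\CA^{(\ell-1)-1}_{\ell-1}(r)$ falls under Theorem~\ref{thm:super-restriction}(ii)) and hence inductively factored, and then adjoins only the hyperplanes $H_{i,\ell}^n$ involving the last coordinate. Each restriction arising along the way is identified as either $\CA^{\ell-2}_{\ell-1}(r)$ or $\CA^{\ell-1}_{\ell-1}(r)$, both supersolvable. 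You had this idea and discarded it in favour of ``fixing $\ell$ and inducting on the number of hyperplanes'' with the coordinate hyperplanes $H_1,H_2$ added last; that alternative is where the problems lie.

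Two concrete issues. First, your exponent list is wrong: for $k=\ell-2$ the exponents are $\{1,r+1,\ldots,(\ell-2)r+1,(\ell-1)r-1\}$, an $\ell$-element multiset, not $\{1,r+1,\ldots,(\ell-3)r+1,(\ell-2)r-1\}$; this miscount propagates into your description of the part sizes and makes the proposed $\pi_1$ the wrong shape. Second, and more seriously, your suggested order (``all diagonal pencils before the coordinate hyperplanes'') forces the chain of subarrangements to pass through $\CA^0_\ell(r)=\CA(G(r,r,\ell))$, which by Theorem~\ref{thm:factoredrefl}(i) is \emph{not} nice for $\ell\ge 4$. Since every arrangement in an induction table of factorizations must itself be inductively factored, the chain breaks there and the Addition part of Theorem~\ref{thm:add-del-factored} never applies. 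The paper's route avoids this precisely by keeping all $\ell-2$ coordinate hyperplanes present from the start (they already sit inside $\CA^{\ell-2}_{\ell-1}(r)$) and only adding diagonals through the new coordinate $x_\ell$; that way no restriction or intermediate arrangement ever drops below the supersolvable $\CA^{m-1}_m(r)$ threshold. Go back to your abandoned first idea.
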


\begin{proof}
The idea is to start with $\CA_{\ell-1}^{\ell-2}(r) \times \Phi_1$ 
and give an inductive chain of factorizations 
up to $\CA^{\ell-2}_\ell(r)$.
By \cite[Lem.\ 3.1]{amendhogeroehrle:super}, 
$\CA_{\ell-1}^{\ell-2}(r)$ is supersolvable, 
and therefore it is inductively factored, 
by Proposition \ref{prop:superindfactored}.
Thus $\CA_{\ell-1}^{\ell-2}(r) \times \Phi_1$ 
is inductively factored, by Proposition \ref{prop:product-indfactored}.
Let $\pi= (\pi_1,\dots,\pi_{\ell-1})$ 
be an inductive factorization of 
$\CA_{\ell-1}^{\ell-2}(r)$ and set 
$\pi_\ell = \emptyset$. 
Then, with Proposition $\ref{prop:product-indfactored}$, 
$\pi^{(0)}:=(\pi_1,\dots,\pi_\ell)$ 
is an inductive factorization of 
$\CA_0:=\CA_{\ell-1}^{\ell-2}(r) \times \Phi_1$.\\
To obtain $\CA_\ell^{\ell-2}(r)$ from $\CA_0$, 
we have to add each hyperplane of the set 
$\{H_{i,\ell}^z\mid 0\leq i < \ell, 0\leq z < r\}$, 
which contains $(\ell-1)r$ different hyperplanes, to $\CA_0$. 
Comparing the exponents of $\CA_{\ell-1}^{\ell-2}$ 
to those of $\CA_{\ell}^{\ell-2}$, we see that  
$(\ell-1)r-1$ of these 
hyperplanes have to be added to $\pi_\ell$ 
and one to $\pi_{\ell-1}$.
These remaining hyperplanes 
$\{\tilde H_1,\dots,\tilde H_{(\ell-1)r}\}$ 
are ordered as indicated in Table \ref{table0}.
Define $\CA_i:=\CA_{i-1}\cup \{\tilde H_i\}$. 
Furthermore let $\pi^{(i)}$ be the partition 
of $\CA_i$ obtained from $\pi^{(i-1)}$ after $\tilde H_i$ is added. 
The induction of hyperplanes is given in Table \ref{table0} below.
In each step $i=1,\dots,(\ell-1)r$, 
we have to show that $(\CA_i'',(\pi^{(i)})'') \in \CIFAC$ 
as well as 
\[
\exp(\CA_i'')=\{|\pi^{(i)}_1|,\dots,|\pi^{(i)}_{j-1}|,|\pi^{(i)}_{j+1}|,\dots,|\pi^{(i)}_\ell|\}
\text{ with }\tilde H_i \in \pi^{(i)}_j.
\]

\begin{table}[ht!b]\tiny
\renewcommand{\arraystretch}{1.5}
\begin{tabular}{lllll}\hline
$(\pi^{(i)})'$ &  $\exp\CA_i'$ & $\tilde H_i$  & $\exp\CA_i''$\\ 
\hline\hline
$\pi, \{\}$ &  $\exp(\CA_{\ell-1}^{\ell-2}(r)),0$ &  $H_{1,\ell}^{0}$ & $\exp(\CA_{\ell-1}^{\ell-2}(r))$ \\
$\pi, \{H_{1,\ell}^0\}$ &  $\exp(\CA_{\ell-1}^{\ell-2}(r)),1$ &  $H_{1,\ell}^{1}$ & $\exp(\CA_{\ell-1}^{\ell-2}(r))$ \\
$\pi, \{H_{1,\ell}^0, H_{1,\ell}^1\}$ &  $\exp(\CA_{\ell-1}^{\ell-2}(r)),2$ &  $H_{1,\ell}^{2}$ & $\exp(\CA_{\ell-1}^{\ell-2}(r))$ \\
$\vdots$ & $\vdots$ & $\vdots$ & $\vdots$  \\
$\pi, \{H_{1,\ell}^0,\dots,H_{1,\ell}^{r-1}\}$ &  $\exp(\CA_{\ell-1}^{\ell-2}(r)),r$ & $H_{2,\ell}^{0}$ &  $\exp(\CA_{\ell-1}^{\ell-2}(r))$ \\
$\pi, \{H_{1,\ell}^0,\dots,H_{1,\ell}^{r-1},H_{2,\ell}^0\}$ &  $\exp(\CA_{\ell-1}^{\ell-2}(r)),r$ & $H_{2,\ell}^{1}$ &  $\exp(\CA_{\ell-1}^{\ell-2}(r))$ \\
$\vdots$ & $\vdots$ & $\vdots$ & $\vdots$  \\
$\pi, \{H_{1,\ell}^0,\dots,H_{\ell-3,\ell}^{r-1}\}$ &  $\exp(\CA_{\ell-1}^{\ell-2}(r)),(\ell-3)r$ & $H_{\ell-2,\ell}^{0}$ &  $\exp(\CA_{\ell-1}^{\ell-2}(r))$ \\
$\vdots$ & $\vdots$ & $\vdots$ & $\vdots$  \\
$\pi, \{H_{1,\ell}^0,\dots,H_{\ell-2,\ell}^{r-2}\}$ &  $\exp(\CA_{\ell-1}^{\ell-2}(r)),(\ell-2)r-1$ & $H_{\ell-2,\ell}^{r-1}$ &  $\exp(\CA_{\ell-1}^{\ell-2}(r))$ \\
$\pi, \{H_{1,\ell}^0,\dots,H_{\ell-2,\ell}^{r-1}\}$ &  $\exp(\CA_{\ell-1}^{\ell-2}(r)),(\ell-2)r$ & $H_{\ell-1,\ell}^{0}$ &  $\exp(\CA_{\ell-1}^{\ell-2}(r))$ \\
$\pi_1,\dots,\pi_{\ell-1}\cup \{H_{\ell-1,\ell}^0\}, \{H_{1,\ell}^0,\dots,H_{\ell-2,\ell}^{r-1}\}$ &  $\exp(\CA_{\ell-1}^{\ell-1}(r)),(\ell-2)r$ & $H_{\ell-1,\ell}^{1}$ &  $\exp(\CA_{\ell-1}^{\ell-1}(r))$ \\
$\pi_1,\dots,\pi_{\ell-1}\cup \{H_{\ell-1,\ell}^0\}, \{H_{1,\ell}^0,\dots,H_{\ell-2,\ell}^{r-1}, H_{\ell-1,\ell}^1 \}$ &  $\exp(\CA_{\ell-1}^{\ell-1}(r)),(\ell-2)r+1$ & $H_{\ell-1,\ell}^{2}$ &  $\exp(\CA_{\ell-1}^{\ell-1}(r))$ \\
$\vdots$ & $\vdots$ & $\vdots$ & $\vdots$  \\
$\pi_1,\dots,\pi_{\ell-1}\cup \{H_{\ell-1,\ell}^0\}, \{H_{1,\ell}^0, \dots, H_{\ell-1,\ell}^{r-2}\}\setminus \{H_{\ell-1,\ell}^0\}$ &  $\exp(\CA_{\ell-1}^{\ell-1}(r)),(\ell-1)r-2$ & $H_{\ell-1,\ell}^{r-1}$ &  $\exp(\CA_{\ell-1}^{\ell-1}(r))$ \\

$\pi_1,\dots,\pi_{\ell-1}\cup \{H_{\ell-1,\ell}^0\}, \{H_{1,\ell}^0,\dots,H_{\ell-1,\ell}^{r-1}\}\setminus \{H_{\ell-1,\ell}^0\}$ & $\exp(\CA_{\ell}^{\ell-2}(r)) $& \\

\hline
\end{tabular}
\smallskip
\caption{Induction Table of Factorizations for $\CA_\ell^{\ell-2}(r)$}
\label{table0}
\end{table}

To prove that this table is indeed 
an inductive factorization table, 
and therefore that ultimately $\CA_{\ell}^{\ell-2}(r)$ 
is inductively factored, we are going to proceed in 3 steps.

First let $\tilde H_i\in \{H_{1,\ell}^0,\dots,H_{\ell-2,\ell}^{r-1}\}$. 
Since only $\pi^{(i)}_{\ell}$ contains 
hyperplanes that involve the coordinate $x_\ell$, 
one can easily verify that $\CA_i'' \cong \CA_{\ell-1}^{\ell-2}(r)$ and $(\pi^{(i)})'' \cong \pi$, so it is inductively factored by assumption.

For the next step we first need to specify the
the inductive factorization 
$(\pi_1,\dots,\pi_{\ell-1})$ of $\CA_{\ell-1}^{\ell-2}(r)$ 
we intend to start with.
Let $\pi_{\ell-1} = \{H_{j,\ell-1}^n \mid 1\leq j < \ell-1, 0\leq n < r\}$.
This is indeed possible. 
By starting with an arbitrary inductive factorization 
$(\pi_1,\dots,\pi_{\ell-2}, \emptyset)$ for $\CA_{\ell-2}^{\ell-2}\times \Phi_1$ 
and then inductively adding 
the remaining hyperplanes $\{H_{j,\ell-1}^z\mid 1\leq j < \ell-1, 0\leq z < r\}$ 
to the empty part of the partition, 
one can easily verify that this gives an inductive factorization chain.

Now assume that $\tilde H_i=H_{\ell-1,\ell}^0$ and let $\pi_{\ell-1}$ be like before.
It is easy to verify that 
$\pi^{(i-1)}_{\ell} \cong \pi_{\ell-1}$ and that 
$\CA_i'' \cong \CA_{\ell-1}^{\ell-2}(r)$, 
so by assumption we get
\[
\left(\CA_i'',(\pi^{(i)})'')\right) \cong \left(\CA_{\ell-1}^{\ell-2}(r),\pi\right)\in \CIFAC.
\]
Finally let $\tilde H_i\in\{H_{\ell-1,\ell}^1,\dots,H_{\ell-1,\ell}^{r-1}\}$.
Then, since 
$H_{\ell-1,\ell}^0\in \pi^{(i)}_{\ell-1}$ and 
$H_{\ell-1,\ell}^0 \cap \tilde H_i = H_{\ell-1} \cap \tilde H_i$ 
we get a new coordinate hyperplane in the restriction and therefore 
$\CA_i'' \cong \CA_{\ell-1}^{\ell-1}(r)$. \\
Furthermore, we have 
$(\pi^{(i)})''\cong (\pi_1,\dots,\pi_{\ell-2}, \pi_{\ell-1} \cup \{H_{\ell-1}\})$, 
so we have to show that this is indeed an inductive factorization of 
$ \CA_{\ell-1}^{\ell-1}(r)$.\\
Starting with $(\CA_{\ell-1}^{\ell-2}(r),\pi)$ 
and adding $H_{\ell-1}$ to $\pi_{\ell-1}$, 
we get 
\[
\left((\CA_{\ell-1}^{\ell-2}(r))'',\pi''\right) 
= \left(\CA_{\ell-2}^{\ell-2}(r),(\pi_1,\dots,\pi_{\ell-2})\right).
\] 
This is inductively factored due to the previous construction. Consequently,
\[
\left(\CA_i'',(\pi^{(i)})''\right) \cong 
\left(\CA_{\ell-1}^{\ell-1}(r),(\pi_1,\dots,\pi_{\ell-2}, \pi_{\ell-1} \cup \{H_{\ell-1}\})\right) \in \CIFAC
\]
and for $i=(\ell-1)r$ we get that
$\CA_i = \CA_{\ell}^{\ell-2}(r)$
is indeed inductively factored, as desired.
\end{proof}

We are now able to classify all nice and all 
inductively factored instances among the $\CA^k_\ell(r)$.

\begin{theorem}
\label{thm:akl}
Let $\CA = \CA^k_\ell(r)$ for $r,\ell \ge 2$ and $0 \le k \le \ell$.
\begin{itemize}
\item[(i)] 
If $\ell = 2$, then $\CA$ is inductively factored;
\item[(ii)]
If $\ell = 3$, then $\CA$ is nice;
\item[(iii)]
For $\ell \ge 3$, 
$\CA$ is inductively factored if and only if $\ell -2 \le k \le \ell$.
\item[(iv)]
For $\ell \ge 4$, $\CA$ is nice if and only if 
$\CA$ is inductively factored.
\end{itemize}
\end{theorem}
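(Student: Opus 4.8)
The plan is to dispatch the four parts using Lemmas \ref{lem:al-2} and \ref{lem:al-3} together with the known behaviour of the two boundary members $\CA^0_\ell(r) = \CA(G(r,r,\ell))$ and $\CA^\ell_\ell(r) = \CA(G(r,1,\ell))$, which are reflection arrangements, and the supersolvability of $\CA^{\ell-1}_\ell(r)$ recorded in \cite[Lem.\ 3.1]{amendhogeroehrle:super}. Part (i) is immediate: a central arrangement of rank $\le 2$ is supersolvable (in rank $2$ every hyperplane is a modular element), so $\CA^k_2(r)$ is inductively factored by Proposition \ref{prop:superindfactored}.

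Next I would settle the ``positive'' range $\ell - 2 \le k \le \ell$ uniformly: $\CA^\ell_\ell(r) = \CA(G(r,1,\ell))$ is supersolvable, $\CA^{\ell-1}_\ell(r)$ is supersolvable by \cite[Lem.\ 3.1]{amendhogeroehrle:super}, and $\CA^{\ell-2}_\ell(r)$ is inductively factored by Lemma \ref{lem:al-2}; hence all three are inductively factored (the first two via Proposition \ref{prop:superindfactored}), and in particular nice. Combined with the fact that $\CA^0_3(r) = \CA(G(r,r,3))$ is nice by Theorem \ref{thm:factoredrefl}(i), this already yields (ii) and gives the ``if'' direction of (iii).

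The heart of the argument is the converse statement underlying (iii) and (iv): \emph{for $\ell \ge 4$ and $k \le \ell - 3$ the arrangement $\CA^k_\ell(r)$ is not nice}. When $k \ge 1$ I would exhibit a forbidden localization just as in the proof of Lemma \ref{lem:al-3}: choose four coordinates $S = \{1, i_2, i_3, i_4\}$ with $i_2, i_3, i_4 > k$ (possible since $\ell - k \ge 3$), set $X := \bigcap_{a,b \in S,\, 0 \le n < r} H^n_{a,b} \in L(\CA)$, and check that $X$ is the coordinate subspace $\{x_1 = x_{i_2} = x_{i_3} = x_{i_4} = 0\}$, so that $\CA_X$ consists precisely of the $H^n_{a,b}$ with $a,b \in S$ together with the single coordinate hyperplane $H_1$; thus $\CA_X \cong \CA^1_4(r) \times \Phi_{\ell - 4}$, which is not nice by Lemma \ref{lem:al-3} and Proposition \ref{prop:product-factored}, and therefore $\CA^k_\ell(r)$ is not nice by Remark \ref{rem:factored}. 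For $k = 0$ the arrangement $\CA^0_\ell(r) = \CA(G(r,r,\ell))$ with $\ell \ge 4$ is neither supersolvable nor $\CA(G(r,r,3))$, so it fails to be nice by Theorem \ref{thm:factoredrefl}(i).

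Granting this claim, (iv) follows at once: an inductively factored arrangement is in particular factored, and conversely if $\CA^k_\ell(r)$ with $\ell \ge 4$ is nice then $k \ge \ell - 2$ by the claim, whence it is inductively factored by the positive range treated above. The same dichotomy gives the ``only if'' part of (iii) for $\ell \ge 4$, since a non-nice arrangement is certainly not inductively factored; the remaining instance $\ell = 3$, $k = 0$, where $\CA^0_3(r) = \CA(G(r,r,3))$, is governed by Theorem \ref{thm:indfactoredrefl}(i), which ties inductive factoredness of this reflection arrangement to its supersolvability. I expect the only genuine difficulty to lie in Lemma \ref{lem:al-3}, which is already in hand; within Theorem \ref{thm:akl} itself the delicate points are merely the verification of the localization isomorphism $\CA_X \cong \CA^1_4(r) \times \Phi_{\ell-4}$ and making sure the two endpoints $k = 0$ and $k = \ell$, being honest reflection arrangements, are matched correctly against the earlier classification results.
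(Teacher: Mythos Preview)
Your proof is correct and follows essentially the same strategy as the paper: supersolvability and Lemma~\ref{lem:al-2} for the range $\ell-2 \le k \le \ell$, Theorems~\ref{thm:factoredrefl} and~\ref{thm:indfactoredrefl} for the reflection-arrangement endpoints, and a localization argument together with Remark~\ref{rem:factored} for the non-nice range. The one noteworthy difference is your choice of localization for $1 \le k \le \ell-3$: you localize uniformly to $\CA^1_4(r)\times\Phi_{\ell-4}$ (not nice by Lemma~\ref{lem:al-3}), whereas the paper localizes at $X=\bigcap_{k+1\le a<b\le\ell}\bigcap_n H^n_{a,b}$ to obtain $\CA_X\cong\CA(G(r,r,\ell-k))$, which works only for $1\le k\le\ell-4$ and forces a separate appeal to Lemma~\ref{lem:al-3} for $k=\ell-3$; your version is a little more economical.
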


\begin{proof}
If $\ell = 2$, then $\CA$ is supersolvable, so part (i) 
follows from Proposition \ref{prop:superindfactored}.
So suppose that $\ell \ge 3$.
For $k = 0, \ell$ the result follows from 
Theorem \ref{thm:factoredrefl}.
By Theorem \ref{thm:super-restriction}(ii), 
$\CA^{\ell-1}_\ell(r)$ is also supersolvable, 
and so the result follows in this case 
again from Proposition \ref{prop:superindfactored}.

Let $\ell \ge 4$. It follows from 
\cite[Ex.\ 3.8]{moellerroehrle:factored} that 
$\CA^k_\ell(r)$ is not nice for $1 \le k \le \ell-4$.
For completeness, we include the easy argument.
So let $1 \le k \le \ell-4$. Define
\[
X := \bigcap\limits_{\substack{k+1 \leq a < b \leq \ell\\ 0 \leq n < r}} H_{a,b}^n.
\]
Then one checks that 
\[
\CA_X \cong \CA^0_{\ell-k}(r) = \CA(G(r,r,\ell-k)).
\]
For $1 \le k \le \ell-4$, it follows from 
Theorem \ref{thm:factoredrefl}
that $\CA(G(r,r,\ell-k))$ is not nice. Consequently, neither is 
$\CA^k_\ell(r)$, by Remark \ref{rem:factored}.
For $k = \ell-3$, it follows from Lemma \ref{lem:al-3} that 
$\CA^{\ell-3}_\ell(r)$ is not nice either.

Finally, for $r, \ell \ge 3$, Lemma \ref{lem:al-3} shows that
$\CA^{\ell-2}_\ell(r)$ is inductively factored.
This completes the proof of the theorem.
\end{proof}

Theorem \ref{thm:akl} readily extends to the hereditary subclasses.

\begin{corollary}
\label{cor:hered-akl}
Let $\CA = \CA^k_\ell(r)$ for $r,\ell \ge 2$ and $0 \le k \le \ell$.
Then $\CA$ is (inductively) factored if and only if 
$\CA$ is hereditarily (inductively) factored.
\end{corollary}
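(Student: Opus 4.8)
The claim (Corollary \ref{cor:hered-akl}) asserts that for the intermediate arrangements $\CA = \CA^k_\ell(r)$, being (inductively) factored is equivalent to being hereditarily (inductively) factored. One direction is trivial: hereditarily (inductively) factored implies (inductively) factored, since $\CA = \CA^V \in L(\CA)$. So the plan is to prove the forward implication: if $\CA^k_\ell(r)$ is (inductively) factored, then $\CA^X$ is (inductively) factored for every $X \in L(\CA)$.

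The plan is to split according to which case of Theorem \ref{thm:akl} applies. If $\ell = 2$, then $\CA$ has rank $2$, so every proper restriction $\CA^X$ with $X \neq V$ has rank $\le 1$ and is trivially inductively factored; combined with $\CA^V = \CA$ being inductively factored by Theorem \ref{thm:akl}(i), we are done. If $\ell = 3$, then $\CA$ is nice by Theorem \ref{thm:akl}(ii), and we invoke Lemma \ref{lem:3-arr}, which says that for $\ell = 3$ an arrangement is (inductively) factored if and only if it is hereditarily (inductively) factored --- this case is immediate. For $\ell \ge 4$, Theorem \ref{thm:akl}(iii) tells us $\CA$ is (inductively) factored exactly when $\ell - 2 \le k \le \ell$, so we only need to handle $k = \ell, \ell-1, \ell-2$.

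For those three remaining cases the strategy is to understand the restrictions $\CA^X$ explicitly. The restrictions of $\CA(G(r,1,\ell)) = \CA^\ell_\ell(r)$ and of $\CA(G(r,r,\ell))$ are themselves (localizations at elements of the lattice of) intermediate arrangements $\CA^{k'}_{\ell'}(r)$ for smaller $\ell'$, by the structure results of Orlik--Solomon (\cite[Prop.\ 2.14]{orliksolomon:unitaryreflectiongroups}, cf.\ \cite[Prop.\ 6.84]{orlikterao:arrangements}); more precisely, restricting to a flat of dimension $p$ yields an arrangement isomorphic to some $\CA^{k'}_p(r)$ with $p - 2 \le k' \le p$ (this is exactly the shape that appears in Theorem \ref{thm:indfree-restriction}(ii) and Theorem \ref{thm:super-restriction}(ii)). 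The key point is that in all cases the exponent parameter $k'$ of the restriction again lies in the "good range" $\{p, p-1, p-2\}$, so by Theorem \ref{thm:akl}(iii) (applied in dimension $p$, with the cases $p \le 3$ handled by parts (i), (ii) and the trivial rank-$\le 2$ observation) each $\CA^X$ is itself (inductively) factored. One then iterates, or simply observes that the class of such restrictions is closed under further restriction, to conclude hereditary (inductive) factoredness. The analogous bookkeeping for the "nice but not necessarily inductively factored" statement is identical since by Theorem \ref{thm:akl}(iv) the two notions coincide for $\ell \ge 4$.

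The main obstacle will be establishing precisely which $\CA^{k'}_p(r)$ arise as restrictions $\CA^X$ of $\CA^k_\ell(r)$ for $k \in \{\ell-2, \ell-1, \ell\}$ and verifying that the parameter $k'$ never drops out of the range $[p-2, p]$ --- i.e.\ that one never produces a restriction isomorphic to $\CA^{k'}_p(r)$ with $k' \le p - 3$, which by Theorem \ref{thm:akl}(iii) would fail to be factored. This is a finite but slightly delicate combinatorial case analysis on the flats $X \in L(\CA^k_\ell(r))$, most cleanly carried out by describing $L(\CA^k_\ell(r))$ in terms of partitions of the coordinate set together with the choice of which "zero-coordinate" hyperplanes are picked up, exactly as in \cite[\S 2]{orliksolomon:unitaryreflectiongroups}. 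Once that structure is in hand, the rest is a routine application of Theorem \ref{thm:akl} and Lemma \ref{lem:3-arr}.
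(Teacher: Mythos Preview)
Your plan is essentially the paper's: combine Theorem \ref{thm:akl} with the known restriction pattern among the $\CA^k_\ell(r)$ (together with Lemma \ref{lem:3-arr} for $\ell = 3$). The one slip is your choice of reference: \cite[Prop.\ 2.14]{orliksolomon:unitaryreflectiongroups} (cf.\ \cite[Prop.\ 6.84]{orlikterao:arrangements}) only describes restrictions of $\CA^0_\ell(r) = \CA(G(r,r,\ell))$, which is not factored for $\ell \ge 4$ and so is not among the three cases $k \in \{\ell-2,\ell-1,\ell\}$ you must handle; what you actually need --- and what the paper invokes --- is \cite[Props.\ 2.11, 2.13]{orliksolomon:unitaryreflectiongroups} (cf.\ \cite[Prop.\ 6.82]{orlikterao:arrangements}), which record the restriction to a single hyperplane for \emph{arbitrary} $\CA^k_\ell(r)$ and from which one reads off directly that $k' \in \{\ell'-2,\ell'-1,\ell'\}$ whenever $k \in \{\ell-2,\ell-1,\ell\}$, so iteration finishes the argument without the lattice case analysis you anticipate.
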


\begin{proof}
The result follows immediately from Theorem \ref{thm:akl} and the 
pattern of restrictions to hyperplanes among the $\CA^k_\ell(r)$
from \cite[Props.\ 2.11,  2.13]{orliksolomon:unitaryreflectiongroups}
(cf.~\cite[Props.~6.82]{orlikterao:arrangements}).
\end{proof}

\section{Nice Restrictions of Reflection Arrangements}
\label{sec:proof}

Throughout this section, let $W$ be an irreducible unitary reflection group 
and let $\CA = \CA(W)$ be its reflection arrangement.

We begin with a proof of Theorem \ref{thm:nice}.
It follows from Theorem \ref{thm:factoredrefl}(ii) that if 
$\CA$ is nice, then so is $\CA^X$ for any $X \in L(\CA)$.
If $W = G(r,r,\ell)$ for $\ell \ge 3$, then 
the result follows from Theorem \ref{thm:akl}.

This leaves the instances when $W$ is of exceptional type.
If $\dim X \le 2$, then 
$\CA^X$ is supersolvable, and so is nice, by 
Proposition \ref{prop:superindfactored}.
So we concentrate on those instances 
when  
$X \in L(\CA)$ with  $\dim X \ge 3$.
Using the tables
\cite[App.~C, D]{orlikterao:arrangements},
we first address each restriction $\CA^X$
for $W$ of exceptional type and $\dim X = 3$ and some
instances for $\dim X = 4$.
The failure to admit a nice partition is determined computationally.
Then each higher rank restriction can be analyzed by a suitable localization, 
making use of the fact that $L(\CA(W_Y)^X) = L(\CA(W)^X)_Y)$, 
see \cite[Lem.\ 2.11, Cor.\ 6.28]{orlikterao:arrangements}.
It then follows that $\CA(W)^X$ is not nice because the 
localization $(\CA(W)^X)_Y$ fails to be 
nice, cf.~Remark \ref{rem:factored}.
We readily find a 
suitable 3-dimensional localization which is not nice using the tables
\cite[App.\ C]{orlikterao:arrangements}.

The following result is due to Orlik and Terao, 
\cite[App.\ D]{orlikterao:arrangements}.

\begin{lemma}
\label{lem:isoms}
We have the following lattice isomorphisms of 
$3$-dimensional restrictions:
\begin{itemize}
\item [(i)] $(E_6,A_3) \cong \CA_3^2(2)$;
\item [(ii)] $(E_7, D_4) \cong \CA_3^3(2)$;
\item [(iii)] $(F_4,A_1) \cong (F_4,\tilde{A_1}) \cong (E_7,A_1^4) \cong (E_7,(A_1A_3)') \cong (E_8,A_1 D_4) \cong (E_8,D_5)$;
\item [(iv)] $(E_6,A_1 A_2) \cong (E_7,A_4)$;
\item [(v)] $(E_7,A_2^2) \cong (E_8,A_5)$;
\item [(vi)] $(G_{26},A_0) \cong (G_{32}, C(3)) \cong (G_{34}, G(3,3,3))$. 
\end{itemize}
\end{lemma}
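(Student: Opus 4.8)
The plan is to recall, following \cite[App.\ C, D]{orlikterao:arrangements}, how these isomorphisms are established. The key point is that every arrangement occurring in (i)--(vi) has rank $3$, so its intersection lattice is a geometric lattice of rank $3$, and such a lattice is determined by its atoms (the hyperplanes) together with the collection of rank-$2$ flats $Y$, each recorded by the set $\CA_Y$ of hyperplanes containing it; equivalently, $L(\CA)$ is the lattice of flats of the associated rank-$3$ simple matroid, in which any two atoms determine a unique rank-$2$ flat. Hence a lattice isomorphism between two such rank-$3$ arrangements is the same as a bijection of their hyperplane sets carrying rank-$2$ flats to rank-$2$ flats, and a first (and usually decisive) check is whether the two arrangements agree in the invariants $|\CA|$, $\exp \CA$, and the multiset $\{\, |\CA_Y| \mid Y \in L(\CA),\ r(Y) = 2 \,\}$ of line sizes.

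First I would dispose of (i) and (ii). There the right-hand sides are the intermediate arrangements of \S\ref{sec:akl}: $\CA_3^2(2)$, with exponents $\{1,3,4\}$ and hence $8$ hyperplanes, and $\CA_3^3(2) = \CA(G(2,1,3)) = \CA(B_3)$, with exponents $\{1,3,5\}$ and $9$ hyperplanes; for both, the hyperplanes and the rank-$2$ flats together with their cardinalities are immediate from the defining polynomial. The flats of type $A_3$ in $\CA(E_6)$ and of type $D_4$ in $\CA(E_7)$, and the incidence structure of $\CA(E_6)^X$ and $\CA(E_7)^X$, are listed in \cite[App.\ C]{orlikterao:arrangements}; one checks that these data match those of the intermediate arrangements above and then writes down an explicit collinearity-preserving bijection of hyperplane sets in each of the two cases. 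For (iii)--(vi) one argues in exactly the same way, reading off $|\CA^X|$, $\exp \CA^X$ and the rank-$2$ flat sizes for each listed restriction from \cite[App.\ C]{orlikterao:arrangements}, checking that these agree along each chain, and exhibiting the matching bijections; transitivity of $\cong$ then closes out the chains in (iii), (v) and (vi). In particular, via (i) and (ii), Theorem \ref{thm:akl} identifies $(E_6,A_3)$ and $(E_7,D_4)$ as nice arrangements, consistent with Theorem \ref{thm:super-restriction}(iii).

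The main obstacle is that the numerical invariants above do not in general determine a rank-$3$ matroid up to isomorphism, so each item genuinely requires exhibiting (or verifying) a bijection of hyperplane sets respecting the rank-$2$ flats, not merely a count. For the $3$-dimensional restrictions of $F_4$, $E_6$ and $E_7$ in (iii)--(v) this is a small finite verification. The heaviest case is (vi): here the ambient groups $G_{32}$ (rank $4$) and $G_{34}$ (rank $6$) are large, so to identify the rank-$3$ restrictions $\CA(G_{32})^X$ and $\CA(G_{34})^Y$ with the rank-$3$ reflection arrangement $\CA(G_{26})$ one works directly with the explicit hyperplane coordinates of these reflection arrangements (for instance by a computation in \GAP{}/\CHEVIE{}). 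All of this is carried out in \cite[App.\ C, D]{orlikterao:arrangements}, to which we refer for the details.
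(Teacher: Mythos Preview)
Your proposal is correct and aligned with the paper: the paper itself gives no proof at all for this lemma, simply attributing it to Orlik and Terao, \cite[App.\ D]{orlikterao:arrangements}, and your outline amounts to a description of the verification recorded there (matching hyperplane counts, exponents, and rank-$2$ flat structure, then exhibiting explicit collinearity-preserving bijections). In that sense you have supplied more detail than the paper does, but the approach and the ultimate appeal to \cite{orlikterao:arrangements} coincide.
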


We note that the isomorphism between $(E_8,A_1^2 A_3)$ and $(E_8,A_2 A_3)$ claimed in 
\cite[App.\ D]{orlikterao:arrangements} is not correct 
(cf.~\cite{moellerroehrle:zeta}).
We need to treat both restrictions separately.
It follows from Theorem \ref{thm:super-restriction} that the cases listed in 
Lemma \ref{lem:isoms}(i), (ii), and (v) are all supersolvable and thus are 
inductively factored, and so are nice.
Moreover, it follows from Theorem \ref{thm:factoredrefl}(i) 
and \cite[Thm.\ 1.2]{hogeroehrle:super}
that the cases listed in 
Lemma \ref{lem:isoms}(vi) are not nice.
For each of the other kinds it suffices to consider only one 
of the isomorphic restrictions. 
In our next result, we determine all 
nice restrictions of dimension at least $3$
for an ambient  irreducible, non-supersolvable  
reflection arrangement of exceptional type.

\begin{lemma}
\label{lem:dim3}
Let $\CA = \CA(W)$ be an irreducible, non-supersolvable  
reflection arrangement of exceptional type.
Let $X \in L(\CA)$ with $\dim X \ge 3$.
Then $\CA^X$ is nice if and only if 
$\CA^X$ is supersolvable or one of 
$(E_6, A_1A_2) \cong (E_7, A_4), (E_7, (A_1A_3)'')$.
\end{lemma}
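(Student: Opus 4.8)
The plan is to reduce Lemma \ref{lem:dim3} to a finite, computer-assisted verification together with a localization argument for higher rank. First I would enumerate, for each irreducible non-supersolvable exceptional reflection group $W$ (that is, $W \in \{G_{24}, G_{25}, G_{26}, G_{27}, G_{28}=F_4, G_{29}, G_{31}, G_{32}, G_{33}, G_{34}, G_{35}=E_6, G_{36}=E_7, G_{37}=E_8\}$, and similarly $H_3, H_4$ where relevant), all $W$-orbits of $X \in L(\CA)$ with $\dim X \ge 3$, using the tables in \cite[App.~C, D]{orlikterao:arrangements}. By Lemma \ref{lem:isoms} it suffices to treat one representative from each isomorphism class of restrictions, and by Theorem \ref{thm:super-restriction} the supersolvable ones $(E_6,A_3), (E_7,D_4), (E_7,A_2^2)\cong(E_8,A_5)$ are already nice, so those may be set aside.

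Next I would handle the $\dim X = 3$ cases directly. For each such $\CA^X$, either it is supersolvable (hence nice, by Proposition \ref{prop:superindfactored}), or one checks computationally — by running through all set partitions of $\CA^X$ with block sizes matching $\exp \CA^X$ (Remark \ref{rem:exponents}), and testing the independence and singleton-block conditions of Definition \ref{def:factored} against the rank-$2$ flats via Corollary \ref{cor:teraofactored}(iii) — whether a nice partition exists. The outcome, to be recorded in a table, should be that the only non-supersolvable $3$-dimensional restrictions that are nice are $(E_6, A_1A_2) \cong (E_7, A_4)$ and $(E_7,(A_1A_3)'')$; in particular $(G_{26},A_0)$ and its isomorphic partners fail, consistent with Theorem \ref{thm:factoredrefl}(i) and \cite[Thm.~1.2]{hogeroehrle:super}. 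This is the computational heart of the argument and the step I expect to be the main obstacle: the number of candidate partitions can be large, so one needs the rank-$2$ obstructions to prune aggressively, and one must be careful with the corrected isomorphism data (treating $(E_8,A_1^2A_3)$ and $(E_8,A_2A_3)$ separately, per the remark following Lemma \ref{lem:isoms}).

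Then I would dispose of the cases $\dim X \ge 4$. Here the key tool is that for $Y \le X$ in $L(\CA)$ one has $(\CA(W)^X)_Y = \CA(W_Y)^X = L(\CA(W_Y)^X)$ as a lattice (\cite[Lem.~2.11, Cor.~6.28]{orlikterao:arrangements}), so any localization of $\CA^X$ is itself a restriction of a (possibly reducible) reflection arrangement, and by Proposition \ref{prop:product-factored} and \eqref{eq:restrproduct} its niceness reduces to the irreducible factors. For each $4$-or-higher-dimensional restriction $\CA^X$ of an exceptional $W$, I would exhibit a $3$-dimensional flat $Y \le X$ such that the localization $(\CA^X)_Y$ is one of the non-nice $3$-dimensional restrictions found above (readily located using \cite[App.~C]{orlikterao:arrangements}); then $\CA^X$ is not nice by Remark \ref{rem:factored}. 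A handful of $\dim X = 4$ cases may not be covered by a clean $3$-dimensional localization and will have to be checked computationally as in the previous paragraph, which is why the lemma statement and the preceding discussion already flag ``some instances for $\dim X = 4$'' as needing direct treatment.

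Finally I would assemble these pieces: the $3$-dimensional direct computation, the $4$-dimensional mix of direct computation and $3$-dimensional localizations, and the higher-rank localization argument together show that for $X \in L(\CA)$ with $\dim X \ge 3$, $\CA^X$ is nice precisely when it is supersolvable or isomorphic to one of $(E_6,A_1A_2)\cong(E_7,A_4)$ or $(E_7,(A_1A_3)'')$. The cleanest presentation is probably to bundle the computational verdicts for $\dim X \in \{3,4\}$ into one or two tables and then give the short localization reduction for $\dim X \ge 5$ (and the uncovered higher cases) in prose, since the individual localizations are routine once the $3$-dimensional non-nice list is in hand.
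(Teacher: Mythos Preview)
Your proposal is correct and follows essentially the same approach as the paper: direct computational verification of niceness for the finitely many $3$-dimensional (and a handful of $4$-dimensional) restrictions using Definition~\ref{def:factored}, Remark~\ref{rem:exponents}, and Corollary~\ref{cor:teraofactored}(iii), combined with the localization argument via Remark~\ref{rem:factored} and \cite[Lem.~2.11, Cor.~6.28]{orlikterao:arrangements} to rule out the higher-rank cases. The paper organizes this exactly as you suggest, into three tables (rank~$3$, selected rank~$4$, and higher-rank localizations), and additionally exhibits explicit nice partitions for the two positive cases $(E_6,A_1A_2)$ and $(E_7,(A_1A_3)'')$ --- something you should include rather than leave implicit in the computational search, since these are the content of the forward direction.
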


\begin{proof}
Using our discussion above together with  
Theorems \ref{thm:factoredrefl} and \ref{thm:super-restriction} along with 
Lemma \ref{lem:isoms}, we still have to analyze the rank 3 restrictions in 
Table \ref{table:rank3restrictions} along with some rank 4 restrictions in 
Table \ref{table:rank4restrictions}.
Using the tables of all orbit types 
for the irreducible reflection groups of exceptional type in 
\cite[App.]{orliksolomon:unitaryreflectiongroups} 
(see also \cite[App.\ C]{orlikterao:arrangements})
as well as the restrictions imposed on factorizations 
in Remark \ref{rem:factored}, 
we determine which of the remaining $3$-dimensional 
restrictions admit a nice partition (Table \ref{table:rank3restrictions})
and also determine that some
4-dimensional restrictions $\CA(W)^X$ 
do not admit a nice partition 
(Table \ref{table:rank4restrictions}).

To illustrate the argument, we indicate this for 
$(E_6,A_1^3)$ which turns out to be not nice. 
The arguments for the other non-nice restrictions in 
Tables \ref{table:rank3restrictions} and 
\ref{table:rank4restrictions} are
quite similar and are left to the reader.
We emphasize that we 
have verified all instances of both tables 
with the aid of a computer by means of the 
computer algebra
system \Sage, \cite{sage}.
The program in question uses 
an intelligent brute force routine which we outline next.
As the singleton part of a partition $\pi$ of $\CA$
it chooses a representative $H$ 
among the orbits of the automorphism group of $L(\CA)$. 
The rest of $\pi$ is then constructed 
as follows. 
Consider the set of all localizations 
$\CA_X$ so that $X\subset H$ and $r(X)=2$. 
For each such $\CA_X$, 
it follows from Corollary \ref{cor:teraofactored}(iii) 
that $\CA_X\setminus \{H\}$
belongs to a single part of $\pi$.
For each such resulting partition $\pi$, the algorithm then tests 
whether the 
conditions in Definition \ref{def:factored} are fulfilled.

Let $\CB$ be the restriction $(E_6, A_1^3)$.
Then we have 
$$\CB=\left\{
\begin{array}{lll}
H_0 = \{ 2x-y-z=0\}, & H_1 = \{ 2x-y+z=0\} ,& H_2 = \{ x+y=0\}, \\
H_3 = \{ 2x+y+z=0\} ,& H_4 = \{ 2x+y-z=0\} ,& H_5 = \{ x-y=0\} ,\\
H_6 = \{ y+z=0\} ,& H_7 = \{ y-z=0\} ,& H_8 = \{ y=0\}, \\
H_9 = \{ x=0\} &&
\end{array} 
\right\}.$$
We claim that $\CB$ does not admit a nice partition.
By way of contradiction, suppose that $\pi = (\pi_1,\pi_2,\pi_3)$ 
is a factorization of $\CB$ and let 
$\exp(\CB) = \{1,4,5\} = \{|\pi_1|,|\pi_2|,|\pi_3|\}$. 
Suppose $X\in L(\CB)$ is a flat of rank 2.
Then $|X| \in \{2,3,4\}$ and one readily checks that  
\begin{itemize}
\item if $|X|=2$, then $X=\{H_i,H_j\}$ with $(i,j) \subset \{0,1,2\}$ or $(i,j) \subset \{3,4,5\}$;
\item if $|X|=3$, then $X=\{H_6,H_7,H_8\}$ or $X=\{H_i,H_j,H_k\}$ with \\ $(i,j,k) \in  \{0,1,2\} \times \{3,4,5\}  \times \{6,7,8\}$
but $(i,j,k) \not\in \{(0,3,6),(1,4,7),(2,5,8)\}$;
\item if $|X|=4$, then $X = \{H_i,H_j,H_k,H_9\}$ with $(i,j,k) \in \{(0,3,6),(1,4,7),(2,5,8)\}$.
\end{itemize}
Now consider a rank 2 flat $X$ with $|X|=4$. Removing 
one hyperplane from $X$ and adding to it a different one 
results in a subset $Z$ of $\CB$ which is not a member of $L(\CB)$.
Then there is always a rank 2 flat $Y$ in $L(\CB)$ satisfying 
$Y \subsetneq Z$. Since every hyperplane of $\CB$ is part of such a set $X$, 
it follows from Corollary \ref{cor:teraofactored} that 
there is no candidate for the singleton part $\pi_1$, 
because the remaining three hyperplanes can't be joined with a fourth 
one to build a part of $\pi$. 
Thus $\pi$ is not a nice partition of $\CB$, 
so $(E_6, A_1^3)$ is not nice.

Next we are going to explicitly determine inductive factorizations
for the two restrictions $(E_6, A_1A_2)$ and $(E_7, (A_1A_3)'')$. 

Let $\CC$ be the restriction $(E_6, A_1A_2)$. 
Then $\exp(\CC)=\{1,4,5\}$ and one checks that 
$$Q(\CC) = x(x\pm z)(x\pm y)(y\pm z)(2x\pm (y-z))(3y-z) \in \BBR[x,y,z].$$
We claim that 
$\pi = (\pi_1, \pi_2, \pi_3)$ with 
\begin{align*}
\pi_1&=\left\{ \{x=0\} \right\},\\
\pi_2&=\left\{
\begin{array}{l}
\{ 2x+y-z =0\}, \{ 2x-y-z=0\}, \\
\{ y-z=0\}, \{ 3y-z=0\} \\
\end{array}
\right\} \text{ and}\\
\pi_3 &= \left\{
\begin{array}{l}
\{ x+z=0\}, \{ x-z=0\},\{ y+z=0\}, \\
\{ x+y=0\}, \{ x-y=0\} \\
\end{array}
\right\}
\end{align*}
is an inductive factorization of $\CC$. 
The niceness of $\pi$ is 
easily verified by checking the 
conditions in Definition \ref{def:factored}.

Let $H_0 = \{3y-z=0\}$, 
then $\CC' = \CC \setminus \{H_0\}$ 
is supersolvable and
$$\{x=0\} \leq \{x=0\} \cap \{ 2x+y-z =0\} \cap\{ 2x-y-z=0\} \cap \{ y-z=0\}  \leq T_{\CC'}$$ 
is a maximal chain of modular elements in $L(\CC')$ 
that induces the factorization $\pi'$ by Proposition \ref{prop:ssfactored},
so $\pi$ indeed is an inductive factorization.

Let $\CD$ be the restriction $(E_7, (A_1A_3)'')$. Then $\exp(\CD)=\{1,5,5\}$ and one checks that
$$Q(\CD)=xyz(x\pm y)(x-z)(x-2z)(y\pm 2z)(x\pm y - 2z) \in \BBR[x,y,z].$$
We claim that $\pi = (\pi_1, \pi_2, \pi_3)$ with 
\begin{align*}
\pi_1&=\left\{ \{x-z=0\} \right\},\\
\pi_2&=\left\{
\begin{array}{l}
\{ x =0\}, \{ z=0\},\{ x-2z=0\} \\
\{ y+2z=0\}, \{ y-2z=0\} \\
\end{array}
\right\} \text{ and}\\
\pi_3 &= \left\{
\begin{array}{l}
\{ y=0\}, \{ x+y-2z=0\},\{ x-y-2z=0\}, \\
\{ x+y=0\}, \{ x-y=0\} \\
\end{array}
\right\}
\end{align*}
is an inductive factorization for $\CD$.
Again, the conditions from 
Definition \ref{def:factored} 
are easy to verify, 
so $\pi$ is a nice partition. 
With $H_0 = \{y-2z=0\}$, 
the map $$\varrho:\CD\setminus \pi_2 \rightarrow \CD'', H\mapsto H\cap H_0$$ 
is a bijection, so with Theorem \ref{thm:add-del-factored}, $(\CD',\pi')$ 
is nice as well. Now let $H_1 = \{y+2z = 0\}$, 
then the arrangement $\CD\setminus \{H_0,H_1\}$ 
is supersolvable and
$$\{x-z=0\} \leq \{x-z=0\}\cap \{ x =0\}\cap \{ z=0\} \cap \{ x-2z=0\} \leq T_{\CD \setminus \{H_0,H_1\}}$$ 
is a maximal chain of modular elements in $L(\CD \setminus \{H_0,H_1\})$
that induces the factorization $(\pi')'$ by Proposition \ref{prop:ssfactored}, 
so $\pi$ is an inductive factorization for $\CD$.

\begin{longtable}{l|l|c}  
\hline
$\CA^X$ & $\exp \CA^X$ & nice \\ 
\hline
\hline
\label{table:rank3restrictions}
$(F_4,A_1)$        &  1,5,7              &false\\
%$(F_4,\tilde A_1)$ &  1,5,7             &false\\
$(G_{29},A_1)$     &  1,9,11         &false\\
$(H_4,A_1)$        &  1,11,19         &false\\
$(G_{31},A_1)$     &  1,13,17         &false\\
%$(G_{32},C(3))$    &  1,7,13            &false\\
$(G_{33},A_1^2)$   &  1,7,9         &false\\
$(G_{33},A_2)$     &  1,6,7           &false\\
$(G_{34},A_1^3)$   &  1,13,19     &false\\
$(G_{34},A_1A_2)$  &  1,13,16     &false\\
$(G_{34},A_3)$     &  1,11,13       &false\\
%$(G_{34},G(3,3,3))$ &  1,7,13         &false\\
$(E_6,A_1^3)$      &  1,4,5              &false\\
$(E_6,A_1A_2)$     &  1,4,5          & {\bf true}\\
%$(E_6,A_3)$        &  1,3,4           & {\bf true}\\
%$(E_7,A_1^4)$      &  1,5,7          &false\\
$(E_7,A_1^2A_2)$   &  1,5,7      &false\\
%$(E_7,A_2^2)$      &  1,5,7      & {\bf true}\\
%$(E_7,(A_1A_3)')$  &  1,5,7       &false\\
$(E_7, (A_1A_3)'')$ &  1,5,5       & {\bf true}\\
%$(E_7,A_4)$        &  1,4,5         & {\bf true}\\
%$(E_7,D_4)$        &  1,3,5          & {\bf true}\\
$(E_8,A_1^3A_2)$   &  1,7,11    &false\\
$(E_8,A_1A_2^2)$   &  1,7,11      &false\\
$(E_8,A_1^2A_3)$   &  1,7,9        &false\\
$(E_8,A_2A_3)$     &  1,7,9          &false\\
$(E_8,A_1A_4)$     &  1,7,8          &false\\
%$(E_8,A_5)$        &  1,5,7           & {\bf true}\\
%$(E_8,A_1D_4)$     &  1,5,7          &false\\
%$(E_8,D_5)$        &  1,5,7            &false\\
\hline
\caption{Rank 3 restrictions of the exceptional groups}
\end{longtable}

\begin{longtable}{l|l|c}  
\hline
$\CA^X$ & $\exp \CA^X$ & nice \\ %\endfirsthead
%\endhead\toprule 
\hline
\hline
\label{table:rank4restrictions}
$(G_{33},A_1)$  &  1,7,9,11    &false\\
$(E_6,A_1^2)$      &  1,4,5,7       &false\\
$(E_6,A_2)$     &  1,4,5,5      & false\\
$(E_7,A_1A_2)$   &  1,5,7,8      &false\\
$(E_7,A_3)$   &  1,5,5,7  &false\\
$(E_8, A_2^2)$  &  1,7,11,11    &false\\
$(E_8,A_4)$  & 1,7,8,9  &false\\
$(E_8,D_4)$  & 1,5,7,11    &false\\
\hline
\caption{Some rank 4 restrictions of exceptional groups}
\end{longtable}

For the higher rank restriction, 
we utilize a suitable localization of rank $3$ from 
Table \ref{table:rank3restrictions}, of rank $4$ from 
Table \ref{table:rank4restrictions},   or else 
one which appears earlier in the same table and 
which is not nice.

\begin{longtable}{l|l}  
\hline
$\CA^X$ & non-nice localization \\ %\endfirsthead
%\endhead\toprule 
\hline
\hline
\label{table:higherrankrestrictions}
$(G_{34},A_1)$ & $(D_5,A_1) = A_4^1(2)$ \\
$(G_{34},A_1^2)$ & $(G_{33},A_1^2)$ \\
$(G_{34},A_2)$ & $(G_{33},A_2)$\\
\hline
$(E_6,A_1)$ & $(D_5,A_1) = A_4^1(2)$ \\
\hline
$(E_7,A_1)$ & $(D_5,A_1) = A_4^1(2)$ \\
$(E_7,A_1^2)$ & $(E_6,A_1^2)$ \\
$(E_7,A_2)$ & $(E_6,A_2)$\\
$(E_7,(A_1^3)')$ & $(E_6,A_1^3)$\\
$(E_7,(A_1^3)'')$ & $(E_6,A_1^3)$\\
\hline
$(E_8,A_1)$ &  $(D_5,A_1) = A_4^1(2)$ \\
$(E_8,A_1^2)$ & $(E_7,A_1^2)$\\
$(E_8,A_2)$ & $(E_7,A_2)$\\
$(E_8,A_1^3)$ & $(E_6,A_1^3)$\\
$(E_8,A_1 A_2)$ & $(E_7,A_1 A_2)$\\
$(E_8,A_3)$ & $(E_7,A_3)$\\
$(E_8,A_1^4)$ & $(E_7,A_1^4)$ \\
$(E_8,A_1^2 A_2)$ & $(E_7,A_1^2A_2)$\\
$(E_8,A_1 A_3)$ & $(E_7,(A_1A_3)')$\\
\hline
\caption{Higher rank restrictions in exceptional groups}
\end{longtable}

The result now follows from Table \ref{table:higherrankrestrictions},  
Remark \ref{rem:factored}, the data in Table \ref{table:rank3restrictions}, 
as well as Lemma \ref{lem:isoms} and our discussion above.
\end{proof}

Next we prove Theorem \ref{thm:indfac}.
It follows from Theorem \ref{thm:factoredrefl}(ii) that if 
$\CA$ is inductively factored, then so is $\CA^X$ for any $X \in L(\CA)$.
If $W = G(r,r,\ell)$ for $\ell \ge 3$, then 
the result follows from Theorem \ref{thm:akl}.

It follows from Theorem \ref{thm:indfree-restriction}
and Proposition \ref{prop:indfactoredindfree} that 
for $\dim X \ge 4$, $\CA^X$ is not inductively factored.

If $W$ is exceptional, then the result follows from 
the next lemma. Let $\CA = \CA(W)$.
Recall that if $\dim X \le 2$, then  
$\CA^X$ is supersolvable and so is inductively factored.
On the other hand, by 
Proposition \ref{prop:superindfactored}
and Theorem \ref{thm:indfree-restriction}(iii),
the restricted arrangement $\CA^X$ is 
not inductively factored, 
for $X \in L(\CA)$ with $\dim X \ge 4$.

\begin{lemma}
\label{lem:exeptional-indfac}
Let $W$ be irreducible of exceptional type with reflection 
arrangement $\CA = \CA(W)$. Let $X \in L(\CA)$.
Then $\CA^X$ is nice if and only if it is inductively factored.
\end{lemma}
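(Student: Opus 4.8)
The plan is to reduce the claim to what has already been established in Lemma~\ref{lem:dim3} together with the structural results recalled in Section~\ref{sect:prelims}. The statement to be proved is that, for $W$ irreducible of exceptional type and $X \in L(\CA)$, the restriction $\CA^X$ is nice if and only if it is inductively factored. One direction is trivial: every inductively factored arrangement is factored (it is even inductively free, by Proposition~\ref{prop:indfactoredindfree}), so it remains to show that if $\CA^X$ is nice, then it is inductively factored.

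First I would dispose of the cases by $\dim X$. If $\dim X \le 2$, then $\CA^X$ is supersolvable, hence inductively factored by Proposition~\ref{prop:superindfactored}, and in particular nice; so the equivalence holds trivially here. If $\dim X \ge 4$, then by Theorem~\ref{thm:indfree-restriction} together with Proposition~\ref{prop:indfactoredindfree}, the only possibly inductively free (hence the only possibly inductively factored) restrictions of rank $\ge 4$ with $\CA$ \emph{not} inductively free occur as in Theorem~\ref{thm:indfree-restriction}(iii), and those are themselves not inductively factored by the remark following Theorem~\ref{thm:indfree-restriction}; meanwhile if $\CA = \CA(W)$ is itself inductively factored (equivalently supersolvable, by Theorem~\ref{thm:indfactoredrefl}(i)), then so is every $\CA^X$. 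So it suffices to check that no $\CA^X$ with $\dim X \ge 4$ and $\CA(W)$ non-supersolvable is nice; this follows from Lemma~\ref{lem:dim3}, whose list of nice restrictions of rank $\ge 3$ contains only rank-$3$ arrangements. Hence for $\dim X \ge 4$ the arrangement $\CA^X$ is neither nice nor inductively factored, and the equivalence holds vacuously.

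The remaining and only substantive case is $\dim X = 3$. Here I would invoke Lemma~\ref{lem:dim3}: if $\CA^X$ is nice and of rank $3$, then either $\CA^X$ is supersolvable, or $\CA^X$ is one of $(E_6, A_1A_2) \cong (E_7, A_4)$ or $(E_7, (A_1A_3)'')$. In the supersolvable case, $\CA^X$ is inductively factored by Proposition~\ref{prop:superindfactored}. In the two exceptional cases, explicit inductive factorizations were already exhibited in the proof of Lemma~\ref{lem:dim3} (the partitions $\pi$ of $\CC = (E_6,A_1A_2)$ and of $\CD = (E_7,(A_1A_3)'')$, each shown to be an inductive factorization via Theorem~\ref{thm:add-del-factored} and Proposition~\ref{prop:ssfactored} after deleting one or two hyperplanes to land in a supersolvable arrangement). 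Therefore every nice rank-$3$ restriction $\CA^X$ is inductively factored.

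Putting the three ranges of $\dim X$ together gives the equivalence for all $X \in L(\CA)$, completing the proof. I do not expect a genuine obstacle here: all the hard work — the computer-assisted elimination of the non-nice rank-$3$ and rank-$4$ restrictions, and the construction of the two exceptional inductive factorizations — is already contained in Lemma~\ref{lem:dim3} and the tables preceding it. The only point requiring a moment's care is the bookkeeping for $\dim X \ge 4$, where one must combine Theorem~\ref{thm:indfree-restriction}, the remark after it, and the fact that Lemma~\ref{lem:dim3} produces no nice restriction of rank $\ge 4$; but this is routine once the pieces are lined up.
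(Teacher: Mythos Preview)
Your proposal is correct and follows essentially the same approach as the paper: reduce to Lemma~\ref{lem:dim3}, observe that the only non-supersolvable nice restrictions are the two rank-$3$ arrangements $(E_6,A_1A_2)\cong(E_7,A_4)$ and $(E_7,(A_1A_3)'')$, and recall that explicit inductive factorizations for these were already built in the proof of Lemma~\ref{lem:dim3}. The paper's own proof is just two sentences to this effect; your additional case analysis on $\dim X$ (and the detour through Theorem~\ref{thm:indfree-restriction} for $\dim X\ge 4$) is more elaborate than necessary, since Lemma~\ref{lem:dim3} already covers all $\dim X\ge 3$ and supersolvable restrictions are inductively factored by Proposition~\ref{prop:superindfactored} regardless of rank.
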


\begin{proof}
It follows from Lemma \ref{lem:dim3} that up to isomorphism
there are only two non-supersolvable, nice restrictions.
One checks that the calculated nice partitions
are inductively factored. 
\end{proof}

Finally, Theorem \ref{thm:hered-indfac}
follows from Theorems \ref{thm:indfac}, \ref{thm:akl},
along with \cite[Prop.\ 6.82]{orlikterao:arrangements}
and Lemma \ref{lem:3-arr}.

%%%%%%%%%%%%%%%%%%%%%%%%%%%%%%%%%%%%%%%%%%%%%%%%%%%%%%%%%%%%%%%%%%%%%%
%%%%%%%%%%%%% Acknowledgments
%%%%%%%%%%%%%%%%%%%%%%%%%%%%%%%%%%%%%%%%%%%%%%%%%%%%%%%%%%%%%%%%%%%%%%
\medskip
{\bf Acknowledgments}:
We acknowledge 
support from the DFG-priority program 
SPP1489 ``Algorithmic and Experimental Methods in
Algebra, Geometry, and Number Theory''.

%%%%%%%%%%%%%%%%%%%%%%%%%%%%%%%%%%%%%%%%%%%%%%%%%%%%%%%%%%%%%%%%%%%%%%
%%%%%%%%%%%%% bibliography
%%%%%%%%%%%%%%%%%%%%%%%%%%%%%%%%%%%%%%%%%%%%%%%%%%%%%%%%%%%%%%%%%%%%%%

\bigskip

\bibliographystyle{amsalpha}

\newcommand{\etalchar}[1]{$^{#1}$}
\providecommand{\bysame}{\leavevmode\hbox to3em{\hrulefill}\thinspace}
\providecommand{\MR}{\relax\ifhmode\unskip\space\fi MR }
% \MRhref is called by the amsart/book/proc definition of \MR.
\providecommand{\MRhref}[2]{%
  \href{http://www.ams.org/mathscinet-getitem?mr=#1}{#2} }
\providecommand{\href}[2]{#2}

%%%%%%%%%%%%%%%%%%%%%%%%%%%%%%%%%%%%%%%%%%%%%%%%%%%%%%%%%%%%%%%%%%%%%%
%%%%%%%%%%%%%%%%%%%%%%%%%%%%%%%%%%%%%%%%%%%%%%%%%%%%%%%%%%%%%%%%%%%%%%

\end{document}